\documentclass[11pt]{amsart}
\numberwithin{equation}{section}
\usepackage[english]{babel}
\usepackage[T1]{fontenc}
\usepackage[latin1]{inputenc}
\usepackage{indentfirst}
\usepackage{enumitem}
\usepackage{amsmath,amssymb, amsbsy}
\usepackage{comment}
\usepackage{amsfonts}
\usepackage{hyperref}
\usepackage{cleveref}
\usepackage{esint}
\usepackage{latexsym}
\usepackage{amsthm}
\usepackage[dvips]{graphicx}
\usepackage{xcolor}
\usepackage{tikz}
\usepackage{pgfplots}
\usetikzlibrary{intersections}
\usepackage{tikz-3dplot}
\usepackage[outline]{contour}
\DeclareGraphicsExtensions{.pdf,.png,.jpg,.eps}

\usepackage{amsmath}

\usepackage{bbm}

\usepackage[font=small,labelfont=bf]{caption}

\usepackage{amsthm}

\usepackage{hyperref}

\usepackage{xcolor}
\usepackage[latin1]{inputenc}
\usepackage[active]{srcltx}
\definecolor{Arancio}{cmyk}{0,0.61,0.87,0}

\definecolor{blus}{RGB}{0,102,204}

\usepackage{doi}

\newcommand{\brd}[1]{\mathbb{#1}}
\newcommand{\R}{\brd{R}}

\newcommand{\be}{\begin{equation}}
\newcommand{\ee}{\end{equation}}

\newtheorem{teo}{Theorem}[section]

\newtheorem{Lemma}[teo]{Lemma}
\newtheorem{Theorem}[teo]{Theorem}
\newtheorem{Proposition}[teo]{Proposition}
\theoremstyle{definition}

\newtheorem{remark}[teo]{Remark}

\renewcommand{\div}{\operatorname{div}}

\newcommand{\loc}{{\rm loc}}

\usepackage{amsmath}
\usepackage{tikz}
\usepackage{mathdots}
\usepackage{yhmath}
\usepackage{cancel}
\usepackage{color}
\usepackage{siunitx}
\usepackage{array}
\usepackage{multirow}
\usepackage{amssymb}
\usepackage{textcomp}
\usepackage{gensymb}
\usepackage{tabularx}
\usepackage{extarrows}
\usepackage{booktabs}
\usetikzlibrary{fadings}
\usetikzlibrary{patterns}
\usetikzlibrary{shadows.blur}
\usetikzlibrary{shapes}

\pgfplotsset{compat=1.18} 
\begin{document}

\subjclass[2020]{35B65, 35J15, 35B45, 35B05, 35B53}

\keywords{Schauder estimates, ratios of elliptic solutions, nodal sets, higher-order boundary Harnack principle, polynomial approximation}

\title[Schauder estimates for ratios of elliptic solutions under zero set inclusion]{Schauder estimates for ratios of elliptic solutions\\ under zero set inclusion}

\author{Stefano Vita}

\address{Stefano Vita, Dipartimento di Matematica "F. Casorati", Universit\`a di Pavia, Via Ferrata 5, 27100, Pavia, Italy}
\email{stefano.vita@unipv.it}

\begin{abstract}
Let $u,v$ be two nontrivial functions that solve
$$
\mathrm{div}(A\nabla u)=\mathrm{div}(A\nabla v)=0\qquad\mathrm{in \ }B_r\subset\mathbb R^n,\quad n\geq2,\quad r>0,
$$
with $A$ symmetric, uniformly elliptic and Lipschitz continuous, and assume the zero set inclusion $Z(u)\subseteq Z(v)$. We prove that the ratio $w=v/u$ continuously extends, together with its gradient, across $Z(u)=\{u=0\}$. In particular, $w\in C^{1,\alpha}_\loc(B_{r})$ for any $\alpha\in(0,1)$. Furthermore, given any integer $k\geq2$ and $\alpha\in(0,1)$, if $A\in C^{k-1,\alpha}(B_r)$, then $w\in C^{k,\alpha}_\loc(B_{r})$. The estimates are uniform in classes of coefficients $A$ with prescribed ellipticity and regularity bounds, and in normalized classes of denominators $u$ with uniformly bounded doubling constant.
\end{abstract}

\maketitle


\section{Introduction}

The \emph{boundary Harnack principle} is a comparison principle for solutions of elliptic equations vanishing on a common portion of the boundary. In its classical form, if $u$ and $v$ are positive harmonic functions in a domain $\Omega$, vanish continuously on a portion of $\partial\Omega$, and the geometry of $\Omega$ is sufficiently regular, then the quotient $v/u$ remains controlled up to the boundary. This holds true in Lipschitz domains \cite{Kem72,Dah77,Anc78}, non-tangentially accessible domains \cite{JerKen82}, H\"older domains \cite{BanBasBur91}, and twisted H\"older domains of order greater than $1/2$ \cite{BasBur91}; see also \cite{DesSav20} for a more recent analytic approach.

When the boundary is more regular, one can ask for a correspondingly finer description of the quotient. This leads to the \emph{higher order boundary Harnack principle}. De Silva and Savin \cite{DesSav15} proved that, if $u$ is a positive harmonic function and $v$ is harmonic, both vanishing on a portion of the boundary of a $C^{k,\alpha}$ domain with $k\geq1$, then $v/u\in C^{k,\alpha}$ up to that portion of the boundary. Their proof is based on the approximation of $v$ by functions of the form $\pi u$, where $\pi$ is a polynomial, together with an improvement-of-flatness argument, see also \cite{TerTorVit24} for an approach through degenerate elliptic equations, \cite{JeoVit24,DonJeoVit24} for Dini-type boundary regularity and \cite{DesSav16} for slit domains.

Then, in \cite{LogMal15,LogMal16}, Logunov and Malinnikova started studying regularity properties of the ratio $v/u$ of two harmonic functions sharing their zero sets $Z(u)=Z(v)$, see also \cite{Man14}. They prove that $v/u$ is real analytic. This can be regarded as a higher order boundary Harnack principle across the zero set of the denominator $Z(u)=\{u=0\}$. Their approach strongly involves the real analyticity of $u,v$, polynomial expansions, and local division results for polynomials. In \cite{LinLin22,TerTorVit26a,TerTorVit26b,Fio26}, this study was further explored by considering the ratio of solutions to uniformly elliptic equations with variable coefficients, under the zero set inclusion $Z(u)\subseteq Z(v)$. The presence of variable coefficients does not enable, in general, the use of real analyticity techniques.

Let us consider the problem 
\begin{equation}\label{eq:class1}
L_Au=L_Av=0,\qquad Z(u)\subseteq Z(v)\qquad\mathrm{in \ }B_2\subset\R^n, \quad n\geq2,
\end{equation}
where $L_A\cdot=\mathrm{div}(A\nabla\cdot)$. Here $u,v\in H^1(B_2)$ are nontrivial real-valued weak solutions. The variable coefficients matrix is symmetric and uniformly elliptic with ellipticity constants $0<\lambda\leq\Lambda$; that is,
\begin{equation}\label{eq:class2}
A=(a_{ij})_{i,j=1,\dots,n},\qquad A=A^T,\qquad \lambda |\xi|^2\le A(x)\xi\cdot\xi\le\Lambda |\xi|^2 \qquad\forall \, x\in B_2, \quad \forall \, \xi\in\R^n.
\end{equation}

The condition $Z(u)\subseteq Z(v)$ is the geometric constraint at the core of the problem. We assume that $A$ is Lipschitz continuous in $B_2$. In this regularity class, nontrivial solutions satisfy local quantitative unique continuation estimates and, in particular, local doubling inequalities, see \cite{GarLin86}. Throughout the paper, we regard the doubling constant $D\geq 2^n$ as an additional quantitative parameter and we assume that
\begin{equation}\label{eq:doubling}
 \int_{B_{2r}(x_0)}u^2 \le D\int_{B_r(x_0)}u^2, \qquad \forall \, x_0\in B_{3/2},\quad \forall \, 0<r\le 1/4.
\end{equation}

In \cite{LinLin22}, the authors prove H\"older continuity of the ratio
with an implicit exponent $0<\alpha^*<1$ depending on the dimension, the ellipticity, coefficient regularity bounds and a frequency bound for the denominator (the latter is equivalent to a uniform control on the doubling constant). Their argument establishes corkscrew conditions and modified Harnack
chains for nodal domains, in the spirit of the theory of Jerison and Kenig for NTA domains \cite{JerKen82}.

The works \cite{TerTorVit26a,TerTorVit26b} develop another approach, based on equations whose ellipticity degenerates on $Z(u)$. In this formulation the quotient $w=v/u$ is treated as a weighted weak solution, in $H^1_{\loc}(B_2,u^2dx)$, of
\begin{equation}\label{eq:ratioequation}
\mathrm{div}(u^2A\nabla w)=0\qquad\mathrm{in \ }B_2.
\end{equation}
This approach helped to improve the $C^{0,\alpha^*}$-result in dimension $n=2$, providing optimal $C^{1,\alpha}$-regularity of the ratio for any $0<\alpha<1$.

In this paper, we establish quantitative Schauder estimates for the ratio $w=v/u$ across the entire nodal set of the denominator, including its singular points, in every dimension $n\ge2$.

Our first result concerns H\"older continuity of $w$.

\begin{Theorem}\label{thm:holder}
Assume \eqref{eq:class1}, \eqref{eq:class2}, \eqref{eq:doubling} and
$[A]_{C^{0,1}(B_2)}\le L$. Then $w=v/u$ has a unique continuous extension to $B_1$. Moreover, for every $0<\alpha<1$ there exists a constant $C>0$ depending only on $n,\lambda,\Lambda,L,D,\alpha$ such that
\begin{equation}\label{eq:holder-conclusion}
 \|w\|_{C^{0,\alpha}(B_1)}\le C\frac{\|v\|_{L^2(B_2)}}{\|u\|_{L^2(B_1)}}.
\end{equation}
\end{Theorem}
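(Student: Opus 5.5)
The plan is a De Silva--Savin type polynomial approximation scheme. At every point $x_0\in Z(u)\cap B_1$ we approximate $v$ by $c\,u$, with $c$ a constant, at all dyadic scales. Since $Z(u)$ has empty interior and $w$ is locally smooth (indeed $C^{1,\alpha}$ by standard theory for $\div(u^2A\nabla w)=0$ where $u\neq0$) off $Z(u)$, it suffices to establish two things. First, a pointwise Campanato-type decay at points of $Z(u)$:
\[
\|v-c_{x_0}u\|_{L^2(B_r(x_0))}\le C r^{\alpha}\|u\|_{L^2(B_r(x_0))}\frac{\|v\|_{L^2(B_2)}}{\|u\|_{L^2(B_1)}} .
\]
Second, a conversion of this decay, via interior estimates in nodal regions, into the Hölder bound \eqref{eq:holder-conclusion}. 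The normalization is by linearity: we assume $\|u\|_{L^2(B_1)}=1$ and $\|v\|_{L^2(B_2)}\le 1$. The doubling inequality \eqref{eq:doubling} then gives $\|u\|_{L^2(B_r(x_0))}\gtrsim r^{C(D)}$ and bounds $\|v\|$ on comparable balls.

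\textbf{Step 1: compactness and the Liouville input.} Consider the class $\mathcal S$ of pairs $(u,v,A)$ satisfying \eqref{eq:class1}, \eqref{eq:class2} and \eqref{eq:doubling}, with $[A]_{C^{0,1}}\le L$ and $u,v$ normalized. This class is compact under blow-ups. Rescale around $x_0$ by $u_r(x)=u(x_0+rx)/\|u\|_{L^2(B_r(x_0))}$, and similarly for $v$, using the same normalizing factor. The doubling bound gives uniform $H^1$ and $C^{0,\beta}$ control. Moreover $A(x_0+r\cdot)$ converges to a constant matrix, and the zero set inclusion passes to the limit, since $u_r\to u_\infty$ uniformly with $Z(u_\infty)\subseteq Z(v_\infty)$ by Hausdorff convergence of nodal sets together with nondegeneracy from doubling. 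The key one-step improvement is then the following: for any $\alpha<1$ there is $\rho\in(0,1/4)$ such that if $\|v-c u\|_{L^2(B_1)}\le\varepsilon$, then there exists $c'$ with $|c'-c|\le C\varepsilon$ and $\|v-c'u\|_{L^2(B_\rho)}\le \rho^{\alpha}\varepsilon\|u\|_{L^2(B_\rho)}$, uniformly over $\mathcal S$ at small scales.

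To prove the improvement, argue by contradiction. Normalized differences $(v-cu)/\varepsilon$ converge to some $v_\infty$, which solves a constant coefficient equation with $Z(u_\infty)\subseteq Z(v_\infty)$. By the Logunov--Malinnikova theory (after a linear change of variables making the operator harmonic), $v_\infty/u_\infty$ is real analytic. Its value at $0$ supplies the constant correction, and the analytic remainder is $O(\rho)\,|u_\infty|$, which is much smaller than $\rho^\alpha$. The doubling constant ensures $\|u_\infty\|_{L^2(B_\rho)}\gtrsim\rho^{C}$, so the decay is measured in the correct relative norm.

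\textbf{Step 2: iteration and conclusion.} Iterating Step 1 from scale $1/4$ down gives constants $c_k$ forming a Cauchy sequence with $|c_{k+1}-c_k|\le C\rho^{k\alpha}$. Their limit $c_{x_0}$ is the continuous extension $w(x_0)$, and the Campanato decay above holds. For $y\notin Z(u)$, let $d=\mathrm{dist}(y,Z(u))$ and choose $x_0\in Z(u)$ with $|y-x_0|=d$. On $B_{d/2}(y)$ the function $u$ has a sign. By Harnack, $|u|\ge c\,\sup_{B_{d/2}(y)}|u|$ on $B_{d/4}(y)$, and doubling controls $\sup_{B_{d/2}(y)}|u|$ from below by $d^{-n/2}\|u\|_{L^2(B_{2d}(x_0))}$ up to constants. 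This lower bound near the nodal set is the delicate point: it requires a quantitative Harnack chain, or equivalently an interior corkscrew property of nodal domains in terms of $D$. Given it, $|w(y)-w(x_0)|\le C d^\alpha$, and interior Schauder theory for $L_A(v-c_{x_0}u)=0$ divided by $u$ gives $d\,|\nabla w|\le C d^\alpha$ on $B_{d/4}(y)$. Standard case analysis ($|y-y'|$ compared with the distances to $Z(u)$) then yields the global $C^{0,\alpha}$ bound.

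The main obstacle is the blow-up in Step 1, in particular near singular points of $Z(u)$. There one must show that zero set inclusion survives the limit, and that errors in $u$ coming from the variable coefficients, of size $O(r)$ after rescaling, do not spoil the relative bound against $\|u\|_{L^2(B_r)}$. The latter is only polynomially small, so the uniformity in $D$ is essential. The first thing to try is to handle the inclusion by showing that on each limiting nodal domain, $v_r/u_r$ is uniformly bounded locally, via the boundary Harnack of \cite{LinLin22}. This gives $|v_\infty|\le C|u_\infty|$, which forces the inclusion.
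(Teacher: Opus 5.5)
\textbf{There is a genuine gap in Step~1, and it is the heart of the matter.}

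In a compactness proof of a one-step improvement, the radius $\rho$ must be fixed \emph{before} the contradicting sequence is extracted. So the limit has to supply a bound that is uniform over all possible limit pairs. Concretely, consider any $L_{A_0}$-harmonic pair in $B_1$ with $\|u_\infty\|_{L^2(B_1)}=1$, doubling constant $D$, $\|v_\infty\|_{L^2(B_1)}\le 1$ and $Z(u_\infty)\subseteq Z(v_\infty)$, and set $f_\infty=v_\infty/u_\infty$. For every such pair you need $|f_\infty(0)|\le C$ (to get $|c'-c|\le C\varepsilon$) and $|f_\infty(x)-f_\infty(0)|\le C|x|$ on $B_\rho$, with $C=C(n,\lambda,\Lambda,D)$. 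Only then can you choose $\rho$ with $C\rho\le\frac12\rho^\alpha$.

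Real analyticity of $f_\infty$ is purely qualitative. It gives a constant for each individual limit, but the limit depends on the sequence, which depends on $\rho$. Nothing prevents its Lipschitz constant at $0$ from exceeding $\rho^{\alpha-1}$. The quantitative Harnack and gradient bounds of Logunov--Malinnikova do not help either, because their constants depend on the zero set itself, not on $n$ and $D$ alone. A bound of this kind, uniform over the doubling class, is essentially the constant-coefficient case of the estimate you are trying to prove, so the argument is circular. A second fixed-scale compactness argument will not recover it: smooth convergence of $(u_k,v_k)$ gives no control of $v_k/u_k$ near the nodal set without the very estimate in question.

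\textbf{How the paper closes this.} It never needs quantitative information about a limiting ratio, because it does not iterate a fixed-scale improvement.
\begin{enumerate}
\item It picks centers $x_k$ and scales $r_k\in[\rho_k,r_0]$, with $\rho_k\to0$, that almost maximize $\mathcal E_0(x,r)/r^\alpha$. This gives $\mathcal E_0(x_k,s)\le 2M_ks^\alpha$ at every larger scale $s$.
\item Chaining the best constants across dyadic scales gives $\|\tilde v_k\|_{L^2(B_R)}\le CR^\alpha\|\tilde u_k\|_{L^2(B_R)}$ for every $R$. Hence the blow-up limits are \emph{entire}, with controlled relative polynomial growth.
\item After the change of variables $x=A_0^{1/2}y$, Liouville makes the limits polynomials $Q,P$ with $\deg P\le\deg Q$.
\item Murdoch's division lemma forces $P=cQ$.
\item The orthogonality $\int_{B_1}\tilde u_k\tilde v_k=0$, which is the normal equation of the best constant, gives $c=0$. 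This contradicts $\|V\|_{L^2(B_1)}=1$.
\end{enumerate}
Rigidity of entire solutions is exactly what replaces your missing uniform estimate.

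\textbf{Two smaller points.}
\begin{itemize}
\item Transferring the nodal inclusion needs no boundary Harnack from Lin--Lin. A zero of a nontrivial solution is a sign-change point, hence a limit of zeros of the approximants, where $\tilde v_k$ vanishes.
\item The lower bound you call delicate in Step~2 needs no Harnack chains or corkscrews. Since $u$ has a sign in $B_d(y)$, Harnack there plus doubling centered at $y$ gives $|u|\gtrsim d^{-n/2}\|u\|_{L^2(B_{2d}(x_0))}$ on $B_{d/2}(y)$. The paper avoids Step~2 altogether: it proves the excess decay at every center of $B_1$, not only on $Z(u)$, and compares $\mu$-averages at nearby centers using only doubling.
\end{itemize}
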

This first result already improves, for general Lipschitz coefficients, the H\"older continuity result in \cite{LinLin22}, which comes with the implicit small exponent $\alpha^*$. It also removes the a priori H\"older regularity assumption in \cite{TerTorVit26a}. Then, one can actually prove more.

\begin{Theorem}\label{thm:lip}
Assume \eqref{eq:class1}, \eqref{eq:class2}, \eqref{eq:doubling} and
$[A]_{C^{0,1}(B_2)}\le L$. Then, for every $0<\alpha<1$ there exists a constant $C>0$ depending only on $n,\lambda,\Lambda,L,D,\alpha$ such that
\begin{equation}\label{eq:lip-conclusion}
 \|w\|_{C^{1,\alpha}(B_1)}\le C\frac{\|v\|_{L^2(B_2)}}{\|u\|_{L^2(B_1)}}.
\end{equation}
\end{Theorem}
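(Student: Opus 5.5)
The approach is a De Silva--Savin type polynomial approximation at points of $Z(u)$. We approximate $v$ by products $\pi u$ with $\pi$ affine, using the continuity of $w$ from Theorem~\ref{thm:holder} as the starting point.

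By normalization we may assume $\|u\|_{L^2(B_1)}=1$ and $\|v\|_{L^2(B_2)}\le 1$. Away from $Z(u)$, $w$ solves the uniformly elliptic equation \eqref{eq:ratioequation} with $C^{0,1}$ coefficients $u^2A$. Interior $C^{1,\alpha}$ bounds there are classical, but they degenerate as $u\to0$. The whole task is therefore a pointwise $C^{1,\alpha}$ expansion at every $x_0\in Z(u)\cap B_1$: we must find an affine $\ell_{x_0}$ with
\[
\|v-\ell_{x_0}u\|_{L^2(B_r(x_0))}\le C r^{1+\alpha}\|u\|_{L^2(B_r(x_0))}\qquad 0<r\le r_0 .
\]
This is then combined with the interior estimates at scale $\mathrm{dist}(x,Z(u))$, using the standard Campanato-type gluing.

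\textbf{Step 1: improvement of flatness.} The key lemma will state the following. Suppose $\|v-\ell u\|_{L^2(B_1)}\le\varepsilon\|u\|_{L^2(B_1)}$, $Z(u)\subseteq Z(v)$, $0\in Z(u)$, and $[A]_{C^{0,1}}\le\delta$ is small. Then for a fixed $\rho$ (depending on $D,\alpha$) there is a new affine $\ell'$ with $|\ell-\ell'|\le C\varepsilon$ and
\[
\|v-\ell' u\|_{L^2(B_\rho)}\le \rho^{1+\alpha}\varepsilon\|u\|_{L^2(B_\rho)}.
\]
We prove this by compactness and contradiction. Take sequences $A_k\to A_\infty$ and $u_k\to u_\infty$; the limit $u_\infty$ is nontrivial thanks to the uniform doubling \eqref{eq:doubling}. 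Set $\tilde w_k=(v_k-\ell_k u_k)/(\varepsilon_k u_k)$. Theorem~\ref{thm:holder}, applied to the pair $(u_k,v_k-\ell_k u_k)$, gives uniform Hölder bounds. Note that $v-\ell u$ does not solve the equation: $L_A(\ell u)=2A\nabla\ell\cdot\nabla u+u\,\mathrm{div}(A\nabla \ell)$, and the second term is $O(\delta)$ because $\nabla\ell$ is constant. So Theorem~\ref{thm:holder} must be used in a version allowing a right-hand side of the form $\mathrm{div}(u\,F)+u\,f$, or via a corrector argument; the first-order term $2A\nabla\ell\cdot\nabla u$ is handled by writing it as a divergence plus $u\,\mathrm{div}(A\nabla\ell)$. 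In the limit, $\tilde w_\infty$ solves $\mathrm{div}(u_\infty^2A_\infty\nabla\tilde w_\infty)=0$. After an affine change of variables we may take $A_\infty(0)=I$ and treat $A_\infty$ as constant. Shrinking scales so that $A_k$ freezes, $u_\infty$ is then harmonic.

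\textbf{Step 2: regularity of the limit problem.} For $u$ harmonic with bounded doubling, the weighted solution $\tilde w_\infty$ has the form $h/u_\infty$ with $h$ harmonic and $Z(u_\infty)\subseteq Z(h)$. In the harmonic case $h/u_\infty$ is real analytic, by Logunov--Malinnikova in the equal-zero-set case; the inclusion case follows by division of power series. The available estimates, however, are not uniform in the normalized class with doubling bound $D$. I would obtain them quantitatively by a second compactness argument: the class of harmonic $u$ with doubling $\le D$ is compact, and analyticity of the ratio holds at each element. The step I expect to be the main obstacle is obtaining uniform $C^{1,\alpha}$ (indeed $C^{2}$) bounds for $h/u$ from this compactness; it requires that limits of admissible pairs remain admissible, including preservation of $Z(u)\subseteq Z(h)$ under locally uniform convergence, which fails naively. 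I would try to preserve it through the equation \eqref{eq:ratioequation} and the uniform Hölder bounds of Theorem~\ref{thm:holder}. This is what contradicts the assumed failure of improvement.

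\textbf{Step 3: iteration and gluing.} Iterating Step~1 along scales $\rho^k$ gives affine $\ell_k\to\ell_{x_0}$ geometrically, with the pointwise expansion above. The doubling condition converts $L^2$ closeness into $L^\infty$ control of $w-\ell_{x_0}$ on the set where $|u|$ is comparable to its sup. For $x\notin Z(u)$, interior Schauder estimates for \eqref{eq:ratioequation} on $B_{d/2}(x)$, with $d=\mathrm{dist}(x,Z(u))$, control $\nabla w(x)-\nabla\ell_{x_0}$ at the nearest $x_0$. This is possible since the Harnack-type lower bound on $|u|$ in that ball holds up to doubling constants. The result is $\|w\|_{C^{1,\alpha}(B_1)}\le C$, which is \eqref{eq:lip-conclusion}.
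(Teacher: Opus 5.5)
There is a genuine gap, and it lies exactly where the paper's new ideas are needed.

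\textbf{Step 1: the compactness fails.} Theorem~\ref{thm:holder} cannot be applied to $(u_k,v_k-\ell_ku_k)$, and the defect is not perturbative. In $L_A(\ell u)=2A\nabla\ell\cdot\nabla u+u\,\div(A\nabla\ell)$ the first-order term has size $|\nabla\ell|$, not $\varepsilon$. Rewriting it as $\div(uF)+uf$ still leaves $|F|\sim|\nabla\ell|$. Along your iteration, at scale $r=\rho^j$ the rescaled gradient is $r\nabla\ell_j$ while the flatness is $r^{1+\alpha}\varepsilon_0$. So the normalized gradient grows like $r^{-\alpha}$, and only its product $r^{1-\alpha}$ with the rescaled Lipschitz constant is small.

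Consequently you get neither uniform H\"older bounds for $\tilde w_k$ nor the limit equation $\div(u_\infty^2A_\infty\nabla\tilde w_\infty)=0$. The latter fails even when $L_A(\ell u)=O(\varepsilon)$. Take, in $\R^2$,
\[
u=x_1,\qquad v=x_1x_2+\varepsilon(x_1^3-3x_1x_2^2),\qquad \ell=x_2+\varepsilon\beta x_1 .
\]
Then $\tilde w=x_1^2-3x_2^2-\beta x_1$ and $\div(x_1^2\nabla\tilde w)=-2\beta x_1$.

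The paper never needs an equation for the normalized ratio. It writes $\tilde v_k=v_k^*-(b_k\cdot x)\tilde u_k$, where $v_k^*$ and $\tilde u_k$ are exact solutions. Both are replaced by frozen-coefficient $L_0$-harmonic functions, with $C^{1,\eta}$ errors $O(r_k(1+|b_k|))$ and $O(r_k)$ respectively. Hence $\tilde v_k$ is uniformly close to the $L_0$-biharmonic function $h^v_k-(b_k\cdot x)h^u_k$. In this step the H\"older estimate serves only to ensure $r_k|b_k|\to0$. This yields compactness with a biharmonic, not harmonic, limiting numerator.

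\textbf{Step 2: the needed uniform estimate is the theorem itself.} Step 2 asks for $C^{1,\alpha'}$ bounds for $h/u$ on a bounded ball, uniform over harmonic pairs with doubling at most $D$. That is the constant-coefficient case of the theorem. Logunov--Malinnikova's constants depend on $Z(u)$, not only on $D$. Compactness cannot convert qualitative analyticity of each limit into uniform bounds along a sequence. That would require continuity of $(u,h)\mapsto h/u$ in $C^{1,\alpha'}$, which is what is to be proved.

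The missing idea is to blow up at centres and scales where $\mathcal E_1(x_0,r)/r^{1+\alpha}$ is almost maximal, so the excess at all larger scales is controlled. This gives, for every $R$,
\[
\|\tilde v_k\|_{L^2(B_R)}\le CR^{1+\alpha}\|\tilde u_k\|_{L^2(B_R)},\qquad \int_{B_R}\tilde u_k^2\le DR^{\log_2D}.
\]
The limits are then entire with polynomial growth. By Theorem~\ref{thm:app-liouville} they are polynomials: $Q$ harmonic and $P$ biharmonic with $\deg P\le\deg Q+1$. Murdoch's Lemma~\ref{lem:murdoch} gives $P=QR$ with $R$ affine. This contradicts the orthogonality $V\perp U\mathcal P_1$ inherited from the best weighted fits. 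No regularity theory for a limiting degenerate equation is used.

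Your concern about preserving the nodal inclusion is unfounded. A nontrivial $L_{A_0}$-harmonic $U$ changes sign near each of its zeros, so each zero of $U$ is a limit of zeros of $\tilde u_k$, hence of zeros of $\tilde v_k$.

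Your Step 3 gluing via Harnack at scale $\mathrm{dist}(x,Z(u))$ is sound in principle. The paper avoids it by proving excess decay at every centre, not only on $Z(u)$.
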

This second result, in case of general Lipschitz coefficients, extends the previously known result in \cite{TerTorVit26b} to any dimension $n\geq2$.

Finally, as one considers more regular coefficients, a complete Schauder theory for the ratio can be proved.

\begin{Theorem}\label{thm:higher}
Assume \eqref{eq:class1}, \eqref{eq:class2}, \eqref{eq:doubling}, $k\in\mathbb N_{\geq2}$, $0<\alpha<1$, and $\|A\|_{C^{k-1,\alpha}(B_2)}\le L$.
Then there exists a constant $C>0$ depending only on $n,\lambda,\Lambda,L,D,k,\alpha$ such that
\begin{equation}\label{eq:higher-conclusion}
\|w\|_{C^{k,\alpha}(B_1)}\le C\frac{\|v\|_{L^2(B_2)}}{\|u\|_{L^2(B_1)}}.
\end{equation}
\end{Theorem}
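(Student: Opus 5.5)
The plan is a De Silva--Savin type polynomial approximation, but run across the nodal set. We approximate $v$ by $\pi u$, with $\pi$ a polynomial of degree $k$, at every point $x_0\in Z(u)\cap B_1$ and every scale. Away from $Z(u)$ the ratio solves the uniformly elliptic equation \eqref{eq:ratioequation} with $C^{k-1,\alpha}$ coefficients $u^2A$ on sets where $|u|$ is comparable to its local $L^2$ size. So the interior Schauder theory, together with a covering argument using \eqref{eq:doubling}, reduces everything to the following estimate. For each $x_0\in Z(u)\cap B_1$ there is a polynomial $P_{x_0}$ of degree $\le k$ with
\begin{equation*}
\Big(\int_{B_r(x_0)}(v-P_{x_0}u)^2\Big)^{1/2}\le C r^{k+\alpha}\Big(\int_{B_r(x_0)}u^2\Big)^{1/2}\frac{\|v\|_{L^2(B_2)}}{\|u\|_{L^2(B_1)}},\qquad 0<r\le 1/4,
\end{equation*}
together with a bound on the coefficients of $P_{x_0}$. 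This is a Campanato characterization weighted by $u$. The step from it to the $C^{k,\alpha}$ estimate for $w$ combines three ingredients. First, $v-P_{x_0}u$ satisfies $L_A(v-P_{x_0}u)=-L_A(P_{x_0}u)$, whose right-hand side is of order $r^{k+\alpha-1}\|\nabla u\|$-type. Second, we use a weighted interior estimate for $w-P_{x_0}$ in the nodal domains. Third, Theorem~\ref{thm:lip} handles the base case and the continuity of $w$ and $\nabla w$ across $Z(u)$.

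The polynomial expansion is built by induction on $k$, with Theorem~\ref{thm:lip} as the case $k=1$, and by an improvement-of-flatness iteration at dyadic scales $r_j=\rho^j$. Suppose at scale $r$ we have $\|v-\pi u\|_{L^2(B_r)}\le \varepsilon r^{k+\alpha}\|u\|_{L^2(B_r)}$ after normalization. We rescale $u_r(x)=u(x_0+rx)/\|u\|_{L^2(B_r(x_0))}$. By \eqref{eq:doubling} the family $u_r$ is compact, and blow-ups converge to solutions of the frozen constant-coefficient equation, hence to harmonic functions after a linear change of variables, with doubling constant at most $D$. The error $(v-\pi u)/(\varepsilon r^{k+\alpha})$ solves $L_A(\cdot)=$ a correction term involving $\nabla\pi\cdot A\nabla u$ and $\div(A\nabla \pi)u$. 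The polynomial $\pi$ is chosen so that this correction is of higher order; this is where $A\in C^{k-1,\alpha}$ is used, through a Taylor expansion of $A$ at $x_0$. Any limit profile $h$ of the error then satisfies $h=\tilde w\,u_\infty$, with $\tilde w$ a solution of $\div(u_\infty^2\nabla\tilde w)=0$ in the limit. The zero set inclusion must pass to the limit; we get it from the ratio bound of Theorem~\ref{thm:holder}, since $|h|\le C|u_\infty|$. For harmonic $u_\infty$ and $h$ with $Z(u_\infty)\subseteq Z(h)$ we then invoke the Logunov--Malinnikova analyticity of the ratio. Applied uniformly over the compact class, this gives $\tilde w$ real analytic with bounds, so $\tilde w$ is approximated by its degree-$k$ Taylor polynomial with error $O(\rho^{k+1})$. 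Since $k+1>k+\alpha$, this improves the flatness at scale $\rho r$, and a contradiction-compactness argument makes it quantitative.

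The main obstacle is the compactness step. We need uniform control of the analyticity radius and bounds of $h/u_\infty$ over all harmonic $u_\infty$ with doubling constant at most $D$, including singular nodal points. We also need to ensure that the correction polynomial built from the variable coefficients does not destroy the zero set inclusion in the limit. My first attempt would avoid both by never using the limiting inclusion directly. Instead, the induction hypothesis (the $C^{k-1,\alpha}$ bound on $w$, hence on the rescaled ratios) gives uniform $C^{k-1,\alpha}$ bounds on the rescaled quotients. These pass to the limit, and for the constant-coefficient limit the improvement to degree $k$ follows from homogeneity considerations of $\div(u_\infty^2\nabla\cdot)$. Finally, summing the geometric series of polynomial corrections gives $P_{x_0}$ and the Campanato estimate above, which yields \eqref{eq:higher-conclusion}.
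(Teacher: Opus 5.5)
Your outline has a genuine gap at its core: you have no control of the equation satisfied by the amplified error, and hence neither compactness nor the claimed limit equation. With $u_r=u(x_0+r\,\cdot)/\kappa_r$ and $h_r=(v-\pi u)(x_0+r\,\cdot)/(\kappa_r\varepsilon r^{k+\alpha})$, one has $L_{A(x_0+r\,\cdot)}h_r=-\varepsilon^{-1}r^{1-k-\alpha}\bigl(2A\nabla\pi\cdot\nabla u_r+r\,u_r\,\mathrm{div}(A\nabla\pi)\bigr)$. Here $\nabla\pi$ is bounded but in general nonzero, since it approximates $\nabla w(x_0)$. So the right-hand side has size $\varepsilon^{-1}r^{1-k-\alpha}$, which is unbounded.

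You assert that $\pi$ can be chosen to make this correction of higher order, but you give no mechanism, and none is available here. Since $L_A(wu)=0$, one has $L_A(\pi u)=L_A((\pi-w)u)$, so making it small amounts to already knowing a $C^{k,\alpha}$ expansion of $w$ at $x_0$. The De Silva--Savin construction of such polynomials relies on a nondegenerate normal derivative of $u$, which fails at singular points of $Z(u)$. Without such a restriction the limit need not be harmonic. For $A=I$, $u=x_1$, $v=x_1x_2$ and $\pi=x_2-c\,x_2^2$ one has $v-\pi u=c\,x_1x_2^2$, so the normalized error is the non-harmonic function $x_1x_2^2$.

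Thus the identity $h=\tilde w\,u_\infty$ with $\mathrm{div}(u_\infty^2\nabla\tilde w)=0$ is unjustified. The Logunov--Malinnikova theorem needs both functions harmonic, and you would also need uniform analyticity bounds that you do not establish, so it cannot be invoked.

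The paper's proof is built to bypass exactly this. It uses the best fit in $L^2(d\mu)$, so the blow-up limit satisfies $V\perp U\mathcal P_k$. It writes the amplified error as $v^*-\tilde\pi\,\tilde u$, where $v^*$ and $\tilde u$ are exact solutions. Both are replaced by frozen-coefficient functions through $k-1$ iterated corrections built from the Taylor polynomial of $A$. The resulting error is of order $r^{k-1+\alpha}(1+r/\sigma)=r^{k-1+\alpha}+M^{-1}\to0$, where $\sigma=Mr^{k+\alpha}$ and $M\to\infty$ along the contradiction sequence; this works even though $r/\sigma$ itself may diverge. The limit $V$ is then only $L_{A_0}$-polyharmonic. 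Liouville's theorem together with Murdoch's Lemma~\ref{lem:murdoch}, which requires harmonicity of the denominator only, gives $V=U\tilde R$ with $\tilde R\in\mathcal P_k$, contradicting orthogonality.

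Several auxiliary steps are also unsupported.
\begin{itemize}
\item Theorem~\ref{thm:holder} cannot give $|h|\le C|u_\infty|$. It applies to pairs of $L_A$-solutions, and $v-\pi u$ is not one for nonconstant $\pi$. The paper needs no such bound: $Z(\tilde u)\subseteq Z(\tilde v)$ holds exactly before the limit, and it is transferred by local uniform convergence and the sign change of $U$ near each of its zeros. The polyharmonic approximants are never required to respect the nodal set.
\item Your fallback is circular. A $C^{k-1,\alpha}$ bound on $w$ controls $(w-\pi)(x_0+r\,\cdot)/(\varepsilon r^{k+\alpha})$ only up to a factor $(\varepsilon r)^{-1}$, so it gives no uniform bounds at the amplified scale. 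No homogeneity argument replaces the divisibility $Z(Q)\subseteq Z(P)\Rightarrow P=QR$ for harmonic $Q$, which is exactly where Murdoch's lemma enters.
\item The size $r^{k+\alpha-1}$ you attribute to $L_A(v-P_{x_0}u)$ in your reduction presupposes $w\in C^{k,\alpha}$.
\item The splitting into nodal and interior points is unnecessary. By doubling, the norm equivalence of Lemma~\ref{lem:polynomial-moments} holds uniformly at every center, so the excess decay is proved at all $x_0\in B_1$. A Campanato argument then yields $|w(y_0)-\pi_{x_0}(y_0)|\le C\mathcal N(u,v)|x_0-y_0|^{k+\alpha}$ directly.
\end{itemize}
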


The constants in these estimates depend only on the prescribed ellipticity and coefficient bounds, the doubling constant, and the regularity indices. Thus the estimates remain uniform when the denominator varies within this class, even if the local geometry of its nodal set changes.

Notice also that Theorems \ref{thm:lip} and \ref{thm:higher} are optimal, in the sense that one can easily provide counterexamples to any better H\"older regularity. Moreover, when $A$ is real analytic, the argument of \cite{LogMal15,LogMal16} extends to show that $w$ is real analytic. In this sense, the present results complete the regularity picture for boundary Harnack principles on nodal domains.

\subsection{Strategy of the argument}
We would like to briefly discuss the strategy underlying the arguments of this paper. The techniques developed here are, in spirit, much closer to \cite{DesSav15,LogMal15,LogMal16} than to \cite{LinLin22} or \cite{TerTorVit26a,TerTorVit26b}. Moreover, the proofs developed here do not require any of the previously known regularity results contained in the latter papers.

The approach of \cite{LinLin22} treats the nodal set $Z(u)$ essentially as a fixed, possibly rough, boundary and develops geometric boundary Harnack arguments inside the nodal domains of $u$; that is, connected components of $\{u\neq0\}$. This viewpoint is well suited to proving H\"older continuity of the quotient, but becomes less natural when one seeks higher regularity. Indeed, $Z(u)$ is not an actual boundary for the original equations: both $u$ and $v$ solve the same elliptic equation across $Z(u)$, and the information contained in this transmission across the nodal set is lost if one works independently on the different nodal domains.

A different viewpoint is provided by the degenerate equation
\eqref{eq:ratioequation}, used in \cite{TerTorVit26a,TerTorVit26b}. Its main advantage is that it is naturally posed on the whole ball and therefore retains the global character of the quotient across $Z(u)$. On the other hand, the geometry of the nodal set is encoded in the degenerating weight $u^2$. In particular, near the singular set $S(u)=\{u=0,\ \nabla u=0\}$, the weight may vanish to high order - with an upper bound when the doubling constant $D$ is fixed - and its degeneracy may occur along sets with a complicated stratified structure. This makes it difficult to obtain higher order estimates directly from the degenerate equation alone.

As far as the nodal set $Z(u)$ is concerned, the arguments developed here use essentially only the doubling property of the weighted measure
$$
d\mu=u^2\,dx.
$$
The geometry of $Z(u)$, including its stratified structure and the possible complexity of its singular set, is largely invisible to the arguments contained in our proofs.

For later notational convenience, we use $d\in\mathbb N$ in place of $k$
for the regularity index. The proof of $C^{d,\alpha}$ regularity is based
on a \emph{best polynomial approximation} argument: at every point $x_0$ and scale
$r>0$, we approximate $w$ by a polynomial $\pi_{x_0,r}\in\mathcal P_d$
which best fits $w$ in the weighted space $L^2(d\mu)$. Equivalently, this
amounts to approximating $v$ by $\pi_{x_0,r}u$ in the usual $L^2$ norm.
The corresponding normalized approximation error, the $d$th-order excess, is
$$
\mathcal E_d(x_0,r)=\inf_{\pi\in\mathcal P_d}\left(\fint_{B_r(x_0)}|w-\pi|^2\,d\mu\right)^{1/2}=\inf_{\pi\in\mathcal P_d}\left(\frac{\int_{B_r(x_0)}|v-\pi u|^2}{\int_{B_r(x_0)}u^2}\right)^{1/2},
$$
where $\mathcal P_d$ denotes the space of real polynomials of degree at
most $d$. Here $\fint_B f\,d\mu=\mu(B)^{-1}\int_Bf\,d\mu$.

As we have already remarked, the approximation of $v$ by $u$ times a
polynomial is central both to the higher order boundary Harnack argument
of De Silva and Savin \cite{DesSav15} and to the analytic division method
of Logunov and Malinnikova \cite{LogMal15,LogMal16}. There are, however,
two substantial differences in the present setting. First, the vanishing
of $u$ along $Z(u)$ is not necessarily linear, nor is there in general a
distinguished normal direction along which $u$ has a nonvanishing
derivative of fixed sign, as in \cite{DesSav15}. Second, in contrast with
the analytic setting of \cite{LogMal15,LogMal16}, we do not have convergent
power series at our disposal. Polynomial approximation therefore produces
genuine remainder terms, whose control is one of the main difficulties of
the argument. 

Let us now summarize the main steps of the regularity argument:

\begin{itemize}
\item[$1)$] existence and uniqueness of the best polynomial fit, together with quantitative control of its coefficients and their variation across scales;
\item[$2)$] decay of the corresponding excess with the appropriate power of the radius;
\item[$3)$] a Campanato-type argument \cite{Cam63,Cam64} which identifies the limiting polynomial coefficients with the derivatives of $w$ and yields the desired estimates in H\"older norms.
\end{itemize}

The core of the argument is point $2)$, which is proved by a blow-up and
compactness procedure in the spirit of classical excess-decay arguments;
see \cite{GiuMir68,GiaNec79,Eva86,EvaGar87}. To establish the decay estimate
\begin{equation}\label{eq:excess-bound-intro}
\mathcal E_d(x_0,r)\leq Cr^{d+\alpha},
\end{equation}
we normalize the data and argue by contradiction. We select centers $x_k$, scales $r_k\to0$, and excesses $\sigma_k:=\mathcal E_d(x_k,r_k)$ such that
$$
M_k=\frac{\sigma_k}{r_k^{d+\alpha}}\to\infty,
$$
while the excesses at all larger scales, up to a fixed radius $r_0$, remain controlled in terms of $M_k$. Set the blow-up sequences
$$
\kappa_k^2=\int_{B_1}u_k^2(x_k+r_kx),\qquad
 \tilde u_k(x)=\frac{u_k(x_k+r_kx)}{\kappa_k},\qquad
 \tilde v_k(x)=\frac{(v_k-\pi_{x_k,r_k}u_k)(x_k+r_kx)}{\kappa_k\sigma_k}.
$$
Notice that $Z(\tilde u_k)\subseteq Z(\tilde v_k)$. By the minimizing properties of $\pi_{x_k,r_k}$,
$$
\|\tilde u_k\|_{L^2(B_1)}=\|\tilde v_k\|_{L^2(B_1)}=1,
$$
and $\tilde v_k$ is orthogonal to $\tilde u_k\mathcal P_d$ in $L^2(B_1)$. This orthogonality condition will play a central role in the contradiction argument. It is precisely the Euler-Lagrange condition associated with the choice of $\pi_{x_k,r_k}$ as the best polynomial fit in the weighted $L^2$ norm.

Then, comparison of the best fits at consecutive scales yields, on every fixed ball $B_R$, $R\geq1$, for all sufficiently large $k$ depending on $R$,
$$
\|\tilde v_k\|_{L^2(B_R)}\leq CR^{d+\alpha}\|\tilde u_k\|_{L^2(B_R)},\qquad \int_{B_R}\tilde u_k^2 \leq DR^{\log_2D}.
$$
The constants in these growth bounds are independent of $R$ and $k$.
The equation satisfied by the numerator remainder depends on the degree
of the polynomial fit. Constant subtraction preserves the original
elliptic equation. After affine subtraction, a suitable frozen-coefficient
replacement leads in the limit to a biharmonic equation. In the
higher-degree case, one retains the Taylor polynomial of the coefficients
and introduces finitely many correction terms; the corresponding limiting
remainder is polyharmonic of finite order. Smoothness, Caccioppoli
estimates and Sobolev embedding then provide the required compactness for
these frozen-coefficient polyharmonic remainders. Together with the
compactness of the denominator, this yields convergence to some limiting profiles
$$
\tilde u_k\to U,\quad\tilde v_k\to V,
$$
locally uniformly and strongly in $L^2_{\loc}(\R^n)$. The rescaled coefficients converge to a constant uniformly elliptic matrix $A_0$. We first pass the nodal inclusion to these limits. Every zero of the nontrivial $L_{A_0}$-harmonic function $U$ is approached by zeros of $\tilde u_k$, because $U$ changes sign in every neighborhood of such a zero. At those points $\tilde v_k=0$, so local uniform convergence gives $Z(U)\subseteq Z(V)$.

After a fixed linear change of coordinates, we reduce the limiting equations satisfied by $U,V$ to the corresponding equations with identity coefficients satisfied by $\tilde U,\tilde V$. Explicitly, $S=A_0^{1/2}$, $\tilde U(y)=U(Sy)$ and $\tilde V(y)=V(Sy)$. In these coordinates, a polynomial integral-growth Liouville theorem, see Theorem \ref{thm:app-liouville}, implies that the limiting profiles are polynomials, say
$$
\tilde U=Q,\qquad \tilde V=P.
$$
Here $Q$ is harmonic, while $P$ is in general only polyharmonic. The relative growth estimate gives
$$
\deg P\leq \deg Q+d.
$$
Moreover, the linear change preserves the nodal inclusion, so that
$$
Z(Q)\subseteq Z(P).
$$

At this point, we can apply the main rigidity result, due to Murdoch \cite[Theorem 2 and Lemma 4]{Mur64}. The precise version below is also stated in \cite[Lemma 5.6]{TerTorVit26a}.

\begin{Lemma}[Murdoch's division lemma \cite{Mur64}]\label{lem:murdoch}
Let $Q$ be a nontrivial harmonic polynomial and $P$ be a polynomial with real coefficients in $\R^n$, such that $Z(Q)\subseteq Z(P)$. Then, there exists a polynomial $R$ such that $P=QR$.
\end{Lemma}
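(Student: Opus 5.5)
We prove Murdoch's division lemma by induction on $\deg Q$, dividing out irreducible factors of $Q$ one at a time. The first step is to factor $Q=Q_1^{m_1}\cdots Q_s^{m_s}$ into irreducible real polynomials. It suffices to show that $Q_1^{m_1}\mid P$ for each $j$ (with $Q_j$ in place of $Q_1$), since the factors are pairwise coprime and $\mathbb R[x]$ is a UFD. The key input is a \emph{sign-changing} property of harmonic polynomials. A nontrivial harmonic $Q$ has mean value zero on spheres centered at its zeros, so it takes both signs near every point of $Z(Q)$. Combined with analyticity, this should show that every irreducible factor $Q_j$ of $Q$ has a real zero set $Z(Q_j)$ of dimension $n-1$. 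More precisely, $Z(Q_j)$ contains a point $x_0$ where $\nabla Q_j(x_0)\neq0$ and where no other factor $Q_i$ vanishes.

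To prove this, I would argue as follows. Suppose some factor $Q_j$ had real zero set of dimension $\le n-2$, or no real zeros at all. Then $Q_j$ would have constant sign off a set of codimension $\ge2$. Write $Q=Q_j^{m}\tilde Q$ and look near a generic point of $Z(Q_j)$, or argue by counting. The sign structure of $Q$ across $Z(Q)$ would then be governed by $\tilde Q$ alone, while $Z(Q)\supsetneq Z(\tilde Q)$ only on a set of codimension $\ge 2$. One then needs to show that such a $Q_j$ must be a constant. I would try to get a contradiction with harmonicity via the fact that the zero set of a nonconstant harmonic function has Hausdorff dimension exactly $n-1$ locally at every zero. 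This follows from the Łojasiewicz-type structure, or from the sign-change property: a set of codimension $\ge2$ does not disconnect any neighborhood, so $Q$ could not change sign there.

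The real analytic algebraic step comes next. Fix an irreducible $Q_j$ and a regular point $x_0\in Z(Q_j)$ as above. The hypersurface $\Sigma=\{Q_j=0\}$ is smooth near $x_0$ and has dimension $n-1$. Since $Z(Q)\subseteq Z(P)$, the polynomial $P$ vanishes on an open piece of $\Sigma$. By the real Nullstellensatz for irreducible polynomials whose real zero set has a smooth point of dimension $n-1$ (that is, real irreducible polynomials that change sign), this forces $Q_j\mid P$. Here the sign-change of $Q_j$ at $x_0$ is what makes the real Nullstellensatz applicable.

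Multiplicities are handled by induction. Write $P=Q_jP_1$ and $Q=Q_jQ'$. We need $Z(Q')\subseteq Z(P_1)$ near the generic points, but $Q'$ is no longer harmonic, and this is the main obstacle. I would exploit harmonicity through local orders instead. Near $x_0$, choose coordinates with $Q_j=x_n$ to leading order. Then $Q\sim c\,x_n^{m}$ with $c\neq0$, and sign-change forces $m$ to be odd. The more delicate requirement is that $P$ must vanish to order $\ge m$ on $\Sigma$.

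This is where the paper's setting should be used more fully. The inclusion $Z(Q)\subseteq Z(P)$ alone cannot control the order of vanishing, as $P=Q_j$ shows. So I expect the precise statement of \cite[Lemma 4]{Mur64} to show that harmonicity forces $m_j=1$ for every factor. The argument is that near a regular point the leading term $x_n^{m}$ of a harmonic function must itself be harmonic, which requires $m\le1$. With all $m_j=1$, the induction is unnecessary and $P=Q\,R$ follows from coprimality. The hardest step is rigorously justifying that the homogeneous leading term at a smooth point of $\Sigma$ is $c\,x_n^{m}$ and is harmonic.
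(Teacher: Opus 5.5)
The paper does not prove this lemma. It quotes it from \cite[Theorem 2 and Lemma 4]{Mur64} (see also \cite[Lemma 5.6]{TerTorVit26a}), so your argument has to stand on its own. Your architecture is the right one. If $Q$ is squarefree and every irreducible factor $Q_j$ changes sign, the sign-changing criterion for real irreducible polynomials gives $I(Z(Q_j))=(Q_j)$. Since $Z(Q_j)\subseteq Z(Q)\subseteq Z(P)$, this yields $Q_j\mid P$, and coprimality gives $Q\mid P$.

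The genuine gap is the claim that every irreducible factor of $Q$ changes sign, which is exactly the nontrivial content of Murdoch's theorem. Your local argument combines the sign change of $Q$ at its zeros with the fact that a set of codimension $\ge2$ does not disconnect. It yields a contradiction only at a point $x_0\in Z(Q_j)\setminus Z(\tilde Q)$. It says nothing when $Z(Q_j)\subseteq Z(\tilde Q)$, in particular when $Q_j$ has no real zeros at all (think of a positive definite factor such as $1+|x|^2$). In that case $Z(Q)=Z(\tilde Q)$ and the sign pattern of $Q$ near $Z(Q)$ is that of $\tilde Q$. Neither the mean value property nor the $(n-1)$-dimensionality of $Z(Q)$ detects $Q_j$. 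The sentence ``one then needs to show that such a $Q_j$ must be a constant'' just restates what has to be proved. Your regular point $x_0$, used both for the Nullstellensatz step and for $m_j=1$, is only available after this step, so the whole proof rests on it.

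A global use of $\Delta Q=0$ closes the gap. Suppose $g\ge0$ is a nonconstant polynomial with $Q=gs$. Pass to top-degree homogeneous parts: $Q_N=g_as_b$, where $Q_N\neq0$ is homogeneous harmonic, $g_a\ge0$ because $g_a(x)=\lim_{t\to\infty}t^{-a}g(tx)$, and $b=N-a<N$. By the Fischer decomposition, $s_b$ restricted to $S^{n-1}$ is a sum of spherical harmonics of degree at most $b<N$, hence orthogonal to $Q_N$ in $L^2(S^{n-1})$. Thus $0=\int_{S^{n-1}}Q_Ns_b\,d\sigma=\int_{S^{n-1}}g_as_b^2\,d\sigma$, which forces $g_as_b^2\equiv0$ on $S^{n-1}$. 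Hence $Q_N^2=g_a\,(g_as_b^2)\equiv0$ on $S^{n-1}$, so $Q_N\equiv0$, a contradiction.

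Applying this with $g=Q_j^2$ gives squarefreeness. Applying it with $g=\pm Q_j$ shows that no irreducible factor is semidefinite, i.e.\ each one changes sign. The sign-changing criterion (e.g.\ Bochnak--Coste--Roy, \emph{Real Algebraic Geometry}, Thm.~4.5.1) then finishes the proof. Your leading-term argument for $m_j=1$ becomes unnecessary; in any case it is a routine Taylor expansion at $x_0$, not the hard step.
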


It is important to remark here that the above result requires harmonicity only of $Q$: the numerator $P$ can be any polynomial. This is why we can allow the limiting numerator remainder to be polyharmonic. Then, Murdoch's lemma yields
$$
P=Q R,\qquad\deg R\leq d.
$$
Transporting this identity back through the inverse linear change of coordinates, we obtain
$$
V=U\tilde R
$$
for some $\tilde R\in\mathcal P_d$. Indeed, $\tilde R=R\circ S^{-1}$, and composition with an invertible linear map preserves polynomial degree. The limiting orthogonality $V \perp U\mathcal P_d$ therefore contradicts the unit $L^2(B_1)$ normalization of $V$
$$
0=\int_{B_1}VU\tilde R=\int_{B_1}V^2=1.
$$

Finally, let us remark that the advantage of proving \eqref{eq:excess-bound-intro} before deriving the regularity estimates in H\"older norms is that the excess is well defined independently of any a priori regularity of the quotient. A contradiction argument based directly on the $C^{d,\alpha}$ seminorm, in the spirit of \cite{Sim97}, would instead require the desired regularity to be already available, at least at an a priori level. Such an a priori--a posteriori scheme is particularly inconvenient in the present setting, since no natural regularization procedure preserving both the quotient structure and the nodal set inclusion is available. The excess, instead, is well defined without any prior regularity of $w$, and its decay will itself produce the desired regularity.

\subsection{Structure of the paper}
Section \ref{sec:holder} proves Theorem \ref{thm:holder}. We first construct the best constant fits, then prove decay of the zeroth-order excess, and finally recover the continuous quotient and its H\"older estimate.
Section \ref{sec:first-order} follows the same steps with affine fits to prove Theorem \ref{thm:lip}; the additional ingredient is compactness of the amplified affine remainders through frozen-coefficient biharmonic replacements.
Section \ref{sec:higher-order} repeats this scheme with polynomial fits and proves Theorem \ref{thm:higher}. Here we keep the Taylor polynomial of the coefficients and use finitely many corrections to obtain polyharmonic replacements with a sufficiently small error.
Appendix \ref{app:polyharmonic} collects the second-order Schauder estimates used for the replacements, smoothness for powers of the Laplacian, the constant-coefficients Caccioppoli inequality and the polynomial Liouville theorem under integral growth for polyharmonic functions. We finally remark that some of the results in the paper could have been condensed into a single statement valid for every degree $d\in\mathbb N$. We have preferred to present the different orders separately, since the notation becomes increasingly involved as the degree grows. This allows the reader to become familiar with the main mechanism at the first levels, before facing the additional algebraic and notational complications of the general case.\\

{\bf Notation.} Constants are denoted by $c$ or $C$ and may change from line to line, even within the same computation. Subscripts indicate dependence on additional parameters; for instance, $C_a$ denotes a constant whose dependence on $a$ is being emphasized (it may still change from line to line).
We use the convention $\mathbb N=\{0,1,2,\dots\}$ for natural numbers. The open ball of radius $r>0$ centered at $x_0\in\R^n$ is denoted by $B_r(x_0)=\{x\in\R^n:\ |x-x_0|<r\}$, and we write $B_r=B_r(0)$.
For a multiindex $\nu=(\nu_1,\ldots,\nu_n)\in\mathbb N^n$, write $|\nu|=\sum_i\nu_i$, $\nu!=\prod_i\nu_i!$, $x^\nu=\prod_i x_i^{\nu_i}$ and $D^\nu=\partial_1^{\nu_1}\cdots\partial_n^{\nu_n}$. 
Unless otherwise specified, integrals are taken with respect to the $n$-dimensional Lebesgue measure, and the differential symbol $dx$ is omitted. We explicitly write $d\mu$ when integration is taken with respect to the weighted measure $\mu$.

\section{$C^{0,\alpha}$ estimates under Lipschitz coefficients}\label{sec:holder}
The aim of this section is the proof of Theorem \ref{thm:holder}.

Let us define $d\mu=u^2 \,dx$. Since $u$ is nontrivial in $B_2$, unique continuation gives $\mu(B_1)=\|u\|^2_{L^2(B_1)}>0$. Define $w=v/u$ on $\{u\neq0\}$ and assign arbitrary values on $Z(u)$. This defines an element of $L^2(B_2,d\mu)$ with $\|w\|_{L^2(B_2,d\mu)}=\|v\|_{L^2(B_2)}$.

\begin{Lemma}\label{lem:mass}
Assume \eqref{eq:doubling}. For every fixed $\rho\in(0,1/4]$, there exists a constant $c_\rho=c(n,D,\rho)>0$ such that
\begin{equation}\label{eq:mass-lower}
 \mu(B_\rho(x_0))\ge c_\rho\mu(B_1)\qquad \forall \, x_0\in B_{3/2}.
\end{equation}
\end{Lemma}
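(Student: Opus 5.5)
The plan is to iterate the doubling inequality \eqref{eq:doubling} a bounded number of times, moving from the small ball $B_\rho(x_0)$ up to a ball containing $B_1$, while staying within the admissible ranges $x_0\in B_{3/2}$ and $r\le 1/4$. The difficulty is that the doubling inequality is only available at radii up to $1/4$. So a ball around $x_0$ of radius $1/4$ cannot directly contain $B_1$, and we must chain through intermediate centers.

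\textbf{Step 1 (enlarging the radius at a fixed center).} Fix $x_0\in B_{3/2}$ and $\rho\in(0,1/4]$. Let $m\in\mathbb N$ be the smallest integer with $2^m\rho\ge 1/4$, so that $m\le \log_2(1/(4\rho))+1$. Applying \eqref{eq:doubling} at the radii $\rho,2\rho,\dots,2^{m-1}\rho$ is legitimate only while the radius stays $\le 1/4$. To avoid this issue, replace $\rho$ by $\rho'=2^{-m'}\cdot\frac14\le\rho$, the largest dyadic fraction of $1/4$ not exceeding $\rho$. Since $\mu(B_\rho(x_0))\ge\mu(B_{\rho'}(x_0))$, it suffices to treat $\rho'$. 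Iterating \eqref{eq:doubling} at the radii $\rho',2\rho',\dots,1/8$ gives
$$
\mu(B_{1/4}(x_0))\le D^{m'}\mu(B_{\rho'}(x_0)), \qquad m'\le \log_2(1/\rho)\ \text{roughly}.
$$
One further application at $r=1/4$ gives $\mu(B_{1/2}(x_0))\le D^{m'+1}\mu(B_{\rho'}(x_0))$.

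\textbf{Step 2 (covering $B_1$ by controlled balls).} Cover $\overline{B_1}$ by finitely many balls $B_{1/4}(y_j)$ with $y_j\in B_1$, $j=1,\dots,N$, where $N=N(n)$. For each $j$, connect $x_0$ to $y_j$ by a segment, which lies inside $B_{3/2}$ by convexity. Choose points $z_0=x_0,z_1,\dots,z_K=y_j$ on this segment with $|z_{i+1}-z_i|\le 1/4$, so that $K\le 11$. Since $B_{1/4}(z_{i+1})\subset B_{1/2}(z_i)$, Step 1 at radius $1/4$ (one doubling) gives
$$
\mu(B_{1/4}(z_{i+1}))\le \mu(B_{1/2}(z_i))\le D\,\mu(B_{1/4}(z_i)).
$$
Chaining these inequalities yields $\mu(B_{1/4}(y_j))\le D^{K}\mu(B_{1/4}(x_0))$. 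Summing over $j$ then gives
$$
\mu(B_1)\le \sum_j\mu(B_{1/4}(y_j))\le N D^{K}\mu(B_{1/4}(x_0))\le N D^{K+m'}\mu(B_\rho(x_0)).
$$
This is \eqref{eq:mass-lower} with $c_\rho=(N D^{K+m'})^{-1}$, which depends only on $n$, $D$ and $\rho$.

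\textbf{Main point of care.} There is no real obstacle here; the only care needed is bookkeeping. All the centers used must lie in $B_{3/2}$, which holds since the segments are contained in $B_{3/2}$. All the radii used must be $\le 1/4$, which is guaranteed by the dyadic choice of $\rho'$ and by stopping at $1/4$. Positivity of $\mu(B_1)$ is not needed for the inequality itself.
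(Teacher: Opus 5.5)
Your proof is correct and follows essentially the paper's route: cover $B_1$ by balls of radius $1/4$, chain the doubling inequality along segments in $B_{3/2}$ with steps of length at most $1/4$, and then double dyadically from $\rho$ up to $1/4$ at the fixed center $x_0$. The differences are cosmetic. The paper pigeonholes a single covering ball with mass at least $\mu(B_1)/J_n$ and chains only from it, whereas you chain to every covering ball and sum. Your dyadic replacement $\rho'$ is unnecessary, since choosing the least $m$ with $2^m\rho\ge 1/4$ already keeps every radius used, $\rho,\dots,2^{m-1}\rho$, strictly below $1/4$.
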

\begin{proof}
Put $r=1/4$. Cover $B_1$ by $J_n$ balls $B_r(z_j)$ whose centers lie in $B_1$. There exists at least one index $i\in\{1,\dots,J_n\}$ such that $\mu(B_r(z_i))\ge\mu(B_1)/J_n$, otherwise the sum of the measures of $B_r(z_j)$ would be smaller than $\mu(B_1)$, contradicting the fact that they cover $B_1$.

Fix $x_0\in B_{3/2}$ and connect $z_i$ to $x_0$ by a segment. Divide the segment into $N$ pieces with $N\le10$ of length at most $r=1/4$. The latter bounds on the number of pieces and their length are possible since $|x_0-z_i|<5/2$. Each piece has endpoints $p_j,p_{j+1}$ for any $j=0,\dots,N-1$, with $p_0=z_i$ and $p_N=x_0$. By convexity, any $p_j$ lies in $B_{3/2}$.

Notice that for any $j=0,\dots,N-1$ we have the inclusion $B_r(p_{j})\subseteq B_{2r}(p_{j+1})$, which implies, together with \eqref{eq:doubling}
$$
 \mu(B_r(p_{j}))\le \mu(B_{2r}(p_{j+1}))\le D\mu(B_r(p_{j+1})).
$$
Iterating,
$\mu(B_r(x_0))\ge D^{-10}\mu(B_1)/J_n$.

For $0<\rho\le r=1/4$, choose the least integer $k$ with $2^k\rho\ge r$, with the choice $k=0$ if $\rho=r$. Notice that, by minimality of $k$, we have $2^{k-1}\rho<r=1/4$. Notice that $k\to\infty$ as $\rho\to0$. Again, by inclusion and doubling \eqref{eq:doubling}, we have
$$
 \mu(B_r(x_0))\le\mu(B_{2^k\rho}(x_0))\le D^k\mu(B_\rho(x_0)).
$$
Take $c_\rho=(J_nD^{10+k})^{-1}$. This proves \eqref{eq:mass-lower}.
\end{proof}

\subsection{Best constant approximation}
For every center $x_0\in B_{3/2}$ and every radius $0<r\le1/4$, we find the best constant approximation of $w$ at that fixed scale. In other words, let us consider the minimization problem
\begin{equation}\label{eq:constant-approx}
\mathcal E_0(x_0,r)=\inf_{c\in\R}\left(\fint_{B_r(x_0)}|w-c|^2 \, d\mu\right)^{1/2}.
\end{equation}
If it exists, the minimizer is the orthogonal projection of $w$ onto the space of constant polynomials in $L^2(B_r(x_0),d\mu)$, while $\mathcal E_0(x_0,r)$ measures the corresponding projection error. We call $\mathcal E_0(x_0,r)$ the zeroth-order excess of $w$ at $x_0$ and scale $r$.

Here and throughout, the weighted average is understood as
\begin{equation*}
\fint_{B_r(x_0)} f \, d\mu=\frac{1}{\mu(B_r(x_0))}\int_{B_r(x_0)} f \, d\mu.
\end{equation*}

\begin{Lemma}\label{lem:constant-fit}
Let $x_0\in B_{3/2}$ and $0<r\le 1/4$. The following properties hold:
\begin{itemize}
\item[i)] the minimization problem \eqref{eq:constant-approx} admits a unique minimizer $c_{x_0,r}$, with the property
\begin{equation}\label{eq:ortho-constant}
\int_{B_r(x_0)}(w-c_{x_0,r})\,d\mu=0.
\end{equation}
Hence, the constant fit is
\begin{equation}\label{eq:best-constant}
 c_{x_0,r}=\fint_{B_r(x_0)}w \, d\mu.
\end{equation}

\item[ii)] for every $r\le s\le \min\{2r,1/4\}$, one has
\begin{equation}\label{eq:excess-control-constants}
|c_{x_0,r}-c_{x_0,s}|\le \mathcal E_0(x_0,r)+\sqrt{D}\,\mathcal E_0(x_0,s).
\end{equation}

\item[iii)] for every fixed $\rho\in(0,1/4]$, there exists $C=C(n,D,\rho)>0$ such that for every $x_0\in B_{3/2}$ and for every $\rho\le r\le1/4$
\begin{equation}\label{eq:uniform-bound-constant}
\mathcal E_0(x_0,r)+|c_{x_0,r}|\le C \frac{\|w\|_{L^2(B_2,d\mu)}}{\mu(B_1)^{1/2}}=C \frac{\|v\|_{L^2(B_2)}}{\|u\|_{L^2(B_1)}}.
\end{equation}
\end{itemize}
\end{Lemma}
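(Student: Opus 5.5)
The argument is elementary Hilbert-space reasoning in $L^2(B_r(x_0),d\mu)$, combined with the doubling inequality and Lemma \ref{lem:mass}.

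\textbf{Part i).} First, Lemma \ref{lem:mass} gives $\mu(B_r(x_0))\ge c_r\mu(B_1)>0$, so the weighted average is well defined. Also $w\in L^2(B_r(x_0),d\mu)$, since $\int_{B_r(x_0)} w^2\,d\mu=\int_{B_r(x_0)} v^2<\infty$. The function
$$
c\mapsto \int_{B_r(x_0)}|w-c|^2\,d\mu
$$
is a quadratic polynomial in $c$ with leading coefficient $\mu(B_r(x_0))>0$. It is therefore strictly convex and has a unique minimizer. Setting its derivative to zero gives \eqref{eq:ortho-constant}, and solving for $c$ gives \eqref{eq:best-constant}.

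\textbf{Part ii).} Let $r\le s\le\min\{2r,1/4\}$. Since the difference of the two fits is a constant, we can write
$$
|c_{x_0,r}-c_{x_0,s}|=\Bigl(\fint_{B_r(x_0)}|c_{x_0,r}-c_{x_0,s}|^2\,d\mu\Bigr)^{1/2}.
$$
Insert $w$ and apply Minkowski's inequality in $L^2(B_r(x_0),d\mu)$. The first term is $\bigl(\fint_{B_r}|w-c_{x_0,r}|^2\,d\mu\bigr)^{1/2}=\mathcal E_0(x_0,r)$. For the second term, enlarge the domain of integration from $B_r$ to $B_s$, which gives
$$
\Bigl(\fint_{B_r}|w-c_{x_0,s}|^2\,d\mu\Bigr)^{1/2}\le\Bigl(\frac{\mu(B_s(x_0))}{\mu(B_r(x_0))}\Bigr)^{1/2}\mathcal E_0(x_0,s).
$$
Because $s\le 2r$ and $r\le1/4$, the doubling inequality \eqref{eq:doubling} yields $\mu(B_s)\le\mu(B_{2r})\le D\mu(B_r)$, and \eqref{eq:excess-control-constants} follows.

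\textbf{Part iii).} Let $\rho\le r\le 1/4$. Since $c=0$ is a competitor, and since the minimizer is an orthogonal projection, both $\mathcal E_0(x_0,r)$ and $|c_{x_0,r}|$ are bounded by $\bigl(\fint_{B_r(x_0)}w^2\,d\mu\bigr)^{1/2}$. For the second quantity, Cauchy--Schwarz gives $|c_{x_0,r}|\le(\fint w^2\,d\mu)^{1/2}$. Now bound the numerator by enlarging to $B_2$: $\int_{B_r(x_0)}w^2\,d\mu\le\|w\|^2_{L^2(B_2,d\mu)}$, using $B_r(x_0)\subset B_2$ for $x_0\in B_{3/2}$. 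Bound the denominator from below by monotonicity and Lemma \ref{lem:mass}: $\mu(B_r(x_0))\ge\mu(B_\rho(x_0))\ge c_\rho\,\mu(B_1)$. Together these give \eqref{eq:uniform-bound-constant} with $C=2c_\rho^{-1/2}$. The final equality in \eqref{eq:uniform-bound-constant} is just $\|w\|_{L^2(B_2,d\mu)}=\|v\|_{L^2(B_2)}$ and $\mu(B_1)=\|u\|^2_{L^2(B_1)}$.

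\textbf{Main obstacle.} There is no genuine difficulty. The only points needing care are the positivity of $\mu(B_r(x_0))$, which comes from Lemma \ref{lem:mass} and unique continuation, and the fact that the doubling inequality \eqref{eq:doubling} may only be used with radius $r\le1/4$. The latter is exactly why part ii) requires $s\le 2r$ and $r\le1/4$.
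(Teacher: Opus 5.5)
Your proposal is correct and follows the paper's proof essentially step by step: you use Minkowski's inequality plus doubling for ii), and the competitor $c=0$, Cauchy--Schwarz and Lemma \ref{lem:mass} for iii. The only cosmetic difference is in i): you minimize the explicit quadratic $c\mapsto\int_{B_r(x_0)}|w-c|^2\,d\mu$ directly instead of invoking the Hilbert-space projection theorem onto the constants, which amounts to the same argument.
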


\begin{proof}
The point $i)$ is equivalent to proving existence and uniqueness for
\begin{equation*}
\inf_{c\in\R}\|w-c\|_{L^2(B_r(x_0),d\mu)}.
\end{equation*}
The constants form a one-dimensional linear subspace $\mathcal P_0$ of $L^2(B_r(x_0),d\mu)$. Since $\mu(B_r(x_0))>0$, this subspace is nontrivial, and by finite dimensionality, it is closed. The Hilbert-space projection theorem gives a unique best constant $c_{x_0,r}$ minimizing the norm in $\mathcal P_0$. Moreover, the residual $w-c_{x_0,r}$ is orthogonal in $L^2(B_r(x_0),d\mu)$ to every constant function; testing against $1$ gives \eqref{eq:ortho-constant}, which in turn gives \eqref{eq:best-constant}.

In order to prove $ii)$, let us estimate the distance between two best constants at different scales
\begin{align*}
 |c_{x_0,r}-c_{x_0,s}|
 &=\left(\fint_{B_r(x_0)}|c_{x_0,r}-c_{x_0,s}|^2 \,d\mu\right)^{1/2}\\
 &\le \mathcal E_0(x_0,r)
   +\left(\frac{\mu(B_{s}(x_0))}{\mu(B_r(x_0))}\right)^{1/2}
       \mathcal E_0(x_0,s)\\
 &\le \mathcal E_0(x_0,r)+\sqrt D\,\mathcal E_0(x_0,s).
\end{align*}
In the previous inequalities, we added and subtracted $w$ in the integral, we used the Minkowski inequality, the assumption $r\leq s$, the definition of the excess, the assumption $s\leq 2r$, and the doubling inequality from scale $2r$ to $r$.

For the last point $iii)$, choosing the competitor $c=0$ in the minimization \eqref{eq:constant-approx}, we get
\begin{equation*}
\mathcal E_0(x_0,r)\le\frac{\|w\|_{L^2(B_2,d\mu)}}{\mu(B_r(x_0))^{1/2}}.
\end{equation*}
Moreover, by the Cauchy-Schwarz inequality in \eqref{eq:best-constant},
\begin{equation*}
 |c_{x_0,r}|\le\frac{\|w\|_{L^2(B_2,d\mu)}}{\mu(B_r(x_0))^{1/2}}.
\end{equation*}
Then, Lemma \ref{lem:mass} and $r\geq\rho$ give 
$$\mu(B_r(x_0))\geq\mu(B_\rho(x_0))\ge c_\rho\mu(B_1).$$
The latter gives \eqref{eq:uniform-bound-constant}.
\end{proof}

\begin{remark}\label{rem:constant}
In the next argument, it will be convenient to rewrite the best approximation constant \eqref{eq:best-constant} and the excess \eqref{eq:constant-approx} in terms of $u,v$; that is, 
\begin{equation}\label{eq:constant-fit}
 c_{x_0,r}=\frac{1}{\mu(B_r(x_0))}\int_{B_r(x_0)}uv,\qquad
 \mathcal E_0(x_0,r)=\left(\frac{1}{\mu(B_r(x_0))}\int_{B_r(x_0)}|v-c_{x_0,r}u|^2\right)^{1/2}.
\end{equation}
\end{remark}

\subsection{Decay of the zeroth-order excess}

From now on, let's define the normalized $L^2(d\mu)$-size of the quotient
\begin{equation}\label{eq:ratio-norms}
\mathcal N(u,v)=\frac{\|w\|_{L^2(B_2,d\mu)}}{\mu(B_1)^{1/2}}=\frac{\|v\|_{L^2(B_2)}}{\|u\|_{L^2(B_1)}}.
\end{equation}

The next result shows that the assumptions of Theorem \ref{thm:holder} imply a quantitative decay of the zeroth-order excess.

\begin{Proposition}\label{prop:constant-excess}
Fix $r_0=1/4$. The assumptions of Theorem \ref{thm:holder} imply that, for any fixed $\alpha\in(0,1)$, there exists $C_\alpha=C(n,\lambda,\Lambda,L,D,\alpha)>0$ such that for any $x_0\in B_1$ and any $0<r\le r_0$
\begin{equation}\label{eq:constant-excess-bound}
 \mathcal E_0(x_0,r)\le C_\alpha\mathcal N(u,v)r^\alpha.
\end{equation}
\end{Proposition}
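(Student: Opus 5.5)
We argue by contradiction, following the blow-up scheme described in the strategy subsection with $d=0$. By linearity in $v$ we normalize $\mathcal N(u,v)=1$. Suppose the statement fails for some $\alpha\in(0,1)$. Then there are sequences $A_k,u_k,v_k$ satisfying the hypotheses with the same $n,\lambda,\Lambda,L,D$ and $\mathcal N(u_k,v_k)=1$, together with points $x_k\in B_1$ and radii $\rho_k\le r_0$, such that $\mathcal E_0^{(k)}(x_k,\rho_k)\ge k\rho_k^\alpha$. We define
$$\Theta_k(r)=\sup_{x\in B_1}\sup_{r\le s\le r_0}\frac{\mathcal E_0^{(k)}(x,s)}{s^\alpha}.$$
This is finite for $r>0$ by Lemma \ref{lem:constant-fit} iii) and is nonincreasing in $r$, and $\Theta_k(\rho_k)\ge k$. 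We then choose $r_k$ and $x_k$ (up to a factor $1/2$) with $\mathcal E_0^{(k)}(x_k,r_k)\ge \tfrac12\Theta_k(r_k)r_k^\alpha$ and set $M_k=\Theta_k(r_k)\to\infty$. Lemma \ref{lem:constant-fit} iii) forces $r_k\to0$, since the excess is uniformly bounded at scales bounded below. By construction, $\mathcal E_0^{(k)}(x_k,s)\le M_ks^\alpha$ for all $r_k\le s\le r_0$.

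\textbf{Growth bounds.} We chain Lemma \ref{lem:constant-fit} ii) over the dyadic scales $s=2^jr_k$, $2^j\le R$. This gives $|c_{x_k,2^jr_k}-c_{x_k,r_k}|\le CM_k r_k^\alpha 2^{j\alpha}$, with the sum controlled by a geometric series. Hence
$$\Big(\fint_{B_{Rr_k}(x_k)}|w_k-c_{x_k,r_k}|^2d\mu_k\Big)^{1/2}\le CM_kr_k^\alpha R^\alpha .$$
We blow up with $\tilde u_k=u_k(x_k+r_k\cdot)/\kappa_k$ and $\tilde v_k=(v_k-c_{x_k,r_k}u_k)(x_k+r_k\cdot)/(\kappa_k\sigma_k)$, where $\sigma_k=\mathcal E_0^{(k)}(x_k,r_k)\ge \tfrac12M_kr_k^\alpha$. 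This yields $\|\tilde u_k\|_{L^2(B_1)}=\|\tilde v_k\|_{L^2(B_1)}=1$ and $\int_{B_1}\tilde u_k\tilde v_k=0$. It also yields $\|\tilde v_k\|_{L^2(B_R)}\le CR^\alpha\|\tilde u_k\|_{L^2(B_R)}$ and $\int_{B_R}\tilde u_k^2\le DR^{\log_2 D}$ by doubling. Both $\tilde u_k$ and $\tilde v_k$ solve $\div(A_k(x_k+r_k\cdot)\nabla\,\cdot)=0$ in $B_{r_0/r_k}$, since subtracting $c\,u_k$ preserves the equation. The rescaled coefficients have Lipschitz constant $Lr_k\to0$, so along a subsequence they converge uniformly on compacts to a constant matrix $A_0$. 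De Giorgi–Nash–Moser Hölder estimates combined with Caccioppoli give local uniform and $H^1_{\loc}$ compactness. We therefore obtain limits $U,V$ that are $L_{A_0}$-harmonic on $\R^n$, with $\|U\|_{L^2(B_1)}=\|V\|_{L^2(B_1)}=1$, $\int_{B_1}UV=0$, and polynomial $L^2$-growth of orders $\log_2D/2$ and $\log_2D/2+\alpha$.

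\textbf{Rigidity.} The zero set inclusion is $Z(\tilde u_k)\subseteq Z(\tilde v_k)$. For $y\in Z(U)$, $U$ changes sign near $y$ (strong maximum principle, $U\not\equiv0$). Uniform convergence then produces zeros $y_k\to y$ of $\tilde u_k$, where $\tilde v_k(y_k)=0$, hence $V(y)=0$. After the change of variables $S=A_0^{1/2}$, the Liouville theorem (Theorem \ref{thm:app-liouville}) makes $U\circ S=Q$ and $V\circ S=P$ harmonic polynomials with $\deg P\le\deg Q+\alpha$, so $\deg P\le \deg Q$. Lemma \ref{lem:murdoch} gives $P=QR$ with $\deg R\le0$, so $V=cU$. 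Orthogonality then gives $1=\int_{B_1}V^2=c\int_{B_1} UV=0$, a contradiction.

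\textbf{Main obstacle.} The delicate step is the selection of $(x_k,r_k)$ together with the transfer of the growth bound. The excess must be controlled at all intermediate scales for the centers $x_k$ themselves, and that is exactly why the sup over $s\ge r$ in $\Theta_k$ is used. The remaining care is that the centers stay in $B_1$ while the balls $B_{Rr_k}(x_k)$ remain in the region $B_{3/2}$ where doubling and Lemma \ref{lem:constant-fit} apply; this holds since $Rr_k\to0$. A minor point is ensuring $\kappa_k>0$, which follows from unique continuation.
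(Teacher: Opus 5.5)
Your proposal is correct and follows the paper's proof essentially step by step. It uses the same near-maximal choice of $(x_k,r_k)$ controlling the excess at all scales $s\in[r_k,r_0]$, the same chaining of Lemma~\ref{lem:constant-fit}~ii) for the relative growth bound, the same blow-up with orthogonality, the same transfer of the nodal inclusion via sign changes, and the same Liouville--Murdoch contradiction. The differences are cosmetic: you take a supremum $\Theta_k$ for each fixed triple rather than over the whole normalized class, and you use De Giorgi--Nash--Moser plus Caccioppoli in place of the $C^{1,\eta}$ estimate \eqref{eq:E1} for compactness; both variants work.
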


\begin{proof}
Without loss of generality $v$ is nontrivial in $B_2$. Replace $u$ by $u/\|u\|_{L^2(B_1)}$ and $v$ by $v/\|v\|_{L^2(B_2)}$. The new quotient is $w/\mathcal N(u,v)$ and the new excess is $\mathcal E_0/\mathcal N(u,v)$. The equations, zeros, and the doubling
constant $D$ are unchanged. We prove the estimate for these normalized
pairs, for which $\mu(B_1)=\|w\|_{L^2(B_2,d\mu)}^2=\|v\|_{L^2(B_2)}^2=1$, using
the fixed radius $r_0=1/4$ throughout this proof.\\

{\bf Step 1: the contradiction argument and the choice of the scale.}
Let us suppose by contradiction that \eqref{eq:constant-excess-bound} does not hold among normalized solutions. Define
\begin{equation*}
\Theta(\rho)=\sup_{A,u,v}\sup_{x_0\in B_1}\sup_{\rho\leq r\leq r_0}\frac{\mathcal E_0(x_0,r)}{r^\alpha}.
\end{equation*}
The first supremum is taken over all normalized triples satisfying the fixed bounds $n,\lambda,\Lambda,L,D$. Lemma \ref{lem:constant-fit} point $iii)$ implies that for any fixed $\rho\in(0,r_0]$ the quantity $\Theta(\rho)$ is finite. So the contradiction implies that $\Theta(\rho)\to\infty$ as $\rho\to0$.

As $k\to\infty$, let us choose triples $(A_k,u_k,v_k)$ and sequences $\rho_k\to0$, $r_k\in[\rho_k,r_0]$, $x_k\in B_1$ such that
\begin{equation*}
\frac{\mathcal E_0(x_k,r_k)}{r_k^\alpha}\geq \frac{\Theta(\rho_k)}{2}.
\end{equation*}
This is possible by definition of the supremum. Set
$$
 \sigma_k=\mathcal E_0(x_k,r_k),\qquad M_k=\sigma_k/r_k^\alpha.
$$
Then $M_k\to\infty$. Moreover, for any $s\in[r_k,r_0]$ we have
\begin{equation}\label{eq:scale-s}
\mathcal E_0(x_k,s)\leq 2M_ks^\alpha.
\end{equation}
In fact $s\geq r_k\geq\rho_k$ and then $\mathcal E_0(x_k,s)/s^\alpha\leq \Theta(\rho_k)\leq 2M_k$. Notice that $r_k\to0$. In fact, if this is not the case we get for a subsequence $r_k\geq\overline\rho>0$, which implies
$$M_k=\frac{\mathcal E_0(x_k,r_k)}{r_k^\alpha}\leq\Theta(\overline\rho),$$
and the finiteness of $\Theta(\overline\rho)$ is in contradiction with $M_k\to\infty$.\\

{\bf Step 2: blow-up sequences and normalization.}
Let us denote by $c_k$ the best approximation constant at $x_k$ and scale $r_k$; that is, $c_k=c_{x_k,r_k}$. Let us also define
\begin{equation*}
\kappa_k^2=\int_{B_1}u_k^2(x_k+r_kx)=\frac{1}{r_k^n}\int_{B_{r_k}(x_k)}u_k^2.
\end{equation*}
Then, the rescaled coefficients are given by
\begin{equation*}
\tilde A_k(x)=A_k(x_k+r_kx),
\end{equation*}
and the blow-up sequences are given by
\begin{equation*}
\tilde u_k(x)=\frac{u_k(x_k+r_kx)}{\kappa_k},\qquad \tilde v_k(x)=\frac{(v_k-c_ku_k)(x_k+r_kx)}{\kappa_k\sigma_k}.
\end{equation*}
Notice that the equations, together with the zero set inclusion, are preserved; that is,
\begin{equation}\label{eq:zero-exact-pair}
 L_{\tilde A_k}\tilde u_k=L_{\tilde A_k}\tilde v_k=0,\qquad Z(\tilde u_k)\subseteq Z(\tilde v_k).
\end{equation}
These functions are defined on the blow-up domains
$$
 \Omega_k=\frac{B_2-x_k}{r_k}.
$$
Notice that $B_{1/r_k}\subset\Omega_k$, and since $r_k\to 0$, for any $R>0$ the ball $B_R\subseteq\Omega_k$ for $k$ big enough. Thus, $\Omega_k$ expands to $\R^n$.

The definition of $\sigma_k=\mathcal E_0(x_k,r_k)$ and $c_k=c_{x_k,r_k}$ in \eqref{eq:constant-approx} and \eqref{eq:best-constant} can be rewritten in terms of $u_k,v_k$ as in \eqref{eq:constant-fit}; that is,
\begin{equation*}
\mathcal E_0(x_k,r_k)=\left(\frac{\int_{B_1}|v_k-c_k u_k|^2(x_k+r_kx)}{\int_{B_1}u_k^2(x_k+r_kx)}\right)^{1/2},\qquad c_{x_k,r_k}=\frac{\int_{B_1}u_kv_k(x_k+r_kx)}{\int_{B_1}u_k^2(x_k+r_kx)},
\end{equation*}
so that
\begin{equation}\label{eq:normalization-uvk}
\int_{B_1}\tilde u_k^2=\int_{B_1}\tilde v_k^2=1,\qquad \int_{B_1}\tilde u_k\tilde v_k=0.
\end{equation}

{\bf Step 3: integral growth of the blow-up sequences.}
In this step we prove that the rescaled pair $\tilde u_k,\tilde v_k$ does not only have the normalization on $B_1$, but also has controlled growth on every fixed larger ball $B_R$ with $R>1$. This is essential for two later things: compactness on arbitrary $B_R$, and the Liouville argument for the blow-up limit.

For $R=1$, the bounds below follow from \eqref{eq:normalization-uvk}. Fix $R>1$. Since $r_k\to0$, one has $Rr_k\leq r_0$ for every $k\geq \overline k(R)$. Then, let's consider $J\in\mathbb N\setminus\{0\}$ such that $2^{J-1}<R\leq 2^J$, and for $i=0,\dots,J$, let's consider $c_{k,i}=c_{x_k,s_i}$, where $s_i=2^ir_k$ for any $i=0,\dots,J-1$ and $s_J=Rr_k$. By \eqref{eq:scale-s}, we have
\begin{equation}\label{eq:scale-s2}
\mathcal E_0(x_k,s_i)\leq 2M_ks_i^\alpha.
\end{equation}
By the previous inequality, and by Lemma \ref{lem:constant-fit} point $ii)$, since $s_i\le s_{i+1}\le \min\{2s_i,1/4\}$ for any $i=0,\dots,J-1$, one has
\begin{equation}\label{eq:excess-control-constants2}
|c_{k,i}-c_{k,i+1}|\le \mathcal E_0(x_k,s_i)+\sqrt{D}\,\mathcal E_0(x_k,s_{i+1})\leq C_D M_ks_{i}^\alpha.
\end{equation}
Then, summing up the pieces, we have
\begin{equation*}
|c_{k,0}-c_{k,J}|\leq C_{\alpha,D} M_k(Rr_k)^\alpha.
\end{equation*}
For the previous summation notice that, since $2^{J-1}<R$, the geometric-series estimates give
\begin{equation}\label{eq:geometricsum}
\sum_{i=0}^{J-1}s_i^{\alpha}=r_k^\alpha\sum_{i=0}^{J-1}(2^{\alpha})^i\leq(2^{J-1}r_k)^{\alpha}\sum_{i=0}^{\infty}(2^{-\alpha})^i\leq C_\alpha(Rr_k)^{\alpha}.
\end{equation}

Then, in $B_{Rr_k}(x_k)$, adding and subtracting $c_{k,J}u_k$, and by the definition of the excess at scale $Rr_k$, we get
\begin{eqnarray*}
\|v_k-c_{k,0}u_k\|_{L^2(B_{Rr_k}(x_k))}&\leq& \big(\mathcal E_0(x_k,Rr_k)+|c_{k,0}-c_{k,J}|\big)\|u_k\|_{L^2(B_{Rr_k}(x_k))}\\
&\leq& C_{\alpha,D} M_k(Rr_k)^\alpha\|u_k\|_{L^2(B_{Rr_k}(x_k))}.
\end{eqnarray*}
After changing variables and dividing by $\kappa_k\sigma_k=\kappa_kM_kr_k^\alpha$, we have
\begin{equation}\label{eq:growth-vkuk}
\|\tilde v_k\|_{L^2(B_{R})}\leq C_{\alpha,D} R^\alpha \|\tilde u_k\|_{L^2(B_{R})}.
\end{equation}

We finally control the growth of $\tilde u_k$. Let $N_0=\lceil\log_2R\rceil$,
so that $R\leq 2^{N_0}<2R$. By the definition of $\tilde u_k$,
\begin{align}\label{eq:rascaled-u-R}
\int_{B_R}\tilde u_k^2&=\frac{\displaystyle\int_{B_{Rr_k}(x_k)}u_k^2}{\displaystyle\int_{B_{r_k}(x_k)}u_k^2}.
\end{align}

Since
$B_{Rr_k}(x_k)\subset B_{2^{N_0}r_k}(x_k)$, we can iterate the doubling inequality $N_0$ times. Indeed, for every $i=0,\dots,N_0-1$,
$$
2^ir_k\leq2^{N_0-1}r_k<Rr_k\leq r_0\leq1/4,
$$
so the doubling inequality is applicable at every step. Therefore
$$
\int_{B_{Rr_k}(x_k)}u_k^2\leq\int_{B_{2^{N_0}r_k}(x_k)}u_k^2\leq D^{N_0}\int_{B_{r_k}(x_k)}u_k^2,
$$
and consequently, we obtain the integral growth estimate
\begin{equation}\label{eq:polynomialbound-tildeuk}
\int_{B_R}\tilde u_k^2\leq D^{\lceil\log_2R\rceil}\leq D R^{\log_2D}.
\end{equation}

{\bf Step 4: compactness and the limiting equations.}
Extract a subsequence for which $\tilde A_k(0)$ converges to some constant coefficient matrix $A_0$. On every fixed ball $B_R$, $[\tilde A_k]_{C^{0,1}(B_R)}\le Lr_k$ and
$\|\tilde A_k-\tilde A_{k}(0)\|_{L^\infty(B_R)}\le C_R Lr_k$. Thus $\tilde A_k\to A_0$ locally uniformly.

The growth bounds \eqref{eq:polynomialbound-tildeuk} and \eqref{eq:growth-vkuk} of $\tilde u_k,\tilde v_k$ yield local $L^2$-boundedness; interior elliptic estimates (see \eqref{eq:E1} in Remark \ref{rem:second-order-schauder}) then upgrade this to local $C^{1,\eta}$-compactness for any $\eta\in(0,1)$. After passing to a subsequence, let us define $U,V$, the blow-up limits of $\tilde u_k,\tilde v_k$; that is, $\tilde u_k\to U$ and $\tilde v_k\to V$ in $C^{1,\eta}(B_R)$ for any $\eta\in(0,1)$ and $R>0$. This defines the blow-up profiles in the whole of $\R^n$, by a compact exhaustion of $\R^n$, and a standard diagonal argument along subsequences. It is easy to check that $U,V$ are entire solutions to
\begin{equation*}
\mathrm{div}(A_0\nabla U)=\mathrm{div}(A_0\nabla V)=0\qquad\mathrm{in \ }\R^n.
\end{equation*}
The limits of the normalization \eqref{eq:normalization-uvk} are
\begin{equation}\label{eq:zero-limit-normalization}
 \int_{B_1}U^2=\int_{B_1}V^2=1,\qquad\int_{B_1}UV=0.
\end{equation}
In particular $U$ and $V$ are nontrivial. Moreover, the growth conditions \eqref{eq:polynomialbound-tildeuk} and \eqref{eq:growth-vkuk}, in the limit, imply that $U,V$ possess polynomial growth too. In particular
\begin{equation}\label{eq:polynomialbound-UV}
\|U\|_{L^2(B_R)}\leq CR^{\frac12\log_2D},\qquad \|V\|_{L^2(B_{R})}\leq C R^\alpha \|U\|_{L^2(B_{R})}.
\end{equation}

{\bf Step 5: transfer of the nodal inclusion.}
We now show that the nodal set inclusion is preserved in the blow-up limit, namely $Z(U)\subseteq Z(V)$. Let $x_0\in Z(U)$ and fix $\rho>0$. Since $U$ is a nontrivial $L_{A_0}$-harmonic function and $U(x_0)=0$, it must change sign in $B_\rho(x_0)$. Indeed, otherwise $U$ would have one sign in the ball and the strong maximum principle and the unique continuation principle would imply that $U$ vanishes identically there, hence everywhere. Therefore, there exist points $x_\rho^+,x_\rho^-\in B_\rho(x_0)$ such that $U(x_\rho^+)>0$ and $U(x_\rho^-)<0$. By the local uniform convergence $\tilde u_k\to U$, the same strict inequalities hold for $\tilde u_k$ at $x_\rho^\pm$ for all $k$ sufficiently large. Since $B_\rho(x_0)$ is convex, the segment joining $x_\rho^+$ and $x_\rho^-$ is contained in $B_\rho(x_0)$, and by continuity there exists a point $x^0_{k,\rho}\in [x_\rho^+,x_\rho^-]\subset B_\rho(x_0)$ such that $\tilde u_k(x^0_{k,\rho})=0$. The nodal set inclusion for the rescaled pair then gives $\tilde v_k(x^0_{k,\rho})=0$. To pass to the limit, choose $\rho_j\to0$ and $k_j\to\infty$ sufficiently fast so that the previous construction is valid with $\rho=\rho_j$ and $k=k_j$. Setting $x_j:=x^0_{k_j,\rho_j}$, since the balls are shrinking toward $x_0$,
we have $x_j\to x_0$. Since $\tilde v_{k_j}(x_j)=0$, $\tilde v_{k_j}\to V$ locally uniformly and $x_j\to x_0$, it follows that $V(x_0)=0$. Thus $x_0\in Z(V)$.\\

{\bf Step 6: contradiction by polynomial Liouville theorems and Murdoch's division lemma.}
Apply the change $x=Sy$, where $S=A_0^{1/2}$. The transformed functions
$\tilde U(y)=U(Sy)$, $\tilde V(y)=V(Sy)$ are harmonic in $\R^n$ and retain similar $L^2$-integral growth conditions, with the same exponents. In particular
\begin{equation}\label{eq:polynomialbound-tildeU}
\|\tilde U\|_{L^2(B_R)}\leq CR^{\frac12\log_2D}.
\end{equation}
Theorem \ref{thm:app-liouville}, with $m=0$, therefore implies that $\tilde U$ is a harmonic polynomial $Q$. Set $N=\deg Q$. Here the growth exponent in Theorem \ref{thm:app-liouville} is $\gamma=(\log_2D-n)/2\ge0$ (recall that $D\geq 2^n$). In particular, $N\le\lfloor\gamma\rfloor$.

Set $c_+=\max\{1,\sqrt\Lambda\}$, $c_-=\min\{1,\sqrt\lambda\}$ and $\tilde c=c_+/c_-$.
The inclusions $SB_R\subset B_{c_+R}$ and $S^{-1}B_{c_+R}\subset B_{\tilde cR}$, together with change of variables, give for any $R\geq1$
\begin{equation}\label{eq:changed-relative-growth}
 \|\tilde V\|_{L^2(B_R)}\le C R^\alpha\|\tilde U\|_{L^2(B_{\tilde cR})}.
\end{equation}
Here $C$ depends only on the original growth constant, $\alpha,\lambda,\Lambda$.
Since $\tilde U=Q$ is a polynomial of degree $N$, its last norm is bounded by $CR^{N+n/2}$, with $C$ independent of $R$. Then,
\begin{equation*}
\|\tilde V\|_{L^2(B_{R})}\leq CR^{\alpha+N+\frac{n}{2}}.
\end{equation*}
Theorem \ref{thm:app-liouville} with $m=0$ implies that also $\tilde V$ is a harmonic polynomial $P$ of degree $\deg P\leq\lfloor \alpha+N\rfloor$. Since $\alpha\in(0,1)$, then $\lfloor \alpha+N\rfloor=N$. Hence $\deg P\leq\deg Q$.

Of course, the applied linear change of coordinates does preserve the zero set inclusion; that is, $Z(Q)\subseteq Z(P)$. Since $Q$ is a nontrivial harmonic polynomial, Murdoch's division Lemma \ref{lem:murdoch} yields $P=QR$ for some polynomial $R$. The degree inequality $\deg P\leq\deg Q$ implies that actually $\deg P=\deg Q$ and $R$ is constant. Hence $P=cQ$ for some $c\in\mathbb R$.

Returning to the original coordinates yields $V=cU$ in $\R^n$, since composition with an invertible linear map preserves both polynomiality and polynomial degree. The normalizations \eqref{eq:zero-limit-normalization} imply
$$
 0=\int_{B_1}UV=c\int_{B_1}U^2=c.
$$
Hence $V=0$, contradicting $\int_{B_1}V^2=1$. The contradiction bounds $\Theta(\rho)$ uniformly as $\rho\to0$. Restoring the independent normalizations of $u$ and $v$ proves \eqref{eq:constant-excess-bound}.
\end{proof}

\subsection{Proof of Theorem \ref{thm:holder}}
We finally complete the proof of the $C^{0,\alpha}$ estimate.
\begin{proof}[Proof of Theorem \ref{thm:holder}]
Set $r_0=1/4$ and $K=C_\alpha\mathcal N(u,v)$, so that Proposition \ref{prop:constant-excess} gives
$$
\mathcal E_0(x_0,r)\leq Kr^\alpha\qquad\forall\,x_0\in B_1,\quad 0<r\leq r_0.
$$
Let
$$
r_j=2^{-j}r_0,\qquad j\in\mathbb N.
$$
By Lemma \ref{lem:constant-fit} point $ii)$,
$$
|c_{x_0,r_j}-c_{x_0,r_{j+1}}|\leq CKr_j^\alpha.
$$
Therefore $\{c_{x_0,r_j}\}_{j\in\mathbb N}$ is a Cauchy sequence and we may define
$$
\overline w(x_0)=\lim_{j\to\infty}c_{x_0,r_j}.
$$
Summing the geometric series gives
\begin{equation}\label{eq:constant-to-limit}
|\overline w(x_0)-c_{x_0,r_j}|\leq CK\sum_{l=j}^\infty r_l^\alpha=CKr_j^\alpha\sum_{m=0}^\infty (2^{-\alpha})^m\leq CKr_j^\alpha.
\end{equation}

Moreover, by the fixed-scale estimate \eqref{eq:uniform-bound-constant} in Lemma \ref{lem:constant-fit} point $iii)$,
$$
|c_{x_0,r_0}|\leq CK.
$$
Hence
$$
|\overline w(x_0)|\leq |c_{x_0,r_0}|+\sum_{j=0}^\infty |c_{x_0,r_j}-c_{x_0,r_{j+1}}|\leq CK,
$$
and therefore
\begin{equation}\label{eq:holder-Linfty}
\|\overline w\|_{L^\infty(B_1)}\leq CK.
\end{equation}

We now estimate the H\"older seminorm. Let $x_0,y_0\in B_1$ and set
$h=|x_0-y_0|$. Assume first that $0<h\leq r_0/2$, and choose $j\geq1$
such that $r_{j+1}<h\leq r_j$. Then $B_{r_j}(y_0)\subset B_{2r_j}(x_0)=B_{r_{j-1}}(x_0)$. Hence

\begin{align*}
 |c_{y_0,r_j}-c_{x_0,r_{j-1}}|
 &=\left(\fint_{B_{r_j}(y_0)}|c_{y_0,r_j}-c_{x_0,r_{j-1}}|^2 \,d\mu\right)^{1/2}\\
 &\le \mathcal E_0(y_0,r_j)
   +\left(\frac{\mu(B_{r_{j-1}}(x_0))}{\mu(B_{r_j}(y_0))}\right)^{1/2}
       \mathcal E_0(x_0,r_{j-1}).
\end{align*}
By the excess estimate and the doubling inequality, since $B_{r_{j-1}}(x_0)=B_{2r_j}(x_0)\subset B_{4r_j}(y_0)$, we have
$$
|c_{y_0,r_j}-c_{x_0,r_{j-1}}|\leq CKr_j^\alpha.
$$
Using \eqref{eq:constant-to-limit} at $x_0$ and $y_0$ in $B_1$, we obtain
$$
|\overline w(x_0)-\overline w(y_0)|\leq CKr_j^\alpha\leq CK h^\alpha,
$$
because $r_j<2h$.

If $h>r_0/2$, then by \eqref{eq:holder-Linfty},
$$
|\overline w(x_0)-\overline w(y_0)|\leq 2\|\overline w\|_{L^\infty(B_1)}\leq CKh^\alpha,
$$
since $r_0$ is fixed.

Finally, if $u(x_0)\neq0$, then $w=v/u$ is continuous in a neighborhood of
$x_0$, and therefore
$$
c_{x_0,r_j}=\fint_{B_{r_j}(x_0)}w\,d\mu\longrightarrow w(x_0).
$$
Thus $\overline w=w$ on $\{u\neq0\}$. Moreover, $\{u\neq0\}$ is dense
in $B_1$.
Hence $\overline w$ is the unique continuous extension of $w$ to $B_1$.
Together with the $L^\infty$ and H\"older estimates above, this proves
\eqref{eq:holder-conclusion}.
\end{proof}

Notice that, by a fixed rescaling and localization, the H\"older estimate can be extended to a slightly larger interior ball; that is, of radius $1<r^*<3/2$. For later purposes, we choose $r^*=5/4$.

\begin{remark}\label{rem:holder-scaled}
Under the assumptions of Theorem \ref{thm:holder}, for every $0<\alpha<1$ the quotient has a unique continuous extension to $B_{5/4}$ and
\begin{equation}\label{eq:holder-scaled}
 \|w\|_{C^{0,\alpha}(B_{5/4})}\le C(n,\lambda,\Lambda,L,D,\alpha)\mathcal N(u,v).
\end{equation}
\end{remark}

\section{$C^{1,\alpha}$ estimates under Lipschitz coefficients}\label{sec:first-order}
The aim of this section is the proof of Theorem \ref{thm:lip}. Throughout this section, we use the previously proved $C^{0,\beta}$ estimate in $B_{5/4}$, valid for every $\beta\in(0,1)$, as given in Remark \ref{rem:holder-scaled}. Throughout Section \ref{sec:first-order}, fix an auxiliary exponent $\eta\in(0,1)$ for the replacement estimates, for instance $\eta=1/2$. Unless otherwise specified, constants depend only on $n,\lambda,\Lambda,L,D$ and the indicated degrees and exponents.

\subsection{Best affine approximation}\label{sec:affine-fits}
For $x_0\in B_{3/2}$ and $0<r\le1/4$, define the $1$st-order excess
\begin{equation}\label{eq:affine-excess}
 \mathcal E_1(x_0,r)=\inf_{\ell\in\mathcal P_1}\left(\fint_{B_r(x_0)}|w-\ell|^2 \,d\mu\right)^{1/2}=\inf_{\ell\in\mathcal P_1}\left(\frac{1}{\mu(B_r(x_0))}\int_{B_r(x_0)}|v-\ell u|^2\right)^{1/2},
\end{equation}
where $\mathcal P_1$ is the space of polynomials of degree at most $1$.

The next lemma provides existence and uniqueness of the weighted affine fit.

\begin{Lemma}\label{lem:affine-minimizer}
For every center $x_0\in B_{3/2}$ and every radius $0<r\le1/4$, the minimization problem \eqref{eq:affine-excess} has a unique minimizer. We denote it by
$$
 \ell_{x_0,r}(x)=a_{x_0,r}+b_{x_0,r}\cdot(x-x_0).
$$
Moreover, it is characterized by the normal equations
\begin{equation}\label{eq:affine-normal-equations}
 \int_{B_r(x_0)}(w-\ell_{x_0,r})\ell \,d\mu=0\qquad\text{for every }\ell\in\mathcal P_1.
\end{equation}
\end{Lemma}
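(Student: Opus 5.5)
The plan mirrors the proof of Lemma \ref{lem:constant-fit} point $i)$, replacing the one-dimensional space of constants by $\mathcal P_1$. The minimization in \eqref{eq:affine-excess} amounts to finding the point of $\mathcal P_1$ closest to $w$ in the Hilbert space $H=L^2(B_r(x_0),d\mu)$. We already know $w\in H$, since $\|w\|_{L^2(B_2,d\mu)}=\|v\|_{L^2(B_2)}<\infty$. The key point is that $\mathcal P_1$ embeds injectively into $H$, so that it is a genuine $(n+1)$-dimensional subspace, hence closed. The projection theorem then gives existence and uniqueness of the minimizer. It also characterizes the minimizer by the orthogonality of the residual, $w-\ell_{x_0,r}\perp\ell$ for all $\ell\in\mathcal P_1$, which is exactly \eqref{eq:affine-normal-equations}. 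Writing the minimizer in the basis $1,(x-x_0)_1,\dots,(x-x_0)_n$ produces the coefficients $a_{x_0,r},b_{x_0,r}$.

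The only nontrivial step is injectivity. We must show that if $\ell\in\mathcal P_1$ satisfies $\int_{B_r(x_0)}\ell^2u^2=0$, then $\ell\equiv0$. If $\ell\not\equiv0$, then $Z(\ell)$ is either empty or an affine hyperplane, so it has Lebesgue measure zero. Hence $\ell u=0$ a.e. forces $u=0$ a.e. in $B_r(x_0)$. This contradicts $\mu(B_r(x_0))>0$, which holds by Lemma \ref{lem:mass}, or directly by unique continuation for the nontrivial solution $u$ with Lipschitz coefficients. Consequently $\langle\ell,\ell'\rangle=\int_{B_r(x_0)}\ell\ell'u^2$ restricted to $\mathcal P_1$ is positive definite.

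Equivalently, the argument can be phrased through the Gram matrix $G_{ij}=\int_{B_r(x_0)}e_ie_j\,d\mu$ of the basis $e_0=1$, $e_i=(x-x_0)_i$. The previous paragraph shows that $G$ is positive definite. The normal equations then form a uniquely solvable linear system $G\,(a,b)^T=\bigl(\int_{B_r(x_0)} w e_i\,d\mu\bigr)_i$, whose right-hand side is finite by Cauchy--Schwarz. Strict convexity of the quadratic functional then gives that this critical point is the unique minimizer.

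I do not expect any real obstacle here. The nondegeneracy of the weighted Gram matrix is the single point needing care, and it follows from the measure-zero zero set of nontrivial affine functions together with the nontriviality of $u$ on every ball. Quantitative bounds on $G^{-1}$, which would use doubling, are presumably deferred to subsequent lemmas analogous to points $ii)$ and $iii)$ of Lemma \ref{lem:constant-fit}.
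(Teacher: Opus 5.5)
Your proposal is correct and follows the paper's proof. Both reduce the problem to the Hilbert projection onto the finite-dimensional (hence closed) subspace $\mathcal P_1\subset L^2(B_r(x_0),d\mu)$, where injectivity is the only point to check; the one cosmetic difference is that you get injectivity from the Lebesgue-null zero set of a nonzero affine function, whereas the paper uses continuity of $u$ to find a small ball with $u\neq0$ on which $\ell$ must vanish, and both rely on $\mu(B_r(x_0))>0$ from Lemma \ref{lem:mass}.
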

\begin{proof}
The minimization \eqref{eq:affine-excess} is equivalent to
\begin{equation*}
\inf_{\ell\in\mathcal P_1}\|w-\ell\|_{L^2(B_r(x_0),d\mu)}.
\end{equation*}
Distinct affine polynomials define distinct elements of $L^2(B_r(x_0),d\mu)$: if $\int_{B_r(x_0)}\ell^2\,d\mu=0$, continuity of $u$ and the positive $\mu$-measure of the ball (Lemma \ref{lem:mass}) give a smaller open ball where $u\ne0$, on which $\ell$ vanishes. Thus $\ell$ is identically zero. Consequently, the space $\mathcal P_1$ is an $(n+1)$-dimensional, hence closed, linear subspace of $L^2(B_r(x_0),d\mu)$. The Hilbert-space projection theorem gives a unique best affine function $\ell_{x_0,r}$ minimizing the norm. Moreover, the residual $w-\ell_{x_0,r}$ is orthogonal in $L^2(B_r(x_0),d\mu)$ to every element $\ell\in\mathcal P_1$.
\end{proof}

Then, we prove the uniform weighted norm equivalence for affine functions.
\begin{Lemma}\label{lem:affine-moments}
There is $c_0=c_0(n,\lambda,\Lambda,L,D)>0$ such that,
for every $x_0\in B_{3/2}$ and $0<r\le1/4$, and every $a\in\R$, $b\in\R^n$,
\begin{equation}\label{eq:affine-moments}
 c_0(|a|^2+r^2|b|^2)\le\fint_{B_r(x_0)}|a+b\cdot(x-x_0)|^2\,d\mu \le2(|a|^2+r^2|b|^2).
\end{equation}
\end{Lemma}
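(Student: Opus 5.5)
The upper bound is elementary and needs nothing from the equations. On $B_r(x_0)$ we have $|x-x_0|<r$, so Cauchy--Schwarz gives $|a+b\cdot(x-x_0)|^2\le 2|a|^2+2|b|^2|x-x_0|^2\le 2(|a|^2+r^2|b|^2)$. Taking the $\mu$-average of both sides gives the right-hand inequality in \eqref{eq:affine-moments}.

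For the lower bound I would argue by compactness and contradiction, in the same spirit as Proposition \ref{prop:constant-excess}. Both sides are $2$-homogeneous in $(a,b)$, so we may normalize $|a|^2+r^2|b|^2=1$. Set $\beta=rb$ and change variables $x=x_0+ry$. The claim then becomes
$$
\frac{\int_{B_1}|a+\beta\cdot y|^2\,\hat u^2(y)\,dy}{\int_{B_1}\hat u^2}\ge c_0,\qquad \hat u(y)=u(x_0+ry),\quad |a|^2+|\beta|^2=1.
$$
Suppose no uniform $c_0$ exists. Then there are sequences $(A_k,u_k)$ in the admissible class, centers $x_k\in B_{3/2}$, radii $r_k\in(0,1/4]$, and unit vectors $(a_k,\beta_k)$ for which the quotient tends to zero. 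Normalize $\hat u_k=u_k(x_k+r_k\cdot)/\kappa_k$ so that $\int_{B_1}\hat u_k^2=1$. Each $\hat u_k$ solves $\mathrm{div}(\tilde A_k\nabla\hat u_k)=0$ on $B_{1/(4r_k)}\supseteq B_1$, with $\tilde A_k$ having the same ellipticity and Lipschitz constant $\le L r_k\le L$. By the doubling inequality \eqref{eq:doubling} (applicable since $r_k\le 1/4$), $\int_{B_2}\hat u_k^2\le D$.

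Interior estimates (\eqref{eq:E1}) then give uniform $C^{1,\eta}$ bounds on $B_{3/2}$, since the domain of $\hat u_k$ contains $B_4\supset B_2$. Passing to a subsequence, $\hat u_k\to\hat U$ uniformly on $\overline B_1$, $\tilde A_k\to\tilde A$ uniformly (again Lipschitz and elliptic), and $(a_k,\beta_k)\to(a,\beta)$ with $|a|^2+|\beta|^2=1$. The limit satisfies $\int_{B_1}\hat U^2=1$ and $\int_{B_1}|a+\beta\cdot y|^2\hat U^2=0$. Hence $\hat U=0$ on $B_1$ off the hyperplane (or empty set) $\{a+\beta\cdot y=0\}$, which has measure zero because $(a,\beta)\ne0$. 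So $\hat U\equiv0$ a.e. in $B_1$, contradicting $\int_{B_1}\hat U^2=1$. This yields $c_0$ depending only on $n,\lambda,\Lambda,L,D$.

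The main care point is the compactness step. We need the rescaled domains to contain a fixed ball larger than $B_1$ so that interior estimates give uniform convergence on all of $B_1$, and here $r_k\le1/4$ with $x_k\in B_{3/2}$ guarantees $B_{2r_k}(x_k)\subset B_2$. We also need doubling to control the $L^2$ mass on that larger ball. An alternative is to avoid uniform convergence altogether: weak $L^2$ convergence of $\hat u_k^2$ is not enough, but the $H^1$ bound from Caccioppoli plus Rellich gives strong $L^2(B_1)$ convergence of $\hat u_k$, which suffices because $|a_k+\beta_k\cdot y|^2$ is uniformly bounded and converges uniformly on $B_1$.
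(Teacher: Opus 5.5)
Your proof is correct and follows essentially the paper's route: the same elementary upper bound, and for the lower bound the same rescaling $x=x_0+ry$ with $L^2(B_1)$-normalization, the doubling bound $\int_{B_2}\hat u_k^2\le D$, and $C^{1,\eta}$ compactness from \eqref{eq:E1}, leading to a contradiction in the limit. Your ending via the null zero set of a nonzero affine function is equivalent to the paper's use of an open ball where $u_\infty\neq0$. There is one harmless slip: the rescaled domain is only guaranteed to contain $B_{(2-|x_k|)/r_k}\supseteq B_2$, not $B_4$, but $B_2$ is exactly what \eqref{eq:E1} needs to give bounds on $B_{3/2}$.
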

\begin{proof}
The upper bound follows immediately from
$$
|a+b\cdot(x-x_0)|^2\leq 2|a|^2+2r^2|b|^2\qquad\text{in }B_r(x_0).
$$
For the lower bound, let us define
\begin{equation*}
\kappa^2=\frac{1}{r^n}\int_{B_r(x_0)}u^2,\qquad u_{x_0,r}(y)=\frac{u(x_0+ry)}{\kappa},\qquad A_{x_0,r}(y)=A(x_0+ry).
\end{equation*}
The functions $u_{x_0,r}$ and $A_{x_0,r}$ are defined in $B_2$.
Moreover, $A_{x_0,r}$ has the same ellipticity constants as $A$ and $[A_{x_0,r}]_{C^{0,1}(B_2)}\leq rL\leq L$. Then, by a change of variables and the doubling inequality \eqref{eq:doubling}, 
\begin{equation}\label{eq:boundsB1B2}
\int_{B_1}u_{x_0,r}^2=1,\qquad \int_{B_2}u_{x_0,r}^2\leq D.
\end{equation}
After the change of variables $x=x_0+ry$, the desired lower bound in \eqref{eq:affine-moments} is equivalent to
$$
\int_{B_1}u_{x_0,r}^2(y)|a+rb\cdot y|^2\geq c_0(|a|^2+r^2|b|^2).
$$
If $|a|^2+r^2|b|^2=0$, there is nothing to prove. Otherwise, dividing
the coefficients $a$ and $rb$ by $\sqrt{|a|^2+r^2|b|^2}$, and denoting the normalized coefficients again by $a$ and $b$, it is enough to prove
$$
\int_{B_1}u_{x_0,r}^2(y)|a+b\cdot y|^2\geq c_0,
$$
among coefficients with $|a|^2+|b|^2=1$. Let us suppose by contradiction that such a bound does not hold true. Then, there exist normalized solutions $u_k$, and coefficients $a_k,b_k$ with $|a_k|^2+|b_k|^2=1$ such that
$$
 \int_{B_1}u_k^2=1,\quad \int_{B_2}u_k^2\le D,\quad
 \int_{B_1}u_k^2|a_k+b_k\cdot y|^2\longrightarrow0.
$$
By \eqref{eq:E1}, with any fixed exponent $\eta\in(0,1)$, the sequence
$u_k$ is uniformly bounded in $C^{1,\eta}(B_{3/2})$. Hence, after
passing to a subsequence, $u_k$ converges uniformly in $\overline{B_1}$ to a continuous $u_\infty$ with $\int_{B_1}u_\infty^2=1$.
Also $(a_k,b_k)\to(a_\infty,b_\infty)$ with $|a_\infty|^2+|b_\infty|^2=1$. Passing to the limit, by continuity and nonnegativity of the integrand, it gives $u_\infty(a_\infty+b_\infty\cdot y)=0$ in $B_1$. The limiting denominator is nonzero on some open ball since $\int_{B_1}u_\infty^2=1$, so that the affine polynomial vanishes on that ball and hence is identically zero. This contradicts the coefficient normalization and proves the uniform lower bound.
\end{proof}

In the next result, we provide quantitative control of the coefficients of the best affine approximation, together with control of their variation across scales.

\begin{Lemma}\label{lem:affine-comparison}
The following estimates hold:
\begin{itemize}
\item[$i)$] for every $x_0\in B_1$, $0<r\le1/4$, and $\beta\in(0,1)$,
\begin{equation}\label{eq:fit-size}
 |a_{x_0,r}-w(x_0)|+r|b_{x_0,r}|\le C_\beta\,\mathcal N(u,v)\,r^\beta,
\end{equation}
where $C_\beta=C(n,\lambda,\Lambda,L,D,\beta)>0$;

\item[$ii)$] for every $x_0\in B_{3/2}$, $0<r\le1/4$, and $r\le s\le\min\{2r,1/4\}$, one has
\begin{equation}\label{eq:fit-difference}
 |a_{x_0,r}-a_{x_0,s}|
 +r|b_{x_0,r}-b_{x_0,s}|\le C\bigl(\mathcal E_1(x_0,r)+\mathcal E_1(x_0,s)\bigr),
 \end{equation}
where $C=C(n,\lambda,\Lambda,L,D)>0$;
 
 \item[$iii)$] for every fixed $\rho\in(0,1/4]$, there exists $C=C(n,D,\rho)>0$ such that for every $x_0\in B_{3/2}$ and for every $\rho\le r\le1/4$
 \begin{equation}\label{eq:uniform-bound-affine}
 \mathcal E_1(x_0,r)\leq C \mathcal N(u,v).
 \end{equation}
\end{itemize}
\end{Lemma}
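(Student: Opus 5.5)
All three items reduce to Lemma \ref{lem:affine-moments}, which lets us pass from weighted $L^2(d\mu)$ norms of affine functions on $B_r(x_0)$ to their coefficients. The recurring mechanism is Minkowski's inequality, adding and subtracting $w$, exactly as in Lemma \ref{lem:constant-fit}.

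For $ii)$, let $r\le s\le\min\{2r,1/4\}$. Apply the lower bound of \eqref{eq:affine-moments} on $B_r(x_0)$ to the affine function $\ell_{x_0,r}-\ell_{x_0,s}$. Both fits are expanded about the same center $x_0$, so its coefficients are $a_{x_0,r}-a_{x_0,s}$ and $b_{x_0,r}-b_{x_0,s}$. This gives
\[
c_0^{1/2}\bigl(|a_{x_0,r}-a_{x_0,s}|^2+r^2|b_{x_0,r}-b_{x_0,s}|^2\bigr)^{1/2}\le\Bigl(\fint_{B_r(x_0)}|\ell_{x_0,r}-\ell_{x_0,s}|^2d\mu\Bigr)^{1/2}.
\]
Adding and subtracting $w$ bounds the right-hand side by $\mathcal E_1(x_0,r)+\bigl(\mu(B_s(x_0))/\mu(B_r(x_0))\bigr)^{1/2}\mathcal E_1(x_0,s)$. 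Since $s\le2r\le1/2$, doubling \eqref{eq:doubling} bounds the measure ratio by $D$, and $ii)$ follows.

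For $iii)$, take the competitor $\ell=0$. Then $\mathcal E_1(x_0,r)\le\|w\|_{L^2(B_2,d\mu)}\mu(B_r(x_0))^{-1/2}$, and Lemma \ref{lem:mass} gives $\mu(B_r(x_0))\ge\mu(B_\rho(x_0))\ge c_\rho\mu(B_1)$ for $r\ge\rho$.

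For $i)$ we use the continuous extension of $w$ and the estimate \eqref{eq:holder-scaled} of Remark \ref{rem:holder-scaled} with exponent $\beta$. For $x_0\in B_1$ and $r\le1/4$ we have $B_r(x_0)\subset B_{5/4}$. There $|w(x)-w(x_0)|\le C_\beta\mathcal N(u,v)r^\beta$, so the constant competitor $w(x_0)$ gives
\[
\mathcal E_1(x_0,r)\le\Bigl(\fint_{B_r(x_0)}|w-w(x_0)|^2d\mu\Bigr)^{1/2}\le C_\beta\mathcal N(u,v)r^\beta .
\]
Now apply the lower bound of Lemma \ref{lem:affine-moments} to $\ell_{x_0,r}-w(x_0)$, whose coefficients are $a_{x_0,r}-w(x_0)$ and $b_{x_0,r}$. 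Adding and subtracting $w$,
\[
c_0^{1/2}\bigl(|a_{x_0,r}-w(x_0)|+r|b_{x_0,r}|\bigr)\lesssim\mathcal E_1(x_0,r)+\Bigl(\fint_{B_r(x_0)}|w-w(x_0)|^2d\mu\Bigr)^{1/2}\le C_\beta\mathcal N(u,v)r^\beta .
\]

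The only point needing care is that $\mu$-averages of $w$ refer to the continuous representative. This is harmless, because $Z(u)$ is $\mu$-null and $w$ coincides with its extension on $\{u\ne0\}$. Otherwise the argument is routine; the substantive input is the uniform constant $c_0$ of Lemma \ref{lem:affine-moments} together with the $C^{0,\beta}$ estimate, which is why $C_\beta$ depends on $\lambda,\Lambda,L$ as well.
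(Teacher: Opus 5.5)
Your proposal is correct and follows the paper's proof essentially step by step. Items ii) and iii) are identical to the paper's argument. In i), you bound $\|\ell_{x_0,r}-w(x_0)\|_{L^2(B_r(x_0),d\mu)}$ by the triangle inequality together with the constant competitor $w(x_0)$. The paper instead notes that $\ell_{x_0,r}-w(x_0)$ is the orthogonal projection of $w-w(x_0)$ onto $\mathcal P_1$, so its norm is at most $\|w-w(x_0)\|_{L^2(B_r(x_0),d\mu)}$; this only saves a harmless factor $2$.
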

\begin{proof}
Let us prove point $i)$. By Lemma \ref{lem:affine-minimizer}, $\ell_{x_0,r}$ is the orthogonal
projection of $w$ onto the space of affine polynomials $\mathcal P_1$ in $L^2(B_r(x_0),d\mu)$. Since constants belong to this space and the orthogonal projection is linear, the projection of $w-w(x_0)$ is $\ell_{x_0,r}-w(x_0)$. Hence, by the contraction property of orthogonal projections, and the $\beta$-H\"older bound of $w$ for any $\beta\in(0,1)$ in \eqref{eq:holder-scaled} (notice that $B_r(x_0)\subset B_{5/4}$),
$$
\fint_{B_r(x_0)}|\ell_{x_0,r}-w(x_0)|^2\,d\mu\le\fint_{B_r(x_0)}|w-w(x_0)|^2\,d\mu\le C\mathcal N(u,v)^2r^{2\beta}.
$$
Applying the lower bound in \eqref{eq:affine-moments} of Lemma \ref{lem:affine-moments} to the affine polynomial $\ell_{x_0,r}-w(x_0)=a_{x_0,r}-w(x_0)+b_{x_0,r}\cdot(x-x_0)$
gives
$$
c_0(|a_{x_0,r}-w(x_0)|^2+r^2|b_{x_0,r}|^2)\le C \mathcal N(u,v)^2r^{2\beta},
$$
and therefore \eqref{eq:fit-size}.

For point $ii)$, we want to estimate the distance between two affine fits at different scales

\begin{align*}
 \sqrt{c_0(|a_{x_0,r}-a_{x_0,s}|^2+r^2|b_{x_0,r}-b_{x_0,s}|^2)}
 &\le\left(\fint_{B_r(x_0)}|\ell_{x_0,r}-\ell_{x_0,s}|^2 \,d\mu\right)^{1/2}\\
 &\le \mathcal E_1(x_0,r)
   +\left(\frac{\mu(B_{s}(x_0))}{\mu(B_r(x_0))}\right)^{1/2}
       \mathcal E_1(x_0,s)\\
 &\le \mathcal E_1(x_0,r)+\sqrt D\,\mathcal E_1(x_0,s).
\end{align*}
In the previous inequalities, we used the lower bound in \eqref{eq:affine-moments}, we added and subtracted $w$ in the integral, we used the Minkowski inequality, the assumption $r\leq s$ and the definition of the excess. Then, by the assumption $s\leq 2r$, and the doubling inequality from scale $2r$ to $r$, \eqref{eq:fit-difference} follows.

For the last point $iii)$, choosing the competitor $\ell=0$ in the minimization \eqref{eq:affine-excess}, we get
\begin{equation*}
\mathcal E_1(x_0,r)\le\frac{\|w\|_{L^2(B_2,d\mu)}}{\mu(B_r(x_0))^{1/2}}.
\end{equation*}
Then, Lemma \ref{lem:mass} and $r\geq\rho$ give 
$$\mu(B_r(x_0))\geq\mu(B_\rho(x_0))\ge c_\rho\mu(B_1).$$
The latter gives \eqref{eq:uniform-bound-affine}.
\end{proof}

\subsection{Decay of the first-order excess}\label{sec:biharmonic}
The next result shows that the assumptions of Theorem \ref{thm:lip} imply a quantitative decay of the first-order excess.

\begin{Proposition}\label{prop:affine-excess}
Fix $r_0=1/4$. The assumptions of Theorem \ref{thm:lip} imply that, for every $\alpha\in(0,1)$, there is $C_\alpha=C(n,\lambda,\Lambda,L,D,\alpha)>0$ such that for any $x_0\in B_1$ and any $0<r\le r_0$
\begin{equation}\label{eq:affine-excess-bound}
 \mathcal E_1(x_0,r)\le C_\alpha\mathcal N(u,v)r^{1+\alpha}.
\end{equation}
\end{Proposition}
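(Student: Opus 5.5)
We argue by contradiction exactly as in the proof of Proposition \ref{prop:constant-excess}, with affine fits in place of constant fits. After normalizing $\|u\|_{L^2(B_1)}=\|v\|_{L^2(B_2)}=1$, we define
$$
\Theta(\rho)=\sup_{A,u,v}\sup_{x_0\in B_1}\sup_{\rho\le r\le r_0}\frac{\mathcal E_1(x_0,r)}{r^{1+\alpha}}.
$$
This quantity is finite for each $\rho>0$ by Lemma \ref{lem:affine-comparison} $iii)$. Assuming $\Theta(\rho)\to\infty$, we select $(A_k,u_k,v_k)$, $x_k\in B_1$, $r_k\to0$ with $\sigma_k=\mathcal E_1(x_k,r_k)$ and $M_k=\sigma_k/r_k^{1+\alpha}\to\infty$. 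By construction, $\mathcal E_1(x_k,s)\le 2M_ks^{1+\alpha}$ for all $s\in[r_k,r_0]$.

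We blow up via
$$
\tilde u_k=u_k(x_k+r_k\cdot)/\kappa_k,\qquad \tilde v_k=\frac{(v_k-\ell_ku_k)(x_k+r_k\cdot)}{\kappa_k\sigma_k},
$$
with $\ell_k=\ell_{x_k,r_k}$. The normal equations \eqref{eq:affine-normal-equations} give $\|\tilde u_k\|_{L^2(B_1)}=\|\tilde v_k\|_{L^2(B_1)}=1$ and $\tilde v_k\perp \tilde u_k\mathcal P_1$ in $L^2(B_1)$. Zero set inclusion is preserved. The growth bounds follow by telescoping Lemma \ref{lem:affine-comparison} $ii)$ along dyadic scales $s_i=2^ir_k$, as in \eqref{eq:geometricsum}. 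The constants $a$ satisfy $|a_{k,0}-a_{k,J}|\le CM_k(Rr_k)^{1+\alpha}$. For the slopes, $s_i|b_{k,i}-b_{k,i+1}|\le CM_ks_i^{1+\alpha}$, so $|b_{k,0}-b_{k,J}|\le CM_k(Rr_k)^\alpha$. Hence on $B_{Rr_k}(x_k)$ the affine difference is at most $CM_k(Rr_k)^{1+\alpha}$. This yields $\|\tilde v_k\|_{L^2(B_R)}\le CR^{1+\alpha}\|\tilde u_k\|_{L^2(B_R)}$, while $\int_{B_R}\tilde u_k^2\le DR^{\log_2 D}$ holds as before.

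The main obstacle is compactness of $\tilde v_k$, because $v_k-\ell_ku_k$ no longer solves $L_{A_k}$. Indeed, $L_{A_k}(\ell u)=2A_k\nabla\ell\cdot\nabla u+\div(A_k b)\,u$, so after rescaling
$$
L_{\tilde A_k}\tilde v_k=-\frac{r_k}{\sigma_k}\bigl(2\tilde A_kb_k\cdot\nabla\tilde u_k+r_k\div(\tilde A_k) b_k\cdot \tilde u_k\bigr),
$$
and $r_k|b_k|/\sigma_k$ need not be bounded. This is why one passes to a fourth order equation. Freeze coefficients at $A_0=\tilde A_k(0)$ and let $L_0=\div(A_0\nabla\cdot)$. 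Since $L_0\tilde u_k$ is small, of order $r_k$, and $L_0(b\cdot x\,\tilde u_k)=2A_0b\cdot\nabla\tilde u_k+b\cdot x\,L_0\tilde u_k$, one gets $L_0^2(b\cdot x\,U)=0$ whenever $L_0U=0$. So I would construct a replacement $h_k$ solving $L_{A_0}^2h_k=0$ on $B_R$ with the data of $\tilde v_k$, for instance by replacing $\tilde u_k$ by its $L_{A_0}$-harmonic replacement $\bar u_k$ and $\tilde v_k$ by the corresponding combination. The coefficient errors are $O(r_k)$ in $C^{0,1}$, and using the $C^{1,\eta}$ estimates \eqref{eq:E1} they are multiplied at worst by $r_k|b_k|/\sigma_k\cdot r_k$. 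By \eqref{eq:fit-size}, $r_k|b_k|\le Cr_k^\beta$ with $\beta$ close to $1$, while $\sigma_k\ge r_k^{1+\alpha}$. So the error is $r_k^{1+\beta}/r_k^{1+\alpha}\to0$ once we choose $\beta>\alpha$. This is where Remark \ref{rem:holder-scaled} enters.

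Caccioppoli and the smoothness estimates for constant coefficient biharmonic functions in Appendix \ref{app:polyharmonic} give local uniform compactness of $h_k$, hence of $\tilde v_k$. The limits satisfy $L_{A_0}U=0$ and $L_{A_0}^2V=0$, together with the limit normalization, the orthogonality $V\perp U\mathcal P_1$ on $B_1$, and $\|V\|_{L^2(B_R)}\le CR^{1+\alpha}\|U\|_{L^2(B_R)}$. The nodal inclusion $Z(U)\subseteq Z(V)$ transfers by the sign-change argument of Step 5, which uses only the equation for $U$ and uniform convergence. After $y=A_0^{-1/2}x$, Theorem \ref{thm:app-liouville} with $m=0$ makes $\tilde U=Q$ harmonic of degree $N$. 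With $m=1$, it makes $\tilde V=P$ a biharmonic polynomial of degree at most $\lfloor N+1+\alpha\rfloor=N+1$. Murdoch's Lemma \ref{lem:murdoch} gives $P=QR$ with $\deg R\le1$, hence $V=U\tilde R$ with $\tilde R\in\mathcal P_1$. Then orthogonality gives $1=\int_{B_1}V^2=\int_{B_1}VU\tilde R=0$, a contradiction.
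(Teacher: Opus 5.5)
Your proposal is correct and follows the paper's proof essentially step by step. It uses the same contradiction and blow-up, the same telescoped growth bounds, and a frozen-coefficient biharmonic replacement whose error is killed by the $C^{0,\beta}$ bound with $\beta>\alpha$ from Remark \ref{rem:holder-scaled}; it then concludes with Caccioppoli compactness and the Liouville--Murdoch contradiction.

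The one point worth making explicit is that your ``corresponding combination'' is the paper's $G_{k,R}=h_k^v-(b_k\cdot x)h_k^u$. Here $b_k=\frac{r_k}{\sigma_k}b_{x_k,r_k}$, and $h_k^v$ is the $L_0$-harmonic replacement of the exactly $L_{\tilde A_k}$-harmonic function $v_k^*=\tilde v_k+(b_k\cdot x)\tilde u_k$. Introducing $v_k^*$ is what makes the replacement biharmonic, with the Dirichlet data of $\tilde v_k$.
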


\begin{proof}
Without loss of generality $v$ is nontrivial in $B_2$. Replace $u$ by $u/\|u\|_{L^2(B_1)}$ and $v$ by $v/\|v\|_{L^2(B_2)}$. The new quotient is $w/\mathcal N(u,v)$ and the new excess is $\mathcal E_1/\mathcal N(u,v)$. The equations, zeros, and the doubling
constant $D$ are unchanged. We prove the estimate for these normalized
pairs, for which $\mu(B_1)=\|w\|_{L^2(B_2,d\mu)}^2=\|v\|_{L^2(B_2)}^2=1$, using
the fixed radius $r_0=1/4$ throughout this proof. Fix $\beta=(1+\alpha)/2\in(\alpha,1)$. Remark \ref{rem:holder-scaled} gives
\begin{equation}\label{eq:first-initial-holder}
 \|w\|_{C^{0,\beta}(B_{5/4})}\le C_\beta,
 \qquad C_\beta=C(n,\lambda,\Lambda,L,D,\beta).
\end{equation}
Every ball with center in $B_1$ and radius at most $r_0=1/4$ lies in $B_{5/4}$.\\

{\bf Step 1: the contradiction argument and the choice of the scale.}
Let us suppose by contradiction that \eqref{eq:affine-excess-bound} does not hold among normalized solutions. Define
\begin{equation*}
\Theta(\rho)=\sup_{A,u,v}\sup_{x_0\in B_1}\sup_{\rho\leq r\leq r_0}\frac{\mathcal E_1(x_0,r)}{r^{1+\alpha}}.
\end{equation*}
The first supremum is taken over all normalized triples satisfying the fixed bounds $n,\lambda,\Lambda,L,D$. Lemma \ref{lem:affine-comparison} point $iii)$ implies that for any fixed $\rho\in(0,r_0]$ the quantity $\Theta(\rho)$ is finite. So the contradiction implies that $\Theta(\rho)\to\infty$ as $\rho\to0$.

As $k\to\infty$, let us choose triples $(A_k,u_k,v_k)$ and sequences $\rho_k\to0$, $r_k\in[\rho_k,r_0]$, $x_k\in B_1$ such that
\begin{equation*}
\frac{\mathcal E_1(x_k,r_k)}{r_k^{1+\alpha}}\geq \frac{\Theta(\rho_k)}{2}.
\end{equation*}
This is possible by definition of the supremum. Set
$$
 \sigma_k=\mathcal E_1(x_k,r_k),\qquad M_k=\sigma_k/r_k^{1+\alpha}.
$$
Then $M_k\to\infty$. Moreover, for any $s\in[r_k,r_0]$ we have
\begin{equation}\label{eq:scale-ss}
\mathcal E_1(x_k,s)\leq 2M_ks^{1+\alpha}.
\end{equation}
In fact $s\geq r_k\geq\rho_k$ and then $\mathcal E_1(x_k,s)/s^{1+\alpha}\leq \Theta(\rho_k)\leq 2M_k$. Notice that $r_k\to0$. In fact, if this is not the case we get for a subsequence $r_k\geq\overline\rho>0$, which implies
$$M_k=\frac{\mathcal E_1(x_k,r_k)}{r_k^{1+\alpha}}\leq\Theta(\overline\rho),$$
and the finiteness of $\Theta(\overline\rho)$ is in contradiction with $M_k\to\infty$.\\

{\bf Step 2: blow-up sequences and normalization.}
Let us denote by $\ell_k$ the best affine approximation at $x_k$ and scale $r_k$; that is, $\ell_k(x)=\ell_{x_k,r_k}(x)=a_{x_k,r_k}+b_{x_k,r_k}\cdot(x-x_k)$. Then, rescaled coefficients and blow-up sequences are given by

\begin{align}
 \kappa_k^2&=\int_{B_1}u_k^2(x_k+r_kx),\qquad \tilde A_k(x)=A_k(x_k+r_kx),\notag\\
 \tilde u_k(x)&=\frac{u_k(x_k+r_kx)}{\kappa_k},\qquad\tilde v_k(x)=\frac{(v_k-\ell_{k}u_k)(x_k+r_kx)}{\kappa_k\sigma_k}.
 \label{eq:first-normalization}
\end{align}
These functions are defined on $\Omega_k=(B_2-x_k)/r_k$, which contains $B_{1/r_k}$. Thus every fixed ball $B_R$ with $R>0$ is contained in $\Omega_k$ for all sufficiently large $k$. Thus, the blow-up domain is $\R^n$. The equation for the sequence $\tilde u_k$ is preserved, as well as the nodal set inclusion; that is,
$$
 L_{\tilde A_k}\tilde u_k=0,\qquad  Z(\tilde u_k)\subseteq Z(\tilde v_k).
$$
As we will see, $\tilde v_k$ need not be $L_{\tilde A_k}$-harmonic.
The definition of the excess \eqref{eq:affine-excess} and the normal equations \eqref{eq:affine-normal-equations} for the affine fit give
\begin{equation}\label{eq:first-normal-equations}
 \int_{B_1}\tilde u_k^2=\int_{B_1}\tilde v_k^2=1,\qquad\int_{B_1}\tilde v_k\tilde u_k\ell=0\quad\forall\,\ell\in\mathcal P_1.
\end{equation}

{\bf Step 3: integral growth of the blow-up sequences.}
For $R=1$, the bounds below follow from \eqref{eq:first-normal-equations}. Fix $R>1$. Since $r_k\to0$, one has $Rr_k\leq r_0$ for every $k\geq \overline k(R)$. Then, let's consider $J\in\mathbb N\setminus\{0\}$ such that $2^{J-1}<R\leq 2^J$, and define $s_i=2^ir_k$ for any $i=0,\dots,J-1$ and $s_J=Rr_k$.

Write
$\ell_{k,i}(y)=a_{k,i}+b_{k,i}\cdot(y-x_k)$ for the best affine fit on $B_{s_i}(x_k)$; that is, $\ell_{k,i}=\ell_{x_k,s_i}$ with $a_{k,i}=a_{x_k,s_i}$ and $b_{k,i}=b_{x_k,s_i}$. In particular,
$\ell_{k,0}=\ell_{x_k,r_k}$.

By \eqref{eq:scale-ss}, we have
\begin{equation}\label{eq:scale-ss2}
\mathcal E_1(x_k,s_i)\leq 2M_ks_i^{1+\alpha}.
\end{equation}

Since $s_i\le s_{i+1}\le\min\{2s_i,r_0\}$, Lemma \ref{lem:affine-comparison} point $ii)$ and \eqref{eq:scale-ss2} imply for any $i=0,\dots,J-1$
$$
 |a_{k,i}-a_{k,i+1}|+s_i|b_{k,i}-b_{k,i+1}|\le C M_k s_i^{1+\alpha}.
$$
For $y\in B_{Rr_k}(x_k)$ and $i=0,\dots,J-1$ we have
$$
|\ell_{k,i}(y)-\ell_{k,i+1}(y)|\leq  |a_{k,i}-a_{k,i+1}|+|b_{k,i}-b_{k,i+1}|\cdot|y-x_k|\leq CM_k (s_i^{1+\alpha}+Rr_ks_i^\alpha).
$$
Hence, recalling that $2^{J-1}<R$, summation as in \eqref{eq:geometricsum} gives
\begin{equation*}
 |\ell_{k,0}(y)-\ell_{k,J}(y)|\le C M_k\sum_{i=0}^{J-1}\bigl(s_i^{1+\alpha}+Rr_ks_i^\alpha\bigr)\le C_\alpha M_k(Rr_k)^{1+\alpha}.
\end{equation*}

Then, in $B_{Rr_k}(x_k)$, adding and subtracting $\ell_{k,J}u_k$, and by the definition of the excess at scale $Rr_k$, we get
\begin{eqnarray*}
\|v_k-\ell_{k,0}u_k\|_{L^2(B_{Rr_k}(x_k))}&\leq& \big(\mathcal E_1(x_k,Rr_k)+\sup_{B_{Rr_k}(x_k)}|\ell_{k,0}(y)-\ell_{k,J}(y)|\big)\|u_k\|_{L^2(B_{Rr_k}(x_k))}\\
&\leq& C_{\alpha} M_k(Rr_k)^{1+\alpha}\|u_k\|_{L^2(B_{Rr_k}(x_k))}.
\end{eqnarray*}
After changing variables and dividing by $\kappa_k\sigma_k=\kappa_kM_kr_k^{1+\alpha}$, we have
\begin{equation}\label{eq:first-growth}
\|\tilde v_k\|_{L^2(B_R)}\le C_\alpha R^{1+\alpha}\|\tilde u_k\|_{L^2(B_R)}.
\end{equation}
The constant depends only on the fixed data and $\alpha$, and is independent of $R,k$.

As in the proof of Proposition \ref{prop:constant-excess}, iterating the doubling inequality $N_0=\lceil\log_2R\rceil$ times gives
\begin{equation}\label{eq:denominator-growth}
 \int_{B_R}\tilde u_k^2\le D^{\lceil\log_2R\rceil}\le D R^{\log_2D}.
\end{equation}
Consequently $\tilde u_k,\tilde v_k$ are bounded in $L^2$ on every fixed ball.\\

{\bf Step 4: compactness and the limiting equations.}
Compared with {\bf Step 4} in the proof of Proposition \ref{prop:constant-excess}, an additional argument is needed here, since the functions $\tilde v_k$ do not solve the same equation as $\tilde u_k$. 

Fix once and for all an exponent $\eta\in(0,1)$, for instance $\eta=1/2$. Consider a subsequence with $\tilde A_k(0)\to A_0$. For every fixed $R\ge1$,
\begin{equation}\label{eq:first-frozen-coefficients}
 \|\tilde A_k-\tilde A_k(0)\|_{C^{0,\eta}(B_{2R})}\le C_Rr_k.
\end{equation}
Indeed, using the Lipschitz continuity of $A_k$, the supremum is at most $2LRr_k$ and the $\eta$-H\"older seminorm is at most $Lr_k(4R)^{1-\eta}$.

Let us consider the normalized linear part of the affine fit $\ell_k$; that is,
$$
b_k=\frac{r_k}{\sigma_k}b_{x_k,r_k},
$$
and set
$$
v_k^*=\tilde v_k+(b_k\cdot x)\tilde u_k=\frac{(v_k-a_{x_k,r_k}u_k)(x_k+r_kx)}{\kappa_k\sigma_k}.
$$
Notice that $L_{\tilde A_k}v_k^*=0$. By \eqref{eq:fit-size}, $r_k|b_{x_k,r_k}|\le C_\beta r_k^{\beta}$, and hence, since $\sigma_k=M_kr_k^{1+\alpha}$ and $\beta>\alpha$,
\begin{equation}\label{eq:first-smallness}
 r_k|b_k|\le C_\beta\frac{r_k^{\beta-\alpha}}{M_k}
       \longrightarrow0.
\end{equation}
Notice that the above vanishing does not imply that $b_k$ is bounded.

Fix $R\ge1$ and take $k$ large enough that $B_{4R}\subset\Omega_k$. The growth estimates in {\bf Step 3} and the identity for $v_k^*$ give
$$
 \|\tilde u_k\|_{L^2(B_{4R})}\le C_R,\qquad \|v_k^*\|_{L^2(B_{4R})}\le C_R(1+|b_k|).
$$
The interior estimate \eqref{eq:E1} therefore yields
\begin{equation}\label{eq:first-exact-Schauder}
\|\tilde u_k\|_{C^{1,\eta}(B_{2R})}\le C_R,\qquad \|v_k^*\|_{C^{1,\eta}(B_{2R})}\le C_R(1+|b_k|).
\end{equation}
For simplicity, we write $L_0\cdot=\mathrm{div}(\tilde A_k(0)\nabla\cdot)$. Let us now consider the $L_0$-harmonic replacements $h_k^u,h_k^v$ of $\tilde u_k,v_k^*$ in $B_{2R}$; that is, the unique solutions of
$$
 \begin{cases}
 L_{0}h_k^u=0&\text{in }B_{2R},\\
 h_k^u=\tilde u_k&\text{on }\partial B_{2R},
 \end{cases}
 \qquad
 \begin{cases}
 L_{0}h_k^v=0&\text{in }B_{2R},\\
 h_k^v=v_k^*&\text{on }\partial B_{2R}.
 \end{cases}
$$
Equivalently, the errors $e_k^u=\tilde u_k-h_k^u$ and $e_k^v=v_k^*-h_k^v$ are the unique weak solutions in $H^1_0(B_{2R})$ of
$$
 -L_{0}e_k^u=\div\bigl((\tilde A_k-\tilde A_k(0))\nabla\tilde u_k\bigr),\qquad
 -L_{0}e_k^v=\div\bigl((\tilde A_k-\tilde A_k(0))\nabla v_k^*\bigr) \quad\text{in }B_{2R}.
$$
Existence and uniqueness follow from the Lax-Milgram theorem. Notice that $h_k^u,h_k^v,e_k^u,e_k^v$ depend on the choice of the radius $R$. Applying \eqref{eq:E2}, and taking into account \eqref{eq:first-frozen-coefficients} and \eqref{eq:first-exact-Schauder}, we have
\begin{equation}\label{eq:first-replacement-errors}
\|e_k^u\|_{C^{1,\eta}(B_{2R})}\le C_Rr_k,\qquad \|e_k^v\|_{C^{1,\eta}(B_{2R})}\le C_Rr_k(1+|b_k|).
\end{equation}
Define
$$
 G_{k,R}=h_k^v-(b_k\cdot x)h_k^u.
$$
Both $h_k^u,h_k^v$ are $L_0$-harmonic, and $b_k\cdot x$ is linear. Since $\tilde A_k(0)$ is constant, we have in $B_{2R}$
\begin{align}
 L_{0}G_{k,R}
   &=-2(\tilde A_k(0)b_k)\cdot\nabla h_k^u,\notag\\
 L_{0}^2G_{k,R}
   &=-2(\tilde A_k(0)b_k)\cdot\nabla(L_{0}h_k^u)=0.
 \label{eq:first-biharmonic-replacement}
\end{align}
In other words, $G_{k,R}$ is $L_0$-biharmonic in $B_{2R}$. Notice that $\tilde v_k-G_{k,R}=e_k^v-(b_k\cdot x)e_k^u$, so that \eqref{eq:first-replacement-errors} and \eqref{eq:first-smallness} imply
\begin{equation}\label{eq:biharmonic-error}
\|\tilde v_k-G_{k,R}\|_{L^\infty(B_{2R})}\le C_Rr_k(1+|b_k|)\longrightarrow0.
\end{equation}
Notice that in {\bf Step 3} we obtained the local uniform boundedness in $L^2$ for the sequence $\tilde v_k$; that is,
$$
\|\tilde v_k\|_{L^2(B_{2R})}\leq C_R.
$$
The latter together with \eqref{eq:biharmonic-error} imply that $G_{k,R}$ is uniformly
bounded in $L^2(B_{2R})$. Remark \ref{rem:Hj}, with $m=1$ and matrix $\tilde A_k(0)$, applied on $B_R\subset B_{2R}$, gives, for every integer $j\ge1$,
\begin{equation*}
\|G_{k,R}\|_{H^j(B_R)}\le C_{R,j}\|G_{k,R}\|_{L^2(B_{2R})}\le C_{R,j}.
\end{equation*}
Choosing $j>n/2$, the compact Sobolev embedding and \eqref{eq:biharmonic-error} give compactness of $\tilde v_k$ in $C^0(B_R)$. For $\tilde u_k$, compactness follows directly from \eqref{eq:E1}.

Taking a diagonal subsequence over $R=1,2,\ldots$, we obtain
$$
 \tilde u_k\to U,\qquad \tilde v_k\to V \quad\text{locally uniformly and strongly in }L^2_\loc(\R^n).
$$
Notice that \eqref{eq:biharmonic-error} implies also that $G_{k,R}\to V$ uniformly in $B_R$. The equation for $U$ follows by passing to the limit in the weak equation for $\tilde u_k$. Let us prove that $V$ is biharmonic. For $\varphi\in C_c^\infty(B_R)$, \eqref{eq:first-biharmonic-replacement} gives
$$
 0=\int_{B_R}G_{k,R}L_{\tilde A_k(0)}^2\varphi
   \longrightarrow\int_{B_R}VL_{A_0}^2\varphi.
$$
Since $R$ is arbitrary, the limiting equations are
\begin{equation}\label{eq:first-limit-equations}
 L_{A_0}U=0,\qquad L_{A_0}^2V=0 \qquad\text{in }\R^n.
\end{equation}
The normalizations and the growth bounds also pass to the limit:
\begin{gather}
\int_{B_1}U^2=\int_{B_1}V^2=1,\qquad\int_{B_1}VU\ell=0\quad\forall\,\ell\in\mathcal P_1,
 \label{eq:first-limit-normalization}\\
\int_{B_R}U^2\le D R^{\log_2D},\qquad\|V\|_{L^2(B_R)}\le C_\alpha R^{1+\alpha}\|U\|_{L^2(B_R)}\quad \forall R\ge1.\notag
\end{gather}

{\bf Step 5: transfer of the nodal inclusion.}
This step is identical to {\bf Step 5} of the proof of Proposition \ref{prop:constant-excess}. We conclude with the same argument that
$$
 Z(U)\subseteq Z(V).
$$

{\bf Step 6: contradiction by polynomial Liouville theorems and Murdoch's division lemma.}

Apply the change $x=Sy$, where $S=A_0^{1/2}$. Then 
$\tilde U(y)=U(Sy)$ is harmonic and $\tilde V(y)=V(Sy)$ is biharmonic in $\R^n$. Moreover, they retain similar $L^2$-integral growth conditions, with the same exponents. In particular
\begin{equation}\label{eq:polynomialbound-tildeU2}
\|\tilde U\|_{L^2(B_R)}\leq CR^{\frac12\log_2D}.
\end{equation}
Theorem \ref{thm:app-liouville}, with $m=0$, therefore implies that $\tilde U$ is a harmonic polynomial $Q$. Set $N=\deg Q$. Here the growth exponent in Theorem \ref{thm:app-liouville} is $\gamma=(\log_2D-n)/2\ge0$ (recall that $D\geq 2^n$). In particular, $N\le\lfloor\gamma\rfloor$.

Set $c_+=\max\{1,\sqrt\Lambda\}$, $c_-=\min\{1,\sqrt\lambda\}$ and $\tilde c=c_+/c_-$.
The inclusions $SB_R\subset B_{c_+R}$ and $S^{-1}B_{c_+R}\subset B_{\tilde cR}$, together with change of variables, give for any $R\geq1$
\begin{equation}\label{eq:changed-relative-growth2}
 \|\tilde V\|_{L^2(B_R)}\le C R^{1+\alpha}\|\tilde U\|_{L^2(B_{\tilde cR})}.
\end{equation}
Here $C$ depends only on the original growth constant, $\alpha,\lambda,\Lambda$.
Since $\tilde U=Q$ is a polynomial of degree $N$, its last norm is bounded by $CR^{N+n/2}$, with $C$ independent of $R$. Then,
\begin{equation*}
\|\tilde V\|_{L^2(B_{R})}\leq CR^{1+\alpha+N+\frac{n}{2}}.
\end{equation*}
Theorem \ref{thm:app-liouville} with $m=1$ implies that $\tilde V$ is a biharmonic polynomial $P$ of degree $\deg P\leq\lfloor 1+\alpha+N\rfloor$. Since $\alpha\in(0,1)$, then $\lfloor 1+\alpha+N\rfloor=1+N$. Hence $\deg P\leq\deg Q +1$.

The applied linear change of coordinates does preserve the zero set inclusion; that is, $Z(Q)\subseteq Z(P)$. Since $Q$ is a nontrivial harmonic polynomial, Murdoch's division Lemma \ref{lem:murdoch} yields $P=QR$ for some polynomial $R$. The degree inequality $\deg P\leq\deg Q+1$ implies that $R$ is affine; that is, $R\in\mathcal P_1$.

Returning to the original coordinates yields $V=U\tilde R$ in $\R^n$, for some $\tilde R\in\mathcal P_1$. Indeed, $\tilde R=R\circ S^{-1}$. Notice that the composition with an invertible linear map preserves both polynomiality and polynomial degree. The orthogonality and the normalizations in \eqref{eq:first-limit-normalization} therefore give a contradiction
$$
 0=\int_{B_1}VU\tilde R=\int_{B_1}V^2=1.
$$
The contradiction bounds $\Theta(\rho)$ uniformly as $\rho\to0$. Restoring the independent normalizations of $u$ and $v$ proves \eqref{eq:affine-excess-bound}.
\end{proof}

\subsection{Proof of Theorem \ref{thm:lip}}\label{sec:lip-completion}
We finally complete the proof of the $C^{1,\alpha}$ estimate.
\begin{proof}[Proof of Theorem \ref{thm:lip}]
Set $r_0=1/4$ and $K=C_\alpha\mathcal N(u,v)$, so that Proposition \ref{prop:affine-excess} gives
$$
\mathcal E_1(x_0,r)\leq Kr^{1+\alpha}\qquad\forall\,x_0\in B_1,\quad 0<r\leq r_0.
$$
Let
$$
r_j=2^{-j}r_0,\qquad j\in\mathbb N.
$$
By Lemma \ref{lem:affine-comparison} point $ii)$,
$$
|a_{x_0,r_j}-a_{x_0,r_{j+1}}|+r_j|b_{x_0,r_j}-b_{x_0,r_{j+1}}|\leq CKr_j^{1+\alpha}.
$$
Therefore $\{a_{x_0,r_j}\}_{j\in\mathbb N}$, $\{b_{x_0,r_j}\}_{j\in\mathbb N}$ are Cauchy sequences and we may define
$$
a(x_0)=\lim_{j\to\infty}a_{x_0,r_j},\qquad b(x_0)=\lim_{j\to\infty}b_{x_0,r_j}.
$$
Summing the corresponding geometric series gives
\begin{equation}\label{eq:affine-to-limit}
\begin{aligned}
|a(x_0)-a_{x_0,r_j}|&\le CK\sum_{l=j}^\infty r_l^{1+\alpha}=CKr_j^{1+\alpha}\sum_{m=0}^\infty (2^{-(1+\alpha)})^m\le CKr_j^{1+\alpha},\\
|b(x_0)-b_{x_0,r_j}|&\le CK\sum_{l=j}^\infty r_l^\alpha=CKr_j^\alpha\sum_{m=0}^\infty (2^{-\alpha})^m\le CKr_j^\alpha.
\end{aligned}
\end{equation}

Notice that $a(x_0)=w(x_0)$: indeed, by \eqref{eq:fit-size} with the choice $\beta=\alpha$, we know that
$$
 |a_{x_0,r_j}-w(x_0)|\le CKr_j^\alpha.
$$

Let $x_0,y_0\in B_1$ and set
$h=|x_0-y_0|$. Assume first that $0<h\leq r_0/2$, and choose $j\geq1$
such that $r_{j+1}<h\leq r_j$. We want to estimate the coefficients of
$$\ell_{y_0,r_j}(x)-\ell_{x_0,r_{j-1}}(x)=[a_{y_0,r_j}-a_{x_0,r_{j-1}}-b_{x_0,r_{j-1}}\cdot(y_0-x_0)]+(b_{y_0,r_j}-b_{x_0,r_{j-1}})\cdot(x-y_0),$$
seen as an affine polynomial centered at $y_0$.

Notice that $B_{r_j}(y_0)\subset B_{2r_j}(x_0)=B_{r_{j-1}}(x_0)$. Applying the lower bound in \eqref{eq:affine-moments} to the difference of the two affine polynomials, viewed as an affine polynomial centered at $y_0$, and then using Minkowski's inequality, we obtain

\begin{align*}
 \sqrt{c_0(|a_{y_0,r_j}-a_{x_0,r_{j-1}}-b_{x_0,r_{j-1}}\cdot(y_0-x_0)|^2+r_j^2|b_{y_0,r_j}-b_{x_0,r_{j-1}}|^2)}\\
 \le\left(\fint_{B_{r_j}(y_0)}|\ell_{y_0,r_j}-\ell_{x_0,r_{j-1}}|^2 \,d\mu\right)^{1/2}\\
 \le \mathcal E_1(y_0,r_j)
   +\left(\frac{\mu(B_{r_{j-1}}(x_0))}{\mu(B_{r_j}(y_0))}\right)^{1/2}
       \mathcal E_1(x_0,r_{j-1}).    
\end{align*}
By the excess estimate and the doubling inequality, since $B_{r_{j-1}}(x_0)=B_{2r_j}(x_0)\subset B_{4r_j}(y_0)$, we have
$$
|a_{y_0,r_j}-a_{x_0,r_{j-1}}-b_{x_0,r_{j-1}}\cdot(y_0-x_0)|+r_j|b_{y_0,r_j}-b_{x_0,r_{j-1}}|\leq CKr_j^{1+\alpha}.
$$
Then, by $r_j<2h$, we have
\begin{equation}\label{eq:cross-affine}
|a_{y_0,r_j}-a_{x_0,r_{j-1}}-b_{x_0,r_{j-1}}\cdot(y_0-x_0)|\leq CKh^{1+\alpha},\qquad |b_{y_0,r_j}-b_{x_0,r_{j-1}}|\leq CKh^{\alpha}.
\end{equation}
First, with the first inequality in \eqref{eq:cross-affine}, we prove differentiability of $w$ at $x_0$. Using \eqref{eq:affine-to-limit} and \eqref{eq:cross-affine}, we have
\begin{align*}
|w(y_0)-w(x_0)-b(x_0)\cdot(y_0-x_0)|
&\leq
|w(y_0)-a_{y_0,r_j}|+\big|a_{y_0,r_j}-a_{x_0,r_{j-1}}-b_{x_0,r_{j-1}}\cdot(y_0-x_0)\big|\\
&\quad+|a_{x_0,r_{j-1}}-w(x_0)|+|b_{x_0,r_{j-1}}-b(x_0)|\,|y_0-x_0|\\
&\leq CK h^{1+\alpha}.
\end{align*}
Consequently, considering $y_0\to x_0$, and hence $h\to0^+$, we obtain that $w$ is differentiable at $x_0$ and $\nabla w(x_0)=b(x_0)$.

Then, the second inequality in \eqref{eq:cross-affine} allows us to prove H\"older continuity for $\nabla w$. In fact, applying also \eqref{eq:affine-to-limit}, we have
$$
|\nabla w(x_0)-\nabla w(y_0)|\leq |b(x_0)-b_{x_0,r_{j-1}}|+|b(y_0)-b_{y_0,r_j}|+|b_{y_0,r_j}-b_{x_0,r_{j-1}}|\leq CKh^{\alpha}.
$$

It remains to prove the $L^\infty$ bound for the gradient and to consider
pairs of points at distance larger than $r_0/2$.

By \eqref{eq:fit-size}, applied at the fixed scale $r_0$ with choice $\beta=\alpha$, we have
$$
r_0|b_{x_0,r_0}|\leq CKr_0^\alpha,
$$
and therefore, applying \eqref{eq:affine-to-limit} for $j=0$, we obtain
$$
|\nabla w(x_0)|\leq|b(x_0)-b_{x_0,r_0}|+|b_{x_0,r_0}|\leq CK.
$$
Hence
\begin{equation}\label{eq:gradient-linfty}
\|\nabla w\|_{L^\infty(B_1)}\leq CK.
\end{equation}
Recall that the $L^\infty$ bound for $w$ follows from the previously proved
H\"older estimate \eqref{eq:holder-scaled}. 

We have already proved that, whenever
$|x_0-y_0|\leq r_0/2$,
$$
|\nabla w(x_0)-\nabla w(y_0)|\leq CK|x_0-y_0|^\alpha.
$$
Assume now that $h>r_0/2$. Then, by \eqref{eq:gradient-linfty}, since $r_0$ is fixed
$$
|\nabla w(x_0)-\nabla w(y_0)|\leq 2\|\nabla w\|_{L^\infty(B_1)}\leq CK\leq CK|x_0-y_0|^\alpha.
$$
Then, summing up all the contributions, \eqref{eq:lip-conclusion} follows.
\end{proof}

Notice that, by a fixed rescaling and localization, the $C^{1,\alpha}$ estimate can be extended to a slightly larger interior ball; that is, of radius $1<r^*<3/2$. For later purposes, we choose $r^*=5/4$.

\begin{remark}\label{rem:lip-scaled}
Under the assumptions of Theorem \ref{thm:lip}, for every $0<\alpha<1$,
\begin{equation}\label{eq:lip-scaled}
 \|w\|_{C^{1,\alpha}(B_{5/4})}\le C(n,\lambda,\Lambda,L,D,\alpha)\mathcal N(u,v).
\end{equation}
\end{remark}

\section{$C^{k,\alpha}$ estimates under $C^{k-1,\alpha}$ coefficients for $k\geq2$}\label{sec:higher-order}
The aim of this section is the proof of Theorem \ref{thm:higher}. Throughout the section, $d\in\mathbb N_{\geq2}=\mathbb N\setminus\{0,1\}$ denotes both the maximal degree of the polynomial fit and the target regularity order; in other words, $d=k$ in the notation of Theorem \ref{thm:higher}. Remark \ref{rem:lip-scaled} gives in particular the estimate
\begin{equation}\label{eq:higher-initial-lipschitz}
 \|w\|_{C^{0,1}(B_{5/4})}\le C\mathcal N(u,v).
\end{equation}
This is the preliminary regularity needed for the higher-order argument.

Throughout Section \ref{sec:higher-order}, fix an auxiliary exponent $\eta\in(0,1)$ for the replacement estimates, for instance $\eta=1/2$.
Unless otherwise specified, constants depend only on $n,\lambda,\Lambda,L,D,d$ and the indicated exponents.

\subsection{Best polynomial approximation}\label{sec:polynomial-fits}
For $x_0\in B_{3/2}$ and $0<r\le1/4$, define the $d$th-order excess
\begin{equation}\label{eq:polynomial-excess}
 \mathcal E_d(x_0,r)=\inf_{\pi\in\mathcal P_d}\left(\fint_{B_r(x_0)}|w-\pi|^2 \,d\mu\right)^{1/2}=\inf_{\pi\in\mathcal P_d}\left(\frac{1}{\mu(B_r(x_0))}\int_{B_r(x_0)}|v-\pi u|^2\right)^{1/2},
\end{equation}
where $\mathcal P_d$ is the space of polynomials of degree at most $d$.

The next lemma provides existence and uniqueness of the weighted polynomial fit.
\begin{Lemma}\label{lem:polynomial-minimizer}
For every center $x_0\in B_{3/2}$ and every radius $0<r\le1/4$, the minimization problem \eqref{eq:polynomial-excess} has a unique minimizer $\pi_{x_0,r}$. Moreover, it is characterized by the normal equations
\begin{equation}\label{eq:polynomial-normal-equations}
 \int_{B_r(x_0)}(w-\pi_{x_0,r})\pi \,d\mu=0\qquad\text{for every }\pi\in\mathcal P_d.
\end{equation}
\end{Lemma}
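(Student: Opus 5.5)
The plan is to repeat the Hilbert-space argument of Lemma \ref{lem:affine-minimizer}, replacing $\mathcal P_1$ by $\mathcal P_d$. The minimization \eqref{eq:polynomial-excess} is equivalent to
\[
\inf_{\pi\in\mathcal P_d}\|w-\pi\|_{L^2(B_r(x_0),d\mu)}.
\]
We have $w\in L^2(B_2,d\mu)$ with $\|w\|_{L^2(B_2,d\mu)}=\|v\|_{L^2(B_2)}$. Moreover, every polynomial is bounded on $B_r(x_0)$, and $\mu$ is finite there, so each $\pi\in\mathcal P_d$ defines an element of $L^2(B_r(x_0),d\mu)$. Hence the problem is a projection problem onto the image of $\mathcal P_d$ in this Hilbert space.

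The one point needing care is that this image is a faithful copy of $\mathcal P_d$, i.e.\ the map $\mathcal P_d\to L^2(B_r(x_0),d\mu)$ is injective. Suppose $\int_{B_r(x_0)}\pi^2u^2=0$. By Lemma \ref{lem:mass}, $\mu(B_r(x_0))>0$, so $u$ does not vanish identically on $B_r(x_0)$. Since $u$ is continuous (the interior estimate \eqref{eq:E1} gives $u\in C^{1,\eta}$), there is a smaller open ball $B'\subset B_r(x_0)$ on which $u\neq0$. Then $\pi^2u^2\ge0$ with zero integral forces $\pi=0$ a.e.\ on $B'$, hence everywhere on $B'$ by continuity. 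A polynomial vanishing on a nonempty open set is identically zero: all its derivatives vanish at an interior point, so all Taylor coefficients are zero. This is the only place where anything beyond linear algebra is used. It is no harder than in the affine case, since the argument never used the degree.

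The image is therefore a subspace of dimension $\dim\mathcal P_d=\binom{n+d}{d}$, which is finite, so it is closed. The Hilbert projection theorem then gives a unique element $\pi_{x_0,r}$ of this subspace at minimal distance from $w$. By injectivity it corresponds to a unique polynomial, so the minimizer is unique as an element of $\mathcal P_d$ and not merely $\mu$-a.e. The projection theorem also characterizes the minimizer by orthogonality of the residual $w-\pi_{x_0,r}$ to the whole subspace, which is exactly the system of normal equations \eqref{eq:polynomial-normal-equations}.

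For the converse, let $\pi_*$ satisfy \eqref{eq:polynomial-normal-equations}. For any $\pi\in\mathcal P_d$, Pythagoras gives
\[
\|w-\pi\|^2=\|w-\pi_*\|^2+\|\pi_*-\pi\|^2 .
\]
So $\pi_*$ is a minimizer, and by uniqueness $\pi_*=\pi_{x_0,r}$. This completes the characterization.
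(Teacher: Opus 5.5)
Your proposal is correct and follows essentially the same route as the paper: you show the map $\mathcal P_d\to L^2(B_r(x_0),d\mu)$ is injective using Lemma \ref{lem:mass} and the continuity of $u$, note that the image is closed because it is finite dimensional, and apply the Hilbert projection theorem to get existence, uniqueness and the normal equations. Your Pythagoras argument for the converse direction of the characterization is a small addition that the paper leaves implicit.
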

\begin{proof}
As in Lemma \ref{lem:affine-minimizer}, $\mathcal P_d$ is a finite-dimensional subspace of $L^2(B_r(x_0),d\mu)$. Distinct polynomials represent distinct elements: by Lemma \ref{lem:mass} and continuity, $u$ is nonzero on a smaller open ball, and a polynomial whose weighted norm vanishes must vanish on that ball and hence everywhere. The subspace is closed, so the Hilbert-space projection theorem gives existence, uniqueness and the normal equations \eqref{eq:polynomial-normal-equations}.
\end{proof}

Then, we prove the uniform weighted norm equivalence for polynomial functions.

\begin{Lemma}\label{lem:polynomial-moments}
There are $c_d,C_d>0$, depending only on $n,d,\lambda,\Lambda,L,D$,
such that, for $x_0\in B_{3/2}$, $0<r\le1/4$ and $\pi\in\mathcal P_d$,
\begin{equation}\label{eq:polynomial-moments}
c_d\sum_{|\nu|\le d}r^{2|\nu|}|D^\nu\pi(x_0)|^2\le\fint_{B_r(x_0)}|\pi|^2\,d\mu\le C_d\sum_{|\nu|\le d}r^{2|\nu|}|D^\nu\pi(x_0)|^2.
\end{equation}
\end{Lemma}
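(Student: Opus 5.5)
I will follow the proof of Lemma~\ref{lem:affine-moments}, replacing affine functions by polynomials of degree at most $d$. First I rescale. Set $\kappa^2=r^{-n}\int_{B_r(x_0)}u^2$, $u_{x_0,r}(y)=u(x_0+ry)/\kappa$ and $A_{x_0,r}(y)=A(x_0+ry)$. These are defined on $B_2$, and as in \eqref{eq:boundsB1B2} they satisfy
\[
\int_{B_1}u_{x_0,r}^2=1,\qquad \int_{B_2}u_{x_0,r}^2\le D,\qquad [A_{x_0,r}]_{C^{0,1}(B_2)}\le L .
\]
Define the rescaled polynomial $p(y)=\pi(x_0+ry)\in\mathcal P_d$. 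Then $D^\nu p(0)=r^{|\nu|}D^\nu\pi(x_0)$, and the change of variables gives
\[
\fint_{B_r(x_0)}|\pi|^2\,d\mu=\frac{\int_{B_1}u_{x_0,r}^2\,p^2}{\int_{B_1}u_{x_0,r}^2}=\int_{B_1}u_{x_0,r}^2\,p^2 .
\]
So \eqref{eq:polynomial-moments} reduces to the equivalence
\[
c_d|p|^2\le\int_{B_1}u_{x_0,r}^2p^2\le C_d|p|^2,\qquad |p|^2:=\sum_{|\nu|\le d}|D^\nu p(0)|^2 .
\]
Here $|p|$ is a norm on the finite-dimensional space $\mathcal P_d$, and by homogeneity it suffices to prove the equivalence on the unit sphere $\{|p|=1\}$.

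The upper bound is elementary. Writing $p(y)=\sum_{|\nu|\le d}D^\nu p(0)\,y^\nu/\nu!$ and using $|y^\nu|\le1$ on $B_1$, Cauchy--Schwarz gives $\sup_{B_1}|p|^2\le C(n,d)|p|^2$. Combined with $\int_{B_1}u_{x_0,r}^2=1$, this yields $C_d=C(n,d)$.

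For the lower bound I argue by contradiction, exactly as in Lemma~\ref{lem:affine-moments}. Suppose there are coefficient matrices $A_k$, normalized solutions $u_k$ (with $\int_{B_1}u_k^2=1$ and $\int_{B_2}u_k^2\le D$), and polynomials $p_k$ with $|p_k|=1$ such that $\int_{B_1}u_k^2p_k^2\to0$. By \eqref{eq:E1} the $u_k$ are bounded in $C^{1,\eta}(B_{3/2})$, so a subsequence converges uniformly on $\overline{B_1}$ to some $u_\infty$ with $\int_{B_1}u_\infty^2=1$. By compactness of the unit sphere in $\mathcal P_d$, a further subsequence gives $p_k\to p_\infty$ uniformly on $\overline{B_1}$ with $|p_\infty|=1$.

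Passing to the limit, the nonnegative continuous integrand $u_\infty^2p_\infty^2$ has zero integral, so $u_\infty p_\infty\equiv0$ on $B_1$. Since $u_\infty$ is continuous and not identically zero, it is nonzero on some open ball, where $p_\infty$ must vanish. A polynomial vanishing on an open set is identically zero, which contradicts $|p_\infty|=1$.

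The only mildly delicate point is that the constant must be uniform over all admissible $A$, $u$, $x_0$ and $r$. This holds because the contradiction sequence ranges over the whole class, and the class is compact thanks to the uniform interior estimate \eqref{eq:E1}, with the rescaled Lipschitz seminorm bounded by $rL\le L$. No structure of the nodal set is used beyond $u_\infty\not\equiv0$.
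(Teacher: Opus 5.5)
Your proposal is correct and follows essentially the same route as the paper. You rescale to $B_1$ with the normalized denominator $u_{x_0,r}$, get the upper bound from the exact Taylor formula, and prove the lower bound by contradiction using the uniform interior estimate \eqref{eq:E1}, concluding that the limit polynomial vanishes on an open ball. The only cosmetic difference is that you normalize with $D^\nu p(0)$ instead of the Taylor coefficients $q_\nu=r^{|\nu|}D^\nu\pi(x_0)/\nu!$, which changes the constants only by factors depending on $n$ and $d$.
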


\begin{proof}
The exact Taylor formula
\begin{equation}\label{eq:taylor-formula}
\pi(x)=\sum_{|\nu|\le d}\frac{D^\nu\pi(x_0)}{\nu!}(x-x_0)^\nu
\end{equation}
gives the upper bound. For the lower bound, let us rescale as in the proof of
Lemma \ref{lem:affine-moments}; that is,
\begin{equation*}
\kappa^2=\frac{1}{r^n}\int_{B_r(x_0)}u^2,\qquad u_{x_0,r}(y)=\frac{u(x_0+ry)}{\kappa},\qquad A_{x_0,r}(y)=A(x_0+ry).
\end{equation*}
The normalized denominator $u_{x_0,r}$ has $L^2(B_1)$ norm one, $L^2(B_2)$ norm at most $\sqrt D$, see \eqref{eq:boundsB1B2}, and uniform interior bounds from \eqref{eq:E1} (for instance, $u_{x_0,r}$ is uniformly bounded in $C^{1,\eta}(B_{3/2})$ for a chosen $\eta\in(0,1)$).
Set
$$
q_\nu:=\frac{r^{|\nu|}D^\nu\pi(x_0)}{\nu!},\qquad |\nu|\le d.
$$
After the change of variables $x=x_0+ry$, the desired lower bound in \eqref{eq:polynomial-moments} is equivalent, up to changing the constant $c_d$, to
$$
\int_{B_1}u_{x_0,r}^2(y)\left|\sum_{|\nu|\le d}q_\nu y^\nu\right|^2\geq c_d\sum_{|\nu|\le d}|q_\nu|^2.
$$
If $\sum_{|\nu|\le d}|q_\nu|^2=0$, there is nothing to prove. Otherwise, dividing all the coefficients by
$$
\sqrt{\sum_{|\nu|\le d}|q_\nu|^2},
$$
and still using the same notation $q_\nu$ for the new coefficients, it is enough to prove
$$
\int_{B_1}u_{x_0,r}^2(y)\left|\sum_{|\nu|\le d}q_\nu y^\nu\right|^2\geq c_d
$$
for every family of coefficients satisfying
$$
\sum_{|\nu|\le d}|q_\nu|^2=1.
$$
If the lower bound failed, we could find normalized $u_k$ and polynomials
$$
\pi_k(y)=\sum_{|\nu|\le d}q^k_\nu y^\nu\qquad\mathrm{with}\quad \sum_{|\nu|\le d}|q^k_\nu|^2=1,
$$
such that
$$
\int_{B_1}u_k^2\pi_k^2\longrightarrow0.
$$
Take subsequences for which $u_k$ converges uniformly on $B_1$ to
$u_\infty$, with $\int_{B_1}u_\infty^2=1$, and the coefficient vectors of $\pi_k$ converge to those of a polynomial $\pi_\infty$ with the same normalization; that is,
$$
\pi_\infty(y)=\sum_{|\nu|\le d}q^\infty_\nu y^\nu\qquad\mathrm{with}\quad \sum_{|\nu|\le d}|q^\infty_\nu|^2=1.
$$
Then $u_\infty\pi_\infty=0$ on $B_1$. The nontrivial continuous function $u_\infty$ is nonzero on an open ball. Hence $\pi_\infty$ vanishes identically, a contradiction.
\end{proof}

In the next result, we provide quantitative control of the coefficients of the best polynomial approximation, together with control of their variation across scales.

\begin{Lemma}\label{lem:polynomial-comparison}
The following estimates hold:
\begin{itemize}
\item[$i)$] for $x_0\in B_1$ and $0<r\le1/4$,
\begin{equation}\label{eq:polynomial-size}
|\pi_{x_0,r}(x_0)-w(x_0)|+\sum_{1\le|\nu|\le d}r^{|\nu|}|D^\nu\pi_{x_0,r}(x_0)|\le C\mathcal N(u,v)r,
\end{equation}
where $C=C(n,d,\lambda,\Lambda,L,D)>0$;
\item[$ii)$] for $x_0\in B_{3/2}$, $0<r\le1/4$ and $r\le s\le\min\{2r,1/4\}$,
\begin{equation}\label{eq:polynomial-fit-difference}
\sum_{|\nu|\le d}r^{|\nu|}|D^\nu(\pi_{x_0,r}-\pi_{x_0,s})(x_0)|\le C\bigl(\mathcal E_d(x_0,r)+\mathcal E_d(x_0,s)\bigr),
\end{equation}
where $C=C(n,d,\lambda,\Lambda,L,D)>0$;
\item[$iii)$] for every fixed $\rho\in(0,1/4]$, there is $C=C(n,D,\rho)>0$ such that, for $x_0\in B_{3/2}$ and $\rho\le r\le1/4$,
\begin{equation}\label{eq:uniform-bound-polynomial}
\mathcal E_d(x_0,r)\le C\mathcal N(u,v).
\end{equation}
\end{itemize}
\end{Lemma}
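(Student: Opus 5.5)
The plan is to follow the proof of Lemma \ref{lem:affine-comparison} line by line, with Lemma \ref{lem:polynomial-moments} taking the place of Lemma \ref{lem:affine-moments}, and with the Lipschitz bound \eqref{eq:higher-initial-lipschitz} taking the place of the $C^{0,\beta}$ bound.

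For point $i)$, we use that $\pi_{x_0,r}$ is the orthogonal projection of $w$ onto $\mathcal P_d$ in $L^2(B_r(x_0),d\mu)$ (Lemma \ref{lem:polynomial-minimizer}). Constants lie in $\mathcal P_d$ and the projection is linear, so the projection of $w-w(x_0)$ is $\pi_{x_0,r}-w(x_0)$. The contraction property then gives
$$
\fint_{B_r(x_0)}|\pi_{x_0,r}-w(x_0)|^2\,d\mu\le\fint_{B_r(x_0)}|w-w(x_0)|^2\,d\mu\le C\mathcal N(u,v)^2r^2,
$$
where the last inequality uses \eqref{eq:higher-initial-lipschitz} and the inclusion $B_r(x_0)\subset B_{5/4}$, valid since $x_0\in B_1$ and $r\le1/4$. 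Now apply the lower bound in \eqref{eq:polynomial-moments} to the polynomial $\pi_{x_0,r}-w(x_0)$. Its value at $x_0$ is $\pi_{x_0,r}(x_0)-w(x_0)$, and its derivatives of order $|\nu|\ge1$ at $x_0$ coincide with those of $\pi_{x_0,r}$. This gives
$$
c_d\Big(|\pi_{x_0,r}(x_0)-w(x_0)|^2+\sum_{1\le|\nu|\le d}r^{2|\nu|}|D^\nu\pi_{x_0,r}(x_0)|^2\Big)\le C\mathcal N(u,v)^2r^2.
$$
Taking square roots, and bounding the $\ell^1$ sum over the finitely many multiindices by the $\ell^2$ sum up to a dimensional constant, yields \eqref{eq:polynomial-size}.

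For point $ii)$, apply the lower bound in \eqref{eq:polynomial-moments} to $\pi_{x_0,r}-\pi_{x_0,s}$ on $B_r(x_0)$, and again pass from the $\ell^2$ to the $\ell^1$ sum. Then add and subtract $w$ and use Minkowski's inequality to bound $\big(\fint_{B_r(x_0)}|\pi_{x_0,r}-\pi_{x_0,s}|^2\,d\mu\big)^{1/2}$ by
$$
\mathcal E_d(x_0,r)+\Big(\frac{\mu(B_s(x_0))}{\mu(B_r(x_0))}\Big)^{1/2}\mathcal E_d(x_0,s).
$$
Here we use $B_r(x_0)\subset B_s(x_0)$ to enlarge the domain of the second integral. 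Since $s\le 2r$, the doubling inequality \eqref{eq:doubling} bounds the measure ratio by $D$, which gives \eqref{eq:polynomial-fit-difference}.

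For point $iii)$, take the competitor $\pi=0$ in \eqref{eq:polynomial-excess}. This gives $\mathcal E_d(x_0,r)\le\|w\|_{L^2(B_2,d\mu)}\,\mu(B_r(x_0))^{-1/2}$, and Lemma \ref{lem:mass} with $r\ge\rho$ gives $\mu(B_r(x_0))\ge c_\rho\mu(B_1)$.

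None of these steps is a real obstacle. The only point to watch is in $i)$: the Lipschitz estimate gives only the factor $r$, not $r^{d}$, and this weaker rate is exactly what the statement claims.
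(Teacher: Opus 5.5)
Your proposal is correct and follows the paper's proof essentially verbatim: for $i)$ the projection-contraction argument with the Lipschitz bound \eqref{eq:higher-initial-lipschitz} and the lower bound in \eqref{eq:polynomial-moments}; for $ii)$ the moment lower bound, Minkowski's inequality and doubling; for $iii)$ the competitor $\pi=0$ together with Lemma \ref{lem:mass}. Your explicit passage from the $\ell^2$ to the $\ell^1$ sum over multiindices is a harmless detail that the paper leaves implicit.
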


\begin{proof}
Let us prove point $i)$. By Lemma \ref{lem:polynomial-minimizer}, $\pi_{x_0,r}$ is the orthogonal projection of $w$ onto the space of polynomials $\mathcal P_d$ in $L^2(B_r(x_0),d\mu)$. Since constants belong to this space and the orthogonal projection is linear, the projection of $w-w(x_0)$ is $\pi_{x_0,r}-w(x_0)$. Hence, by the contraction property of orthogonal projections, and the Lipschitz bound of $w$ in \eqref{eq:higher-initial-lipschitz} (notice that $B_r(x_0)\subset B_{5/4}$),
$$
\fint_{B_r(x_0)}|\pi_{x_0,r}-w(x_0)|^2\,d\mu\le\fint_{B_r(x_0)}|w-w(x_0)|^2\,d\mu\le C\mathcal N(u,v)^2r^{2}.
$$
Applying the lower bound in \eqref{eq:polynomial-moments} of Lemma \ref{lem:polynomial-moments} to the polynomial $\pi_{x_0,r}-w(x_0)$
gives \eqref{eq:polynomial-size}.

For point $ii)$, we want to estimate the distance between two polynomial fits at different scales
\begin{align*}
 \sqrt{c_d\sum_{|\nu|\le d}r^{2|\nu|}|D^\nu(\pi_{x_0,r}-\pi_{x_0,s})(x_0)|^2}
 &\le\left(\fint_{B_r(x_0)}|\pi_{x_0,r}-\pi_{x_0,s}|^2 \,d\mu\right)^{1/2}\\
 &\le \mathcal E_d(x_0,r)
   +\left(\frac{\mu(B_{s}(x_0))}{\mu(B_r(x_0))}\right)^{1/2}
       \mathcal E_d(x_0,s)\\
 &\le \mathcal E_d(x_0,r)+\sqrt D\,\mathcal E_d(x_0,s).
\end{align*}
In the previous inequalities, we used the lower bound in \eqref{eq:polynomial-moments}, we added and subtracted $w$ in the integral, we used the Minkowski inequality, the assumption $r\leq s$ and the definition of the excess. Then, by the assumption $s\leq 2r$, and the doubling inequality from scale $2r$ to $r$, \eqref{eq:polynomial-fit-difference} follows.

For the last point $iii)$, choosing the competitor $\pi=0$ in the minimization \eqref{eq:polynomial-excess}, we get
\begin{equation*}
\mathcal E_d(x_0,r)\le\frac{\|w\|_{L^2(B_2,d\mu)}}{\mu(B_r(x_0))^{1/2}}.
\end{equation*}
Then, Lemma \ref{lem:mass} and $r\geq\rho$ give 
$$\mu(B_r(x_0))\geq\mu(B_\rho(x_0))\ge c_\rho\mu(B_1).$$
The latter gives \eqref{eq:uniform-bound-polynomial}.
\end{proof}

\subsection{Decay of the higher-order excess}\label{sec:poly-algebra}
The next result shows that the assumptions of Theorem \ref{thm:higher} imply a quantitative decay of the $d$th-order excess.

\begin{Proposition}\label{prop:higher-excess}
Fix $r_0=1/4$. The assumptions of Theorem \ref{thm:higher} (with notation $k=d$) imply that there is $C=C(n,d,\lambda,\Lambda,L,D,\alpha)>0$ such that for any $x_0\in B_1$ and any $0<r\le r_0$
\begin{equation}\label{eq:higher-excess-bound}
 \mathcal E_d(x_0,r)\le C\mathcal N(u,v)r^{d+\alpha}.
\end{equation}
\end{Proposition}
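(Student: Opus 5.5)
We follow the scheme of Propositions \ref{prop:constant-excess} and \ref{prop:affine-excess}. We normalize $\mu(B_1)=\|v\|_{L^2(B_2)}=1$ and argue by contradiction. We define $\Theta(\rho)$ as the supremum of $\mathcal E_d(x_0,r)/r^{d+\alpha}$ over normalized triples, $x_0\in B_1$ and $\rho\le r\le r_0$; it is finite by Lemma \ref{lem:polynomial-comparison} $iii)$. We then select $(A_k,u_k,v_k,x_k,r_k)$ with $M_k=\sigma_k/r_k^{d+\alpha}\to\infty$, $r_k\to0$, and $\mathcal E_d(x_k,s)\le 2M_ks^{d+\alpha}$ for $s\in[r_k,r_0]$. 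The blow-ups are $\tilde u_k=u_k(x_k+r_k\cdot)/\kappa_k$ and $\tilde v_k=(v_k-\pi_{x_k,r_k}u_k)(x_k+r_k\cdot)/(\kappa_k\sigma_k)$. They satisfy $\|\tilde u_k\|_{L^2(B_1)}=\|\tilde v_k\|_{L^2(B_1)}=1$ and $\tilde v_k\perp\tilde u_k\mathcal P_d$ in $L^2(B_1)$, and $Z(\tilde u_k)\subseteq Z(\tilde v_k)$. The growth step is as before. Lemma \ref{lem:polynomial-comparison} $ii)$ along the dyadic scales $s_i=2^ir_k$ bounds the coefficients $s_i^{|\nu|}|D^\nu(\pi_{k,i}-\pi_{k,i+1})(x_k)|$ by $CM_ks_i^{d+\alpha}$. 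Expanding $\pi_{k,i}-\pi_{k,i+1}$ by Taylor at $x_k$ on $B_{Rr_k}(x_k)$ and summing geometric series (using $\alpha<1$ so that every exponent $d+\alpha-|\nu|>0$) gives $\|\tilde v_k\|_{L^2(B_R)}\le CR^{d+\alpha}\|\tilde u_k\|_{L^2(B_R)}$. Doubling gives $\int_{B_R}\tilde u_k^2\le DR^{\log_2D}$.

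The main obstacle is compactness of $\tilde v_k$ together with identification of a polyharmonic limit. Write the rescaled fit as $\tilde\pi_k(x)=\sum_{1\le|\nu|\le d}p^k_\nu x^\nu$ after removing the constant, with $p^k_\nu=r_k^{|\nu|}D^\nu\pi_{x_k,r_k}(x_k)/(\nu!\,\sigma_k)$. Then $v_k^*=\tilde v_k+\tilde\pi_k\tilde u_k$ solves $L_{\tilde A_k}v_k^*=0$. Lemma \ref{lem:polynomial-comparison} $i)$ gives $\sigma_k|p^k_\nu|\le Cr_k$, so $|p^k_\nu|\le Cr_k^{1-d-\alpha}/M_k$. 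This is large, but it is compensated by powers of $r_k$ coming from the coefficient oscillation. Indeed, $\tilde A_k-T_k$ is $O(r_k^{d-1+\alpha})$ in $C^{0,\eta}(B_{2R})$, where $T_k$ is the degree $d-1$ Taylor polynomial of $\tilde A_k$ at $0$, whose nonconstant part is $O(r_k)$. The plan is to build replacements by a finite iteration. First, take the $L_0$-harmonic replacements $h^u_k,h^v_k$ with $L_0=\mathrm{div}(\tilde A_k(0)\nabla\cdot)$. Then add successive corrections $h^{u,j}_k$, $h^{v,j}_k$ solving $L_0h^{\cdot,j}=-\mathrm{div}((T_k-\tilde A_k(0))\nabla h^{\cdot,j-1})$, each a polynomial-coefficient source gaining a factor $r_k$. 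We stop after about $d$ steps, when the leftover error, estimated by \eqref{eq:E2}, is $o(1)$ once multiplied by $\max|p^k_\nu|$; here the term $r_k^{d-1+\alpha}$ against $r_k^{1-d-\alpha}/M_k$ gives $1/M_k$ up to one extra $r_k$ factor. This is the delicate bookkeeping, and I would organize it by tracking the order in $r_k$ of each term.

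The replacement for $\tilde v_k$ is then $G_k=h^v_k-\tilde\pi_k h^u_k$, with the corrections included. Applying $L_0$ repeatedly, each application lowers the polynomial degree of $\tilde\pi_k$ or of the coefficient perturbation, so $L_0^{m}G_k$ vanishes up to controlled small terms for some $m=m(d)$; I would try to arrange this to be exact by choosing the corrections so that each $h^{\cdot,j}$ is $L_0^{j+1}$-harmonic. Since $G_k$ is then polyharmonic and $L^2$-bounded on $B_{2R}$, Remark \ref{rem:Hj} and Sobolev embedding give local uniform compactness. In the limit we obtain $L_{A_0}U=0$ and $L_{A_0}^{m}V=0$ on $\R^n$, the limiting normalizations, orthogonality $V\perp U\mathcal P_d$, and the growth bounds.

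The conclusion is then verbatim. Step 5 transfers $Z(U)\subseteq Z(V)$ using sign changes of $U$. After the change of variables $x=A_0^{1/2}y$, Theorem \ref{thm:app-liouville} makes $\tilde U=Q$ a harmonic polynomial and $\tilde V=P$ a polyharmonic polynomial with $\deg P\le\lfloor d+\alpha+\deg Q\rfloor=\deg Q+d$. Lemma \ref{lem:murdoch} gives $P=QR$ with $\deg R\le d$, so $V=U\tilde R$ with $\tilde R\in\mathcal P_d$. Orthogonality then yields $1=\int_{B_1}V^2=\int_{B_1}VU\tilde R=0$, a contradiction.
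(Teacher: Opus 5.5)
Your proposal follows the paper's proof essentially step by step. It uses the same Taylor splitting of $\tilde A_k$, and your additive corrections, taken with zero boundary data, are exactly the increments $h_{k,i}[f]-h_{k,i-1}[f]$ of the paper's iterates. The error balance is the same, $r_k^{d-1+\alpha}(1+B_k)=r_k^{d-1+\alpha}+1/M_k\to0$ with $B_k=r_k/\sigma_k$; in particular $d-1$ corrections suffice, since $r_k^{d}B_k=r_k^{1-\alpha}/M_k$. The Liouville--Murdoch conclusion is also the same.

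The one thing you have wrong is the polyharmonic order, and with it the hedge about ``small terms''. Because the coefficient perturbation has degree up to $d-1$, the product rule \eqref{eq:poly-product} gives $L_0^{p}h^{j-1}=0\Rightarrow L_0^{p+d}h^{j}=0$. So the $j$-th correction is exactly $L_0^{1+jd}$-harmonic, as in \eqref{eq:poly-hi}, and $G_k$ is exactly $L_0^{d^2+1}$-harmonic, with no small remainder.

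The order $j+1$ you propose to arrange is false in general, and no choice of boundary data can achieve it, since the source alone fixes $L_0h^j$. For example, take $L_0=\Delta$ in $\R^2$, perturbation $x_1\,\mathrm{diag}(0,1)$ and $h^0=x_1^3-3x_1x_2^2$. Then $\Delta h^1=6x_1^2$, hence $\Delta^2h^1=12\neq0$. The lower order is also unnecessary: any finite order suffices for Remark~\ref{rem:Hj} and Theorem~\ref{thm:app-liouville}, so the argument closes as in the paper.
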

\begin{proof}
Without loss of generality $v$ is nontrivial in $B_2$. Replace $u$ by $u/\|u\|_{L^2(B_1)}$ and $v$ by $v/\|v\|_{L^2(B_2)}$. The new quotient is $w/\mathcal N(u,v)$ and the new excess is $\mathcal E_d/\mathcal N(u,v)$. The equations, zeros, and the doubling constant $D$ are unchanged. We prove the estimate for these normalized pairs, for which $\mu(B_1)=\|w\|_{L^2(B_2,d\mu)}^2=\|v\|_{L^2(B_2)}^2=1$, using the fixed radius $r_0=1/4$ throughout this proof. Remark \ref{rem:lip-scaled} gives \eqref{eq:higher-initial-lipschitz} in $B_{5/4}$. Every ball with center in $B_1$ and radius at most $r_0=1/4$ lies in $B_{5/4}$.\\

{\bf Step 1: the contradiction argument and the choice of the scale.}
Suppose that \eqref{eq:higher-excess-bound} fails for normalized triples. Set
$$
 \Theta(\rho)=\sup_{A,u,v}\sup_{x_0\in B_1}\sup_{\rho\le r\le r_0}\frac{\mathcal E_d(x_0,r)}{r^{d+\alpha}},\qquad 0<\rho\le r_0,
$$
where the first supremum is over all normalized triples satisfying the fixed bounds $n,\lambda,\Lambda,L,D,d,\alpha$. By Lemma \ref{lem:polynomial-comparison} point $iii)$, $\Theta(\rho)<\infty$ for every $\rho>0$, whereas failure of the estimate means $\Theta(\rho)\to\infty$ as $\rho\to0$. Choose triples $(A_k,u_k,v_k)$, $\rho_k\to0$,
$x_k\in B_1$ and $r_k\in[\rho_k,r_0]$ such that
$$
\frac{\mathcal E_d(x_k,r_k)}{r_k^{d+\alpha}}\ge\frac{\Theta(\rho_k)}2.
$$
Set $\sigma_k=\mathcal E_d(x_k,r_k)$ and $M_k=\sigma_k/r_k^{d+\alpha}$. Then $M_k\to\infty$ and for any $s\in[r_k,r_0]$
\begin{equation}\label{eq:higher-maximal}
\mathcal E_d(x_k,s)\le2M_ks^{d+\alpha}.
\end{equation}
As in the previous proofs, finiteness of $\Theta(\rho)$ for positive $\rho$ implies $r_k\to0$.

{\bf Step 2: blow-up sequences and normalization.}
Let $\pi_k=\pi_{x_k,r_k}$ be the selected best polynomial. Define
\begin{align*}
\kappa_k^2&=\int_{B_1}u_k^2(x_k+r_kx),\qquad \tilde A_k(x)=A_k(x_k+r_kx),\\
\tilde u_k(x)&=\frac{u_k(x_k+r_kx)}{\kappa_k},\qquad \tilde v_k(x)=\frac{(v_k-\pi_ku_k)(x_k+r_kx)}{\kappa_k\sigma_k}.
\end{align*}
These functions are defined on $\Omega_k=(B_2-x_k)/r_k$, which
contains $B_{1/r_k}$. We have $L_{\tilde A_k}\tilde u_k=0$ and
$Z(\tilde u_k)\subseteq Z(\tilde v_k)$.
The excess and the normal equations \eqref{eq:polynomial-normal-equations} give
\begin{equation}\label{eq:higher-normalization}
\int_{B_1}\tilde u_k^2=\int_{B_1}\tilde v_k^2=1,\qquad\int_{B_1}\tilde v_k\tilde u_k\pi=0\quad\forall\,\pi\in\mathcal P_d.
\end{equation}

{\bf Step 3: integral growth of the blow-up sequences.}
For $R=1$, the bounds below follow from \eqref{eq:higher-normalization}. Fix $R>1$. Since $r_k\to0$, one has $Rr_k\leq r_0$ for every $k\geq \overline k(R)$. Then, let's consider $J\in\mathbb N\setminus\{0\}$ such that $2^{J-1}<R\leq 2^J$, and define $s_i=2^ir_k$ for any $i=0,\dots,J-1$ and $s_J=Rr_k$.

Let $\pi_{k,i}=\pi_{x_k,s_i}$ be the best polynomial fit on $B_{s_i}(x_k)$. Since $s_i\le s_{i+1}\le \min\{2s_i,1/4\}$, by Lemma \ref{lem:polynomial-comparison} point $ii)$ (see \eqref{eq:polynomial-fit-difference}) and \eqref{eq:higher-maximal}, for any $|\nu|\le d$ and any $i=0,\dots,J-1$
$$
|D^\nu(\pi_{k,i}-\pi_{k,i+1})(x_k)|\le CM_ks_i^{d+\alpha-|\nu|}.
$$
For $y\in B_{Rr_k}(x_k)$ and $i=0,\dots,J-1$, using the Taylor formula \eqref{eq:taylor-formula} we have
$$
|\pi_{k,i}(y)-\pi_{k,i+1}(y)|\leq\sum_{|\nu|\le d}\frac{|D^\nu(\pi_{k,i}-\pi_{k,i+1})(x_k)|}{\nu!}|y-x_k|^{|\nu|}\leq CM_k\sum_{|\nu|\le d}(Rr_k)^{|\nu|}s_i^{d+\alpha-|\nu|}.
$$
Hence, recalling that $2^{J-1}<R$, summation as in \eqref{eq:geometricsum} gives
\begin{equation*}
 |\pi_{k,0}(y)-\pi_{k,J}(y)|\le C M_k\sum_{|\nu|\le d}(Rr_k)^{|\nu|}\sum_{i=0}^{J-1}s_i^{d+\alpha-|\nu|}\le C M_k(Rr_k)^{d+\alpha}.
\end{equation*}

Then, in $B_{Rr_k}(x_k)$, adding and subtracting $\pi_{k,J}u_k$, and by the definition of the excess at scale $Rr_k$, we get
\begin{eqnarray*}
\|v_k-\pi_{k,0}u_k\|_{L^2(B_{Rr_k}(x_k))}&\leq& \big(\mathcal E_d(x_k,Rr_k)+\sup_{B_{Rr_k}(x_k)}|\pi_{k,0}(y)-\pi_{k,J}(y)|\big)\|u_k\|_{L^2(B_{Rr_k}(x_k))}\\
&\leq& C M_k(Rr_k)^{d+\alpha}\|u_k\|_{L^2(B_{Rr_k}(x_k))}.
\end{eqnarray*}
After changing variables and dividing by $\kappa_k\sigma_k=\kappa_kM_kr_k^{d+\alpha}$, we have
\begin{equation}\label{eq:higher-growth}
\|\tilde v_k\|_{L^2(B_R)}\le C R^{d+\alpha}\|\tilde u_k\|_{L^2(B_R)}.
\end{equation}
The constant depends only on the fixed data and is independent of $R,k$.

As in the proof of Proposition \ref{prop:constant-excess}, iterating the doubling inequality $N_0=\lceil\log_2R\rceil$ times gives
\begin{equation}\label{eq:higher-denominator-growth}
 \int_{B_R}\tilde u_k^2\le D^{\lceil\log_2R\rceil}\le D R^{\log_2D}.
\end{equation}
Consequently $\tilde u_k,\tilde v_k$ are bounded in $L^2$ on every fixed ball.\\

{\bf Step 4: compactness and the limiting equations.}
Let $L_0\cdot=\mathrm{div}(\tilde A_k(0)\nabla\cdot)$. The aim of this step is to approximate $\tilde v_k$, on each fixed ball, by $L_0$-polyharmonic functions, and to deduce compactness of the sequence by this approximation procedure. We also gain the fact that the limit in the blow-up procedure is polyharmonic.

Compared with {\bf Step 4} in the proof of Proposition \ref{prop:affine-excess}, the situation is more involved, since we need to introduce several correction terms in order to exploit the Taylor expansion of the variable coefficients. It is convenient to further split this step into substeps.\\

\emph{Step 4.1: estimates for $\tilde u_k,\tilde v_k$ and for the polynomial fits.}
Fix $R\ge1$ and an exponent $\eta\in(0,1)$, for instance $\eta=1/2$. After taking a subsequence, assume that $\tilde A_k(0)\to A_0$. Throughout the construction, $k$ is large enough that $B_{4R}\subset\Omega_k$ and $r_k\le1$. All constants $C_R$ below are independent of $k$.

Let us consider the normalized polynomial fit minus its zero order term; that is,
$$
\tilde\pi_k(x)=\frac{\pi_k(x_k+r_kx)-\pi_k(x_k)}{\sigma_k},
$$
and set
$$
v_k^*=\tilde v_k+\tilde\pi_k\tilde u_k=\frac{(v_k-\pi_k(x_k)u_k)(x_k+r_kx)}{\kappa_k\sigma_k}.
$$
Both $\tilde u_k$ and $v_k^*$ are $L_{\tilde A_k}$-harmonic. By \eqref{eq:polynomial-size}, for any $1\le|\nu|\le d$
$$
r_k^{|\nu|}|D^\nu\pi_k(x_k)|\le Cr_k.
$$
Using also the Taylor formula \eqref{eq:taylor-formula} for $\pi_k(x_k+r_kx)$, we have
\begin{equation*}
\tilde\pi_k(x)=\sum_{1\leq|\nu|\le d}\frac{r_k^{|\nu|}D^\nu\pi_k(x_k)}{\sigma_k\nu!}x^\nu,
\end{equation*}
so that
\begin{equation}\label{eq:higher-polynomial-size}
\|\tilde\pi_k\|_{C^{1,\eta}(B_{4R})}\le C_R B_k,\qquad B_k=\frac{r_k}{\sigma_k}.
\end{equation}
The latter inequality easily follows by
\begin{equation*}
\|\tilde\pi_k\|_{L^\infty(B_{4R})}+\|\nabla\tilde\pi_k\|_{L^\infty(B_{4R})}+\|D^2\tilde\pi_k\|_{L^\infty(B_{4R})}\le C_R B_k,
\end{equation*}
and
\begin{equation*}
\frac{|\nabla\tilde\pi_k(x)-\nabla\tilde\pi_k(y)|}{|x-y|^\eta}\le \|D^2\tilde\pi_k\|_{L^\infty(B_{4R})}|x-y|^{1-\eta}\leq (8R)^{1-\eta}\|D^2\tilde\pi_k\|_{L^\infty(B_{4R})}\qquad \forall x,y\in B_{4R}\quad\mathrm{with} \ x\neq y.
\end{equation*}

The growth estimates in {\bf Step 3} and the identity for $v_k^*$ give
$$
 \|\tilde u_k\|_{L^2(B_{4R})}\le C_R,\qquad \|v_k^*\|_{L^2(B_{4R})}\le C_R(1+B_k).
$$
The interior estimate \eqref{eq:E1} therefore yields
\begin{equation}\label{eq:higher-exact-Schauder}
\|\tilde u_k\|_{C^{1,\eta}(B_{2R})}\le C_R,\qquad \|v_k^*\|_{C^{1,\eta}(B_{2R})}\le C_R(1+B_k).
\end{equation}
Notice that this is not enough by itself to obtain compactness, since $B_k$ may diverge. We shall instead construct suitable $L_0$-polyharmonic approximations of $v_k^*$ and $\tilde u_k$. These will yield an $L_0$-polyharmonic approximation of $\tilde v_k$ with error bounded by $C_R r_k^{d-1+\alpha}(1+B_k)$. This is sufficient, since, recalling that $\sigma_k=M_kr_k^{d+\alpha}$ and $B_k=r_k/\sigma_k$,
\begin{equation}\label{eq:higher-needed-accuracy}
r_k^{d-1+\alpha}(1+B_k)=r_k^{d-1+\alpha}+\frac1{M_k}\longrightarrow0.
\end{equation}

\emph{Step 4.2: estimates of the Taylor polynomials of the rescaled coefficients and of the remainders.}
In order to obtain the power $d-1+\alpha$ in the error term, we exploit the $C^{d-1,\alpha}$ regularity of the variable coefficients by separating their Taylor polynomial of order $d-1$ from the corresponding remainder. Consider
\begin{align}
\tilde A_k&=\tilde A_k(0)+T_k+E_k,\label{eq:TaylorA}\\
T_k(x)&=\sum_{1\le|\nu|\le d-1}\frac{r_k^{|\nu|}}{\nu!}D^\nu A_k(x_k)x^\nu.\notag
\end{align}

We want to prove that
\begin{equation}\label{eq:Taylor-norms}
\|T_k\|_{C^{0,\eta}(B_{2R})}\le C_R r_k,\qquad \|E_k\|_{C^{0,\eta}(B_{2R})}\le C_R r_k^{d-1+\alpha}.
\end{equation}
The latter inequalities easily follow by Taylor's theorem for $A_k$ and for its first derivatives, which gives
\begin{equation*}
\|T_k\|_{L^\infty(B_{2R})}+\|\nabla T_k\|_{L^\infty(B_{2R})}\le C_R r_k,\qquad \|E_k\|_{L^\infty(B_{2R})}+\|\nabla E_k\|_{L^\infty(B_{2R})}\le C_R r_k^{d-1+\alpha}.
\end{equation*}
Notice that, in the estimates for the gradients, differentiation with respect to the rescaled variable produces an additional factor $r_k$. Moreover, for every $x,y\in B_{2R}$ with $x\neq y$,
\begin{equation*}
\frac{|T_k(x)-T_k(y)|}{|x-y|^\eta}\le \|\nabla T_k\|_{L^\infty(B_{2R})}|x-y|^{1-\eta}\leq (4R)^{1-\eta} \|\nabla T_k\|_{L^\infty(B_{2R})},
\end{equation*}
and
\begin{equation*}
\frac{|E_k(x)-E_k(y)|}{|x-y|^\eta}\le \|\nabla E_k\|_{L^\infty(B_{2R})}|x-y|^{1-\eta}\leq (4R)^{1-\eta} \|\nabla E_k\|_{L^\infty(B_{2R})}.
\end{equation*}
This proves \eqref{eq:Taylor-norms}.\\

\emph{Step 4.3: the iterative correction scheme and gain of decay.}
At this point, we would like to apply a construction similar to the one in {\bf Step 4} of Proposition \ref{prop:affine-excess}; namely, to consider the $L_0$-harmonic replacements $h_k^u,h_k^v$ of $\tilde u_k$ and $v_k^*$ in $B_{2R}$, and to use the corresponding errors $e_k^u,e_k^v$ to construct an $L_0$-polyharmonic approximation $G_{k,R}$ of $\tilde v_k$.

However, as observed above, the possibly diverging quantity $B_k=r_k/\sigma_k$ has to be compensated by an error of order $r_k^{d-1+\alpha}$. A single $L_0$-harmonic replacement would only produce an error of order $r_k$, which is not sufficient to close the argument. The idea is therefore to iterate the correction procedure a finite number of times, gaining one additional power of $r_k$ at each step, until the
error reaches the order $r_k^{d-1+\alpha}$.

We give the construction for a generic weak solution $f\in H^1(B_{4R})$ of $L_{\tilde A_k}f=0$. We will then apply it to $f=\tilde u_k$ and $f=v_k^*$. Start with the frozen $L_0$-harmonic replacement:
$$
\begin{cases}
-L_0h_{k,0}[f]=0&\text{in }B_{2R},\\
h_{k,0}[f]=f&\text{on }\partial B_{2R}.
\end{cases}
$$
This is the initial approximation. For $i=1,\ldots,d-1$, define the $i$th corrected approximation recursively by
\begin{equation}\label{eq:higher-corrections}
\begin{cases}
-L_0h_{k,i}[f]=\div\bigl(T_k\nabla h_{k,i-1}[f]\bigr)&\text{in }B_{2R},\\
h_{k,i}[f]=f&\text{on }\partial B_{2R}.
\end{cases}
\end{equation}

Thus every corrected approximation has the same boundary values as $f$. All these problems have unique weak solutions by Lax-Milgram.

Let $e_{k,i}[f]=f-h_{k,i}[f]$ be the error terms. These errors have zero boundary values; that is, they belong to $H^1_0(B_{2R})$. Since, by \eqref{eq:TaylorA}, we have
$$
 -L_0f=\div\bigl(T_k\nabla f+E_k\nabla f\bigr),
$$
subtracting the equations gives
\begin{equation*}
\begin{cases}
 -L_0e_{k,0}[f]=\div\bigl((T_k+E_k)\nabla f\bigr),\\
 -L_0e_{k,i}[f]=\div\bigl(T_k\nabla e_{k,i-1}[f]+E_k\nabla f\bigr), & 1\le i\le d-1.
\end{cases}
\end{equation*}
The boundary estimate \eqref{eq:E2} and \eqref{eq:Taylor-norms} give for any $1\le i\le d-1$
\begin{align*}
 \|e_{k,0}[f]\|_{C^{1,\eta}(B_{2R})}&\le C_Rr_k\|f\|_{C^{1,\eta}(B_{2R})},\\
 \|e_{k,i}[f]\|_{C^{1,\eta}(B_{2R})}&\le C_Rr_k\|e_{k,i-1}[f]\|_{C^{1,\eta}(B_{2R})}+C_Rr_k^{d-1+\alpha}\|f\|_{C^{1,\eta}(B_{2R})}.
\end{align*}
These estimates imply successively that
$e_{k,i}[f],h_{k,i}[f]\in C^{1,\eta}(B_{2R})$, provided $f\in C^{1,\eta}(B_{2R})$. This condition is satisfied for
$f=\tilde u_k$ and $f=v_k^*$ by \eqref{eq:higher-exact-Schauder}. 

Notice that each iteration gains one factor $r_k$. More precisely, induction gives
$$
\|e_{k,i}[f]\|_{C^{1,\eta}(B_{2R})}\le C_R\bigl(r_k^{i+1}+r_k^{d-1+\alpha}\bigr)\|f\|_{C^{1,\eta}(B_{2R})},\qquad 0\le i\le d-1.
$$
After $d-1$ iterations, the first term is $r_k^d$, which is bounded by $r_k^{d-1+\alpha}$ because $r_k\le1$ and $\alpha<1$. Hence
\begin{equation}\label{eq:poly-approx-error}
\|e_{k,d-1}[f]\|_{C^{1,\eta}(B_{2R})}=\|f-h_{k,d-1}[f]\|_{C^{1,\eta}(B_{2R})} \le C_Rr_k^{d-1+\alpha}\|f\|_{C^{1,\eta}(B_{2R})}.
\end{equation}

\emph{Step 4.4: the polyharmonic equations for the corrections.}
Now we want to introduce the correct approximation sequence for $\tilde v_k$. Their difference from $\tilde v_k$ must tend to zero, and they must solve a polyharmonic equation with constant coefficients. This will ensure compactness for $\tilde v_k$, and a limit polyharmonic equation for its blow-up limit.

As a general fact, notice that derivatives commute with constant coefficients polyharmonic operators of any order, and multiplication by a polynomial still gives a solution of a polyharmonic operator with a possibly larger order. In other words, for any two integers $p\geq1$ and $a\geq0$
\begin{equation}\label{eq:poly-product}
L_0^p h=0,\quad\pi\in\mathcal P_a\quad\Longrightarrow\quad L_0^{p+a}(\pi h)=0.
\end{equation}

Indeed, the product rule gives
$$
L_0(\pi h)=\pi L_0h+2\tilde A_k(0)\nabla\pi\cdot\nabla h+(L_0\pi)h.
$$
At each application of $L_0$, either another factor $L_0$ acts on $h$, or at least one derivative acts on $\pi$. A term with more than $a$ of the latter choices vanishes. After $p+a$ applications, every remaining term therefore contains a derivative of $L_0^p h$ and also vanishes. The above computations are valid in the sense of distributions, so no additional regularity of the corrections is assumed.

Then, for any $0\le i\le d-1$
\begin{equation}\label{eq:poly-hi}
 L_0^{1+id}h_{k,i}[f]=0\qquad\mathrm{in \ }B_{2R}.
\end{equation}
The latter follows by a recursion. The case $i=0$ holds true since $h_{k,0}[f]$ is the $L_0$-harmonic replacement of $f$ in $B_{2R}$. Suppose the result true for some $0\le i\le d-2$ and prove it for $i+1$. Let's consider the equation \eqref{eq:higher-corrections} for $h_{k,i+1}[f]$; that is,
$$
-L_0h_{k,i+1}[f]=\div\bigl(T_k\nabla h_{k,i}[f]\bigr)\qquad\text{in }B_{2R}.
$$
The right-hand side is
$$
\sum_{a,b=1}^n\left((T_k)_{ab}\partial^2_{ab}h_{k,i}[f]+\partial_a(T_k)_{ab}\,\partial_bh_{k,i}[f]\right).
$$
Since $L_0$ has constant coefficients, derivatives commute with $L_0$. Hence, by the inductive hypothesis, every derivative of $h_{k,i}[f]$ appearing above is annihilated by $L_0^p$, where $p=1+id$. Moreover, the polynomial coefficients $(T_k)_{ab}$ and $\partial_a(T_k)_{ab}$ have degree at most $d-1$. Therefore, by \eqref{eq:poly-product},
$$
L_0^{p+d-1}\div\bigl(T_k\nabla h_{k,i}[f]\bigr)=0.
$$
Applying $L_0^{p+d-1}$ to the equation for $h_{k,i+1}[f]$ gives $L_0^{p+d}h_{k,i+1}[f]=0$. Since $p+d=1+(i+1)d$, this proves \eqref{eq:poly-hi} for $i+1$ and completes the induction.\\

\emph{Step 4.5: compactness of $\tilde v_k$.}
Let us apply the construction of the corrections to the sequences $\tilde u_k$ and $v_k^*$, and define
$$
G_{k,R}=h_{k,d-1}[v_k^*]-\tilde\pi_kh_{k,d-1}[\tilde u_k].
$$
As we proved in \eqref{eq:poly-hi}, the polyharmonic order of each $h_{k,d-1}$ is at most $1+d(d-1)$. By \eqref{eq:poly-product}, multiplication by $\tilde\pi_k$, whose degree is at most $d$, increases this bound by at most $d$. Therefore
\begin{equation}\label{eq:polyharmonic-dquadro}
L_{0}^{d^2+1}G_{k,R}=0\qquad\text{in }B_{2R}.
\end{equation}
Here $d^2+1$ is an upper bound for the power in order to get $L_0$-polyharmonicity, not necessarily the minimal power. On the other hand,
$$
\tilde v_k-G_{k,R}=e_{k,d-1}[v_k^*]-\tilde\pi_ke_{k,d-1}[\tilde u_k].
$$
Combining \eqref{eq:poly-approx-error}, \eqref{eq:higher-exact-Schauder}, \eqref{eq:higher-polynomial-size}, and \eqref{eq:higher-needed-accuracy}, we obtain
\begin{equation}\label{eq:poly-compact-error}
\|\tilde v_k-G_{k,R}\|_{L^\infty(B_{2R})}\le C_Rr_k^{d-1+\alpha}(1+B_k)\longrightarrow0.
\end{equation}

Notice that in {\bf Step 3} we obtained the local uniform boundedness in $L^2$ for the sequence $\tilde v_k$; that is,
$$
\|\tilde v_k\|_{L^2(B_{2R})}\leq C_R.
$$
The latter together with \eqref{eq:poly-compact-error} imply that $G_{k,R}$ is uniformly
bounded in $L^2(B_{2R})$. Remark \ref{rem:Hj}, with $m=d^2$ and matrix $\tilde A_k(0)$, applied on $B_R\subset B_{2R}$, gives, for every integer $j\ge1$,
\begin{equation*}
\|G_{k,R}\|_{H^j(B_R)}\le C_{R,j}\|G_{k,R}\|_{L^2(B_{2R})}\le C_{R,j}.
\end{equation*}
Choosing $j>n/2$, the compact Sobolev embedding and \eqref{eq:poly-compact-error} give compactness of $\tilde v_k$ in $C^0(B_R)$. For $\tilde u_k$, compactness follows directly from \eqref{eq:E1}.

Taking a diagonal subsequence over $R=1,2,\ldots$, we obtain
$$
 \tilde u_k\to U,\qquad \tilde v_k\to V \quad\text{locally uniformly and strongly in }L^2_\loc(\R^n).
$$
Notice that \eqref{eq:poly-compact-error} implies also that $G_{k,R}\to V$ uniformly in $B_R$. The equation for $U$ follows by passing to the limit in the weak equation for $\tilde u_k$. Let us prove that $V$ is polyharmonic. For $\varphi\in C_c^\infty(B_R)$, \eqref{eq:polyharmonic-dquadro} gives
$$
 0=\int_{B_R}G_{k,R}L_{\tilde A_k(0)}^{d^2+1}\varphi
   \longrightarrow\int_{B_R}VL_{A_0}^{d^2+1}\varphi.
$$
Since $R$ is arbitrary, the limiting equations are
\begin{equation}\label{eq:higher-limit-equations}
 L_{A_0}U=0,\qquad L_{A_0}^{d^2+1}V=0 \qquad\text{in }\R^n.
\end{equation}
The normalizations and the growth bounds also pass to the limit:
\begin{gather}
\int_{B_1}U^2=\int_{B_1}V^2=1,\qquad\int_{B_1}VU\pi=0\quad\forall\,\pi\in\mathcal P_d,
 \label{eq:higher-limit-normalization}\\
\int_{B_R}U^2\le D R^{\log_2D},\qquad\|V\|_{L^2(B_R)}\le C R^{d+\alpha}\|U\|_{L^2(B_R)}\quad \forall R\ge1.\notag
\end{gather}

{\bf Step 5: transfer of the nodal inclusion.}
This step is identical to {\bf Step 5} of the proof of Proposition \ref{prop:constant-excess}. We conclude with the same argument that
$$
 Z(U)\subseteq Z(V).
$$

{\bf Step 6: contradiction by polynomial Liouville theorems and Murdoch's division lemma.}
Apply the change $x=Sy$, where $S=A_0^{1/2}$. Then 
$\tilde U(y)=U(Sy)$ is harmonic and $\tilde V(y)=V(Sy)$ is polyharmonic (of order $d^2+1$) in $\R^n$. Moreover, they retain similar $L^2$-integral growth conditions, with the same exponents. In particular
\begin{equation}\label{eq:polynomialbound-tildeU3}
\|\tilde U\|_{L^2(B_R)}\leq CR^{\frac12\log_2D}.
\end{equation}
Theorem \ref{thm:app-liouville}, with $m=0$, therefore implies that $\tilde U$ is a harmonic polynomial $Q$. Set $N=\deg Q$. Here the growth exponent in Theorem \ref{thm:app-liouville} is $\gamma=(\log_2D-n)/2\ge0$ (recall that $D\geq 2^n$). In particular, $N\le\lfloor\gamma\rfloor$.

Set $c_+=\max\{1,\sqrt\Lambda\}$, $c_-=\min\{1,\sqrt\lambda\}$ and $\tilde c=c_+/c_-$.
The inclusions $SB_R\subset B_{c_+R}$ and $S^{-1}B_{c_+R}\subset B_{\tilde cR}$, together with change of variables, give for any $R\geq1$
\begin{equation}\label{eq:changed-relative-growth3}
 \|\tilde V\|_{L^2(B_R)}\le C R^{d+\alpha}\|\tilde U\|_{L^2(B_{\tilde cR})}.
\end{equation}
Here $C$ depends only on the original growth constant, $d,\alpha,\lambda,\Lambda$.
Since $\tilde U=Q$ is a polynomial of degree $N$, its last norm is bounded by $CR^{N+n/2}$, with $C$ independent of $R$. Then,
\begin{equation*}
\|\tilde V\|_{L^2(B_{R})}\leq CR^{d+\alpha+N+\frac{n}{2}}.
\end{equation*}
Theorem \ref{thm:app-liouville} with $m=d^2$ implies that $\tilde V$ is a polyharmonic (of order $d^2+1$) polynomial $P$ of degree $\deg P\leq\lfloor d+\alpha+N\rfloor$. Since $\alpha\in(0,1)$, then $\lfloor d+\alpha+N\rfloor=d+N$. Hence $\deg P\leq\deg Q +d$.

The applied linear change of coordinates does preserve the zero set inclusion; that is, $Z(Q)\subseteq Z(P)$. Since $Q$ is a nontrivial harmonic polynomial, Murdoch's division Lemma \ref{lem:murdoch} yields $P=QR$ for some polynomial $R$. The degree inequality $\deg P\leq\deg Q+d$ implies that $R$ has degree at most $d$; that is, $R\in\mathcal P_d$.

Returning to the original coordinates yields $V=U\tilde R$ in $\R^n$, for some $\tilde R\in\mathcal P_d$. Indeed, $\tilde R=R\circ S^{-1}$. Notice that the composition with an invertible linear map preserves both polynomiality and polynomial degree. The orthogonality and the normalizations in \eqref{eq:higher-limit-normalization} therefore give a contradiction
$$
 0=\int_{B_1}VU\tilde R=\int_{B_1}V^2=1.
$$
The contradiction bounds $\Theta(\rho)$ uniformly as $\rho\to0$. Restoring the independent normalizations of $u$ and $v$ proves \eqref{eq:higher-excess-bound}.
\end{proof}

\subsection{Proof of Theorem \ref{thm:higher}}\label{sec:higher-proof}
We finally complete the proof of the $C^{k,\alpha}$ estimate.

\begin{proof}[Proof of Theorem~\ref{thm:higher}]
Set $d=k$, where $k$ is the regularity index in the statement of the
theorem. Set $r_0=1/4$ and $K=C\mathcal N(u,v)$, so that Proposition \ref{prop:higher-excess} gives
$$
\mathcal E_d(x_0,r)\leq Kr^{d+\alpha}\qquad\forall\,x_0\in B_1,\quad 0<r\leq r_0.
$$
Let
$$
r_j=2^{-j}r_0,\qquad j\in\mathbb N.
$$
By Lemma \ref{lem:polynomial-comparison} point $ii)$, for any $|\nu|\leq d$
\begin{equation}\label{eq:higher-coefficient-increment}
\left|D^\nu\pi_{x_0,r_{j+1}}(x_0)-D^\nu\pi_{x_0,r_j}(x_0)\right|\le CKr_j^{d+\alpha-|\nu|}.
\end{equation}
Since $d+\alpha-|\nu|>0$, $\{D^\nu\pi_{x_0,r_j}(x_0)\}_{j\in\mathbb N}$ are Cauchy sequences and we may define
$$
a_\nu(x_0)=\lim_{j\to\infty}D^\nu\pi_{x_0,r_j}(x_0).
$$
Moreover, summing \eqref{eq:higher-coefficient-increment}, we have
\begin{align}\label{eq:general-jet-error}
\left|a_\nu(x_0)-D^\nu\pi_{x_0,r_j}(x_0)\right|&\le CK\sum_{l=j}^{\infty}r_l^{d+\alpha-|\nu|}\\
&=CKr_j^{d+\alpha-|\nu|}\sum_{m=0}^{\infty}(2^{-(d+\alpha-|\nu|)})^m\notag\\
&\leq CKr_j^{d+\alpha-|\nu|}.\notag
\end{align}
Notice that $a_0(x_0)=w(x_0)$: indeed, by \eqref{eq:polynomial-size} we know that
$$
 |\pi_{x_0,r_j}(x_0)-w(x_0)|\le CKr_j.
$$
We next prove a uniform bound for the limiting coefficients. By Lemma \ref{lem:polynomial-comparison} point $i)$, applied at the fixed scale $r_0$,
$$
|\pi_{x_0,r_0}(x_0)-w(x_0)|+\sum_{1\le|\nu|\le d}r_0^{|\nu|}|D^\nu\pi_{x_0,r_0}(x_0)|\le CKr_0.
$$
Since $r_0$ is fixed and $\|w\|_{L^\infty(B_1)}\le CK$, it follows that $|D^\nu\pi_{x_0,r_0}(x_0)|\le CK$ for any $|\nu|\leq d$. Combining this with \eqref{eq:general-jet-error}, with $j=0$, gives
\begin{equation}\label{eq:general-jet-linfty}
\sup_{x_0\in B_1}|a_\nu(x_0)|\le CK,\qquad |\nu|\le d.
\end{equation}

Let $x_0,y_0\in B_1$ and set $h=|x_0-y_0|$. Assume first that $0<h\leq r_0/2$, and choose $j\geq1$ such that $r_{j+1}<h\leq r_j$. We compare the polynomials at these two centres by setting
$$
\widehat\pi=\pi_{y_0,r_j}-\pi_{x_0,r_{j-1}}.
$$

Notice that $B_{r_j}(y_0)\subset B_{2r_j}(x_0)=B_{r_{j-1}}(x_0)$. Applying the lower bound in \eqref{eq:polynomial-moments} to $\widehat\pi$, and then using Minkowski's inequality, we obtain

\begin{align*}
\left(c_d\sum_{|\nu|\le d}r_j^{2|\nu|}|D^\nu \widehat\pi(y_0)|^2\right)^{1/2}
&\le\left(\fint_{B_{r_j}(y_0)}|\widehat\pi|^2\,d\mu\right)^{1/2}\\
&\le\mathcal E_d(y_0,r_j)+\left(\frac{\mu(B_{r_{j-1}}(x_0))}{\mu(B_{r_j}(y_0))}
\right)^{1/2}\mathcal E_d(x_0,r_{j-1}).
\end{align*}
By the excess estimate and the doubling inequality, since $B_{r_{j-1}}(x_0)=B_{2r_j}(x_0)\subset B_{4r_j}(y_0)$, we have
$$
\sum_{|\nu|\le d}r_j^{|\nu|}|D^\nu \widehat\pi(y_0)|\leq CKr_j^{d+\alpha}.
$$
Then, by $r_j<2h$, we have, for every $|\nu|\le d$,
\begin{equation}\label{eq:higher-cross-coeff}
\left|D^\nu\pi_{y_0,r_j}(y_0)-D^\nu\pi_{x_0,r_{j-1}}(y_0)\right|\le CKh^{d+\alpha-|\nu|}.
\end{equation}
We only need the case $\nu=0$ of this estimate. By the exact Taylor formula,
\begin{equation}\label{eq:T1}
 \pi_{x_0,r_{j-1}}(y_0)=\sum_{|\nu|\le d}\frac{D^\nu\pi_{x_0,r_{j-1}}(x_0)}{\nu!}(y_0-x_0)^\nu.
\end{equation}
For every $x_0\in B_1$, define the polynomial
\begin{equation}\label{eq:T2}
 \pi_{x_0}(x)=\sum_{|\nu|\le d}\frac{a_\nu(x_0)}{\nu!}(x-x_0)^\nu.
\end{equation}

To compare $w(y_0)$ with $\pi_{x_0}(y_0)$, add and subtract the two fits $\pi_{y_0,r_j},\pi_{x_0,r_{j-1}}$ at $y_0$ and use that $a_0(y_0)=w(y_0)$
\begin{eqnarray*}
|w(y_0)-\pi_{x_0}(y_0)|\le|a_0(y_0)-\pi_{y_0,r_j}(y_0)|+|\pi_{y_0,r_j}(y_0)-\pi_{x_0,r_{j-1}}(y_0)|+|\pi_{x_0,r_{j-1}}(y_0)-\pi_{x_0}(y_0)|.
\end{eqnarray*}
The first two terms are bounded by $CKr_j^{d+\alpha}$ and $CKh^{d+\alpha}$, respectively, by \eqref{eq:general-jet-error} and \eqref{eq:higher-cross-coeff}. For the last term, the two polynomials are expanded about the same point $x_0$. Thus the Taylor formulas \eqref{eq:T1} and \eqref{eq:T2} give
\begin{align*}
 |\pi_{x_0,r_{j-1}}(y_0)-\pi_{x_0}(y_0)|&\le \sum_{|\nu|\le d}\frac{|D^\nu\pi_{x_0,r_{j-1}}(x_0)-a_\nu(x_0)|}{\nu!}|(y_0-x_0)^\nu|\\
&\le CK\sum_{|\nu|\le d}r_{j-1}^{d+\alpha-|\nu|}h^{|\nu|}\le CKh^{d+\alpha}.
\end{align*}
Here we used $|(y_0-x_0)^\nu|\le h^{|\nu|}$, $r_{j-1}=2r_j$, $r_j<2h$, and $d+\alpha-|\nu|>0$. Combining these bounds yields
$$
 |w(y_0)-\pi_{x_0}(y_0)|\le CKh^{d+\alpha}.
$$

If $h>r_0/2$, the same estimate follows from \eqref{eq:general-jet-linfty} and $\|w\|_{L^\infty(B_1)}\le CK$, since $r_0$ is fixed and $h<2$. Consequently,
\begin{equation}\label{eq:general-Whitney}
 |w(y_0)-\pi_{x_0}(y_0)|\le CK|x_0-y_0|^{d+\alpha}\qquad\forall\,x_0,y_0\in B_1.
\end{equation}
By the polynomial characterization of H\"older spaces \cite[Appendix A, Remark A.1]{FerRos22}, \eqref{eq:general-Whitney} implies that $w\in C^{d,\alpha}(B_1)$,
with
$$
\|w\|_{C^{d,\alpha}(B_1)}\le C\bigl(\|w\|_{L^\infty(B_1)}+K\bigr)\le C(n,\lambda,\Lambda,L,D,d,\alpha)\mathcal N(u,v).
$$
Since $d=k$, this is \eqref{eq:higher-conclusion}.
\end{proof}

\appendix
\section{Regularity estimates and a polynomial Liouville theorem for
polyharmonic functions}\label{app:polyharmonic}
This appendix is devoted to a summary of some classical regularity results that are used throughout the paper. For the reader's convenience, we state these results explicitly and, in some cases, provide a proof. The statements are formulated in the precise form needed in our arguments and are not intended to be either general or optimal.

\begin{remark}\label{rem:second-order-schauder}
This remark summarizes the classical second-order Schauder estimates used in the main proofs of the paper:

\smallskip\noindent\textit{Interior estimate with variable coefficients.}
Let $0<r<R$, and suppose that $A$ is symmetric, uniformly elliptic with constants $0<\lambda\leq\Lambda$ in $B_R$, with
$\|A\|_{C^{0,\eta}(B_R)}\le L$ for some $0<\eta<1$. If
$u\in H^1(B_R)$ satisfies $L_Au=0$, then there exists a constant $C=C(n,\lambda,\Lambda,L,\eta,r,R)>0$ such that
\begin{equation}\label{eq:E1}
 \|u\|_{C^{1,\eta}(B_r)}\le C\|u\|_{L^2(B_R)}.
\end{equation}
Estimate \eqref{eq:E1} follows from the interior Schauder estimate for weak solutions, see \cite{GilTru01}. In particular, Lipschitz coefficients give this estimate for every $0<\eta<1$, with constants controlled by the ellipticity, the Lipschitz bound, $\eta$, $r$ and $R$.

\smallskip\noindent\textit{Boundary estimate for problems with frozen coefficients.}
Let $A_0$ be a constant symmetric uniformly elliptic matrix, with ellipticity constants $0<\lambda\leq\Lambda$. Let $R>0$, and let
$F\in C^{0,\eta}(B_R;\R^n)$ for some $0<\eta<1$.
The unique weak solution $u\in H^1_0(B_R)$ of
$$
 \begin{cases}
 -L_{A_0}u=\div F &\mathrm{in \ }B_R,\\
 u=0 &\mathrm{on \ }\partial B_R,
\end{cases}
$$
belongs to $C^{1,\eta}(B_R)$ and there exists a constant $C=C(n,\lambda,\Lambda,\eta,R)>0$ such that
\begin{equation}\label{eq:E2}
 \|u\|_{C^{1,\eta}(B_R)}\le C\|F\|_{C^{0,\eta}(B_R)}.
\end{equation}
Existence and uniqueness follow from Lax-Milgram. This is the classical global divergence-form Schauder estimate on a ball, see \cite{GilTru01}.
\end{remark}

The next result gives smoothness of polyharmonic functions, namely solutions to $(-\Delta)^{m+1}u=0$ for some $m\in\mathbb N$. It is a standard consequence of the hypoellipticity of higher order elliptic differential operators; see, for instance, \cite{Hor90}. For the reader's convenience, we include an elementary proof of the slightly more general statement with a smooth right-hand side $f$, the polyharmonic case corresponding to $f=0$.

\begin{Lemma}\label{lem:app-smoothness}
Let $m\in\mathbb N$, $R>0$ and let $f\in C^\infty(B_R)$. If $u\in\mathcal D'(B_R)$ satisfies
$$
(-\Delta)^{m+1}u=f\qquad\hbox{in }B_R
$$
in the sense of distributions, then $u$ is $C^\infty(B_R)$.
\end{Lemma}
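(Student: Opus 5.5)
The plan is to prove hypoellipticity of $(-\Delta)^{m+1}$ by convolving with a cut-off fundamental solution, and then bootstrap with interior Sobolev regularity. Fix any ball $B_\rho(x_0)$ with $\overline{B_\rho(x_0)}\subset B_R$. It suffices to show that $u\in C^\infty(B_{\rho/2}(x_0))$. Since $u$ is a distribution on a neighbourhood of $\overline{B_\rho(x_0)}$, it has finite order there. Multiplying by a cut-off $\chi\in C^\infty_c(B_\rho(x_0))$ with $\chi\equiv1$ on $B_{3\rho/4}(x_0)$, we get that $\chi u$ is a compactly supported distribution, hence belongs to $H^{-s}(\R^n)$ for some $s\ge0$.

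Next I write down the equation satisfied by $\chi u$. It reads
$$(-\Delta)^{m+1}(\chi u)=\chi f+[(-\Delta)^{m+1},\chi]u,$$
where the commutator is a differential operator of order at most $2m+1$. Its coefficients are supported in the annulus where $\nabla\chi\ne0$. The key step is the Fourier estimate: for $g\in H^{t}(\R^n)$ with compact support, any compactly supported solution $z$ of $(-\Delta)^{m+1}z=g$ satisfies $|\xi|^{2m+2}\hat z=\hat g$. Since $\hat z$ is smooth (Paley--Wiener), this gives $z\in H^{t+2m+2}_{\loc}$. Concretely, I would split $\hat z$ into $\{|\xi|\le1\}$, where $\hat z$ is bounded, and $\{|\xi|>1\}$, where $(1+|\xi|^2)^{(t+2m+2)/2}|\hat z|\le C(1+|\xi|^2)^{t/2}|\hat g|$.

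This gives an interior regularity gain: if $u\in H^{t}$ on a ball, then on a slightly smaller ball $u\in H^{t+1}$. Indeed, the commutator term lies in $H^{t-2m-1}$, so $\chi u\in H^{t+1}$. The gain is only one derivative, because the commutator costs $2m+1$ derivatives. Iterating on a sequence of nested balls with radii decreasing from $3\rho/4$ to $\rho/2$, using a new cut-off at each stage, improves $u$ from $H^{-s}$ to $H^{t}$ for every $t$ on $B_{\rho/2}(x_0)$. The Sobolev embedding then gives $C^\infty$. Each of the finitely many steps needed for a given $t$ shrinks the ball by a fixed fraction, say $2^{-i}\rho/8$ at step $i$, so the radii stay above $\rho/2$.

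The main obstacle is making the commutator bookkeeping honest, since the gain per step is only one derivative. An alternative that avoids iteration is to use the fundamental solution $\Phi$ of $(-\Delta)^{m+1}$, with $\Phi(x)=c|x|^{2m+2-n}$ or a log-modified version of it, which is smooth away from $0$. Writing $\chi u=\Phi*\big((-\Delta)^{m+1}(\chi u)\big)$, the commutator part is supported away from $B_{3\rho/4}(x_0)$. Convolving it with $\Phi$, which is smooth off the origin, therefore gives a smooth function on $B_{\rho/2}(x_0)$, and $\Phi*(\chi f)$ is smooth. I would present this second route as the main argument, and the Fourier bootstrap as the check that the convolution identity holds for compactly supported distributions.
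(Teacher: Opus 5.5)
Your proof is correct, but it takes a different route from the paper. You prove hypoellipticity of $(-\Delta)^{m+1}$ directly, either by the commutator/Sobolev bootstrap or by convolving $\chi u$ with a fundamental solution $\Phi$ that is smooth off the origin.

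The paper instead argues by induction on $m$ and uses only the second-order case. It sets $v=-\Delta u\in\mathcal D'(B_R)$, so that $(-\Delta)^m v=f$. The inductive hypothesis gives $v\in C^\infty(B_R)$, and then $-\Delta u=v$ with smooth right-hand side gives $u\in C^\infty(B_R)$ by the classical Weyl lemma. The base case $m=0$ is Weyl's lemma itself.

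\emph{What each route buys.} The paper's argument is shorter because it exploits the factorization $(-\Delta)^{m+1}=(-\Delta)\circ(-\Delta)^m$ and delegates all the analysis to the second-order Weyl lemma, which it takes for granted. Your argument is self-contained: it is essentially the proof of that Weyl lemma, run once for the full operator. It also works verbatim for any constant-coefficient elliptic operator, including $(-L_{A_0})^{m+1}$ directly, without factorization or a change of variables.

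\emph{A small correction to how you relate the two routes.} The identity
\[
\chi u=\Phi*\bigl((-\Delta)^{m+1}(\chi u)\bigr),\qquad \chi u\in\mathcal E'(\R^n),
\]
needs no Fourier check. It follows at once from $(-\Delta)^{m+1}\Phi=\delta$ and the fact that derivatives can be moved across a convolution with a compactly supported distribution. Your Sobolev bootstrap is therefore an independent complete proof, not a verification step for the convolution argument.
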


\begin{proof}
We argue by induction on $m\in\mathbb N$. If $m=0$, then
$$
-\Delta u=f
\qquad\hbox{in }B_R,
$$
in the sense of distributions, and the conclusion follows from the classical Weyl lemma for the Poisson equation. Now let $m\ge1$, and assume the statement has been proved with $m-1$ and consider the case $m$; that is,
$$
(-\Delta)^{m+1}u=f
\qquad\hbox{in }B_R.
$$
Set $v=-\Delta u$. Then $v\in\mathcal D'(B_R)$ and
$$
(-\Delta)^m v=f
\qquad\hbox{in }B_R.
$$
By the inductive hypothesis, $v\in C^\infty(B_R)$. Then
$$
-\Delta u=v
\qquad\hbox{in }B_R.
$$
Again, the Weyl lemma for the Poisson equation gives $u\in C^\infty(B_R)$.
\end{proof}

The next result provides an iterated Caccioppoli inequality for $L_{A_0}$-polyharmonic functions. This is classical, but we provide a proof.

\begin{Lemma}\label{lem:app-caccioppoli}
Let $m\in\mathbb N$, let $A_0$ be a constant and symmetric uniformly elliptic matrix, with ellipticity constants $0<\lambda\leq\Lambda$, and let $u\in L^2(B_R)$ satisfy
\begin{equation}\label{eq:app-equation}
 (-L_{A_0})^{m+1}u=0\qquad\hbox{in }B_R
\end{equation}
in distributions. For every $j\in\mathbb N\setminus\{0\}$ there exists a constant $C_j=C(n,m,j,\lambda,\Lambda)>0$ such that
\begin{equation}\label{eq:poly-Caccioppoli}
 \int_{B_{2^{-j}R}}|D^ju|^2 \le \frac{C_j}{R^{2j}}\int_{B_R}u^2.
\end{equation}
Here $D^ju$ stands for any derivative $D^\nu u$ with $|\nu|=j$.
\end{Lemma}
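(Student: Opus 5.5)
The plan is to prove \eqref{eq:poly-Caccioppoli} by iterating a one-step interior estimate. After rescaling we may take $R=1$. Lemma \ref{lem:app-smoothness}, applied after the linear change of variables $x=A_0^{1/2}y$ that turns $-L_{A_0}$ into $-\Delta$, shows that $u\in C^\infty(B_1)$. Since $A_0$ is constant, every derivative $D^\nu u$ again solves \eqref{eq:app-equation}. It therefore suffices to establish the case $j=1$ with a slightly more flexible radius:
\begin{equation*}
\int_{B_{\rho}}|\nabla u|^2\le \frac{C}{(s-\rho)^{2}}\int_{B_s}u^2,\qquad 0<\rho<s\le 1,
\end{equation*}
for every solution $u$ of \eqref{eq:app-equation} in $B_s$, with $C=C(n,m,\lambda,\Lambda)$. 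Granting this, we apply it to $D^{\nu'}u$ with $|\nu'|=i-1$ on the radii $2^{-(i-1)}\to 2^{-i}$, for $i=1,\dots,j$. Chaining the $j$ resulting estimates gives \eqref{eq:poly-Caccioppoli} with a constant depending on $j$. Rescaling back produces the factor $R^{-2j}$.

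For the one-step estimate, the case $m=0$ is the classical Caccioppoli inequality: test $-L_{A_0}u=0$ with $\varphi^2u$ and use ellipticity. For $m\ge1$, write $u_i=(-L_{A_0})^iu$ for $i=0,\dots,m$, so that $-L_{A_0}u_i=u_{i+1}$ and $-L_{A_0}u_m=0$. I would prove by induction on $m$ an interpolation-type bound
\begin{equation*}
\int_{B_\rho}|\nabla u|^2\le C\Big((s-\rho)^{-2}\int_{B_s}u^2\Big),
\end{equation*}
combined with the auxiliary estimate $\int_{B_\rho}|D^2u|^2\le C(s-\rho)^{-4}\int_{B_s}u^2$. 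The cleaner route is to argue via the hypoelliptic structure. On the unit scale, the polyharmonic functions satisfy $\|u\|_{H^{2m+2}(B_{1/2})}\le C\|u\|_{L^2(B_1)}$. One gets this by a standard mollified-cutoff argument: test the equation against $\varphi^{2}u$ with $\varphi$ a cutoff, integrate by parts $m+1$ times, and absorb the lower-order commutator terms via the interpolation inequality $\|D^ku\varphi\|\le\varepsilon\|D^{m+1}u\,\varphi\|+C_\varepsilon\|u\|$ on nested balls. A covering/scaling argument then yields the version with general radii $\rho<s$.

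The main obstacle is the absorption step for $m\ge1$. Integrating $(-L_{A_0})^{m+1}u\cdot \varphi^{2}u$ by parts produces $\int\varphi^2|(\text{order }m+1\text{ derivative})|^2$ plus cross terms involving intermediate derivatives of $u$ weighted by derivatives of $\varphi$. Controlling these requires a Gagliardo--Nirenberg type interpolation on balls. It also requires the usual iteration lemma for monotone functions $f(\rho)\le \tfrac12 f(s)+C(s-\rho)^{-a}\int_{B_1}u^2$ to remove the $\varepsilon$-terms. To avoid these technicalities, I would try the alternative of working in the transformed coordinates where the operator is $(-\Delta)^{m+1}$. There the Almansi representation $u=\sum_{i=0}^{m}|x-x_0|^{2i}h_i$ on smaller balls, with $h_i$ harmonic, lets one reduce to harmonic Caccioppoli estimates. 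The catch is that controlling $\|h_i\|_{L^2}$ by $\|u\|_{L^2}$ then needs its own compactness or finite-dimensionality argument. I would first attempt the direct cutoff iteration, and fall back on this reduction only if it proves unwieldy.
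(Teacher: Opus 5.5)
Your plan is correct and follows the paper's proof. The common steps are: smoothness of $u$ from Lemma~\ref{lem:app-smoothness} after the change $x=A_0^{1/2}y$, the fact that every derivative solves the same constant-coefficient equation, a one-step estimate $\int_{B_{r/2}}|\nabla u|^2\le Cr^{-2}\int_{B_r}u^2$, and iteration over the dyadic radii. The only difference is in that one-step estimate for $m\ge1$. The paper cites it directly from \cite[Lemma 3.1]{BarHofMay19}, applying it on $B_r$ with $r<R$ and then letting $r\to R^-$. You instead propose to reprove it by the standard cutoff, interpolation and absorption argument; this is legitimate but more laborious, and the paper avoids it by quoting the lemma.
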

\begin{proof}
By the linear change $x=A_0^{1/2}y$ and Lemma \ref{lem:app-smoothness}, applied locally in the transformed domain, $u$ is smooth in $B_R$. Notice that the operator $(-L_{A_0})^{m+1}$ is a constant-coefficient divergence-form elliptic operator of order $2(m+1)$: its symbol is $(A_0\xi\cdot\xi)^{m+1}$, bounded between $\lambda^{m+1}|\xi|^{2(m+1)}$ and $\Lambda^{m+1}|\xi|^{2(m+1)}$. For each $0<r<R$, smoothness gives $u\in H^{m+1}(B_r)$. We can therefore apply the classical second-order Caccioppoli inequality if $m=0$, and \cite[Lemma 3.1]{BarHofMay19} if $m\ge1$, on this strictly smaller ball $B_r$ to obtain the existence of a constant $C=C(n,m,\lambda,\Lambda)>0$ such that
$$
 \int_{B_{r/2}}|\nabla u|^2\le \frac{C}{r^2}\int_{B_r}u^2.
$$
Every derivative solves the same equation, since $A_0$ is constant. Then, an iteration on the radii $r,r/2,\ldots,r/2^{j-1}$ gives
$$
 \int_{B_{2^{-j}r}}|D^ju|^2\le C^j2^{j(j-1)}r^{-2j}\int_{B_r}u^2.
$$
Since the above constant does not depend on $r,R$, letting $r\to R^-$ proves \eqref{eq:poly-Caccioppoli}.
\end{proof}

The following is a direct consequence of Lemma \ref{lem:app-caccioppoli}.
\begin{remark}\label{rem:Hj}
Under the assumptions in Lemma \ref{lem:app-caccioppoli}, for every $j\in\mathbb N$ and $0<\rho<R$, there exists a constant $C=C(n,m,j,\lambda,\Lambda,\rho,R)>0$ such that
$$
 \|u\|_{H^j(B_\rho)}\le C\|u\|_{L^2(B_R)}.
$$
Indeed, apply \eqref{eq:poly-Caccioppoli} on translated balls of radius smaller than $R-\rho$ and cover $B_\rho$ by finitely many of their smaller concentric balls; do this for every derivative order at most $j$.
\end{remark}

The last result we present here is the classical polynomial Liouville theorem for entire polyharmonic functions. It states that any entire polyharmonic function with a polynomial $L^2$-growth rate bound is in fact a polynomial. The proof relies only on Lemma \ref{lem:app-caccioppoli}.

\begin{Theorem}\label{thm:app-liouville}
Let $m\in\mathbb N$, and let $u\in L^2_{\loc}(\R^n)$ satisfy
\[
 (-\Delta)^{m+1}u=0\qquad\hbox{in }\R^n
\]
in distributions. Suppose that there are constants $M>0$ and $\gamma\ge0$
such that
\begin{equation}\label{eq:app-polynomial-growth}
 \int_{B_R}u^2\le M R^{n+2\gamma}
 \qquad\hbox{for every }R\ge1.
\end{equation}
Then $u$ is a polynomial of degree at most $\lfloor\gamma\rfloor$.
\end{Theorem}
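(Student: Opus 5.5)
The plan is to show that every derivative of order $\lfloor\gamma\rfloor+1$ of $u$ vanishes identically, using only the iterated Caccioppoli inequality of Lemma \ref{lem:app-caccioppoli} with $A_0=\mathrm{Id}$. Since $u\in L^2_{\loc}(\R^n)$ solves $(-\Delta)^{m+1}u=0$ in distributions on every ball, Lemma \ref{lem:app-smoothness} first gives $u\in C^\infty(\R^n)$. This makes pointwise statements about derivatives meaningful.

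Fix $j=\lfloor\gamma\rfloor+1$, so that $j>\gamma$, and fix any multiindex $\nu$ with $|\nu|=j$. For $R\ge1$, apply \eqref{eq:poly-Caccioppoli} on $B_R$ and combine it with the growth assumption \eqref{eq:app-polynomial-growth}:
\begin{equation*}
\int_{B_{2^{-j}R}}|D^\nu u|^2\le \frac{C_j}{R^{2j}}\int_{B_R}u^2\le C_jM\,R^{n+2\gamma-2j}.
\end{equation*}
This is a bound on an $L^2$ integral, while the exponent still contains $n$, so I would convert it into a pointwise statement. The cleanest route is to observe that $D^\nu u$ is itself polyharmonic of the same order, because derivatives commute with $\Delta$. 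Hence the integral bound above is itself a growth estimate for $D^\nu u$, with $R$ replaced by $2^{-j}R$ and a constant depending on $j$.

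To pass from $L^2$ to pointwise control, I apply Remark \ref{rem:Hj} to $D^\nu u$ on balls $B_{r/2}(x)\subset B_r(x)$ with Sobolev order $s>n/2$, and then use the scaling of the Sobolev embedding. This yields the mean-value type bound
\begin{equation*}
|D^\nu u(x)|^2\le C r^{-n}\int_{B_r(x)}|D^\nu u|^2.
\end{equation*}
Scale invariance holds because the operator is homogeneous; alternatively, one rescales $u(x+r\,\cdot)$ to the unit ball, applies the Caccioppoli inequality there, and uses the embedding $H^s(B_{1/2})\hookrightarrow C^0$.

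Now fix $x$ and let $R\to\infty$ with $r=2^{-j}R/2$, so that $B_r(x)\subset B_{2^{-j}R}$ once $R$ is large. Combining the two estimates gives
\begin{equation*}
|D^\nu u(x)|^2\le C R^{-n}\cdot R^{n+2\gamma-2j}=C R^{2(\gamma-j)}\longrightarrow0,
\end{equation*}
since $j>\gamma$. Hence $D^\nu u\equiv0$ for every $|\nu|=\lfloor\gamma\rfloor+1$. A smooth function on $\R^n$ whose derivatives of order $\lfloor\gamma\rfloor+1$ all vanish is a polynomial of degree at most $\lfloor\gamma\rfloor$, by Taylor's formula with remainder. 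This proves the statement.

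The main obstacle is the scale-invariant pointwise bound in the third step. Remark \ref{rem:Hj} is stated with constants depending on $\rho$ and $R$, so I will prove it at unit scale and transfer it to radius $r$ by the dilation $y\mapsto x+ry$. This dilation preserves the equation $(-\Delta)^{m+1}u=0$, and the resulting constant is independent of $x$ and $r$ because of translation and dilation invariance.
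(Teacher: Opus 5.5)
Your proof is correct, but it obtains the degree bound by a different mechanism than the paper.

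The paper applies \eqref{eq:poly-Caccioppoli} with $j>\gamma+n/2$, so that $\int_K|D^ju|^2\le C_jMR^{n+2\gamma-2j}\to0$ on every compact $K$. This only shows that $u$ is a polynomial. The sharp bound $\deg u\le\lfloor\gamma\rfloor$ is then recovered separately by a blow-down argument: the top homogeneous part $u_N$ forces $\int_{B_R}u^2\ge cR^{n+2N}$ for large $R$, which is then compared with \eqref{eq:app-polynomial-growth}.

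You instead remove the volume factor $R^n$ at the source. You combine the Caccioppoli bound with the scale-invariant $L^\infty$--$L^2$ estimate $|D^\nu u(x)|^2\le Cr^{-n}\int_{B_r(x)}|D^\nu u|^2$. This estimate comes from Remark \ref{rem:Hj} at unit scale, Sobolev embedding with $s>n/2$, and the translation and dilation invariance of $(-\Delta)^{m+1}$. It lets you take $j=\lfloor\gamma\rfloor+1$ directly and get the optimal degree in one step. Your check that $B_r(x)\subset B_{2^{-j}R}$ for $r=2^{-j-1}R$ and $R$ large, giving the bound $CR^{2(\gamma-j)}\to0$, is exactly what is needed.

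The trade-off is this. Your route needs the extra pointwise ingredient, together with the care you describe to keep the constants uniform under rescaling. The paper's route uses only the integral Caccioppoli inequality plus an elementary lower bound on the $L^2$ growth of a nonzero polynomial.
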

\begin{proof}
From Lemma \ref{lem:app-smoothness}, $u$ is smooth.
For every integer $j\ge1$, Lemma \ref{lem:app-caccioppoli} with $A_0=\mathbb I$ gives
$$
 \int_{B_{2^{-j}R}}|D^ju|^2
 \le C_j R^{-2j}\int_{B_R}u^2
 \le C_j M R^{n+2\gamma-2j}.
$$
Choose $j>\gamma+n/2$. Let $K\subset\R^n$ be any compact set. For $R$ sufficiently large, $K\subset B_{2^{-j}R}$, and therefore, letting $R\to\infty$
$$
\int_{K}|D^ju|^2
 \le C_j M R^{n+2\gamma-2j}\to0.
$$
Since $K$ is arbitrary, all derivatives of order $j$ vanish identically in $\R^n$. Thus $u$ is a polynomial. The conclusion now is immediate when $u\equiv0$. If $u\not\equiv0$, let $N=\deg u$ and let $u_N$ be its highest
homogeneous part. Then
$$
 \frac{1}{R^{n+2N}}\int_{B_R}u^2
 =\int_{B_1}\bigl(R^{-N}u(Rx)\bigr)^2
 \longrightarrow\int_{B_1}u_N^2>0.
$$
Consequently, there exist $R_0\geq1$ and $c>0$ such that
$$
\int_{B_R}u^2\geq cR^{n+2N}\qquad\hbox{for every }R\geq R_0.
$$
Combining this estimate with \eqref{eq:app-polynomial-growth}, we obtain
$$
cR^{n+2N}\leq MR^{n+2\gamma}
\qquad\hbox{for every }R\geq R_0,
$$
which forces $N\leq\gamma$. Since $N$ is an integer, $N\leq\lfloor\gamma\rfloor$.
\end{proof}

\section*{Acknowledgment}
The author is a member of the GNAMPA research group of INdAM-Istituto Nazionale di Alta Matematica and partially supported by the INdAM-GNAMPA 2026 project "Struttura fine e regolarit\`a in problemi variazionali non-lineari" (CUP E53C25002010001).

\section*{AI disclosure}
ChatGPT assisted in developing some of the mathematical arguments and in drafting and revising parts of the manuscript. The author reviewed and verified the resulting material and takes full responsibility for the contents of the paper.


\begin{thebibliography}{99}

\bibitem{Anc78}
A. Ancona, \textsl{Principe de Harnack \`a la fronti\`ere et th\'eor\`eme de Fatou pour un op\'erateur elliptique dans un domaine lipschitzien}, Ann. Inst. Fourier (Grenoble) {\bf 28} (4), (1978), 169--213.

\bibitem{BanBasBur91}
R. Ba\~nuelos, R. Bass, K. Burdzy, \textsl{H\"older domains and the boundary Harnack principle}, Duke Math. J. {\bf 64} (1), (1991), 195--200.


\bibitem{BarHofMay19}
A. Barton, S. Hofmann, S. Mayboroda, \textsl{Dirichlet and Neumann boundary values of solutions to higher order elliptic equations},
Ann. Inst. Fourier (Grenoble) {\bf 69} (4), (2019), 1627--1678.


\bibitem{BasBur91}
R. Bass, K. Burdzy, \textsl{A boundary Harnack principle in twisted H\"older domains}, Ann. Math. (2) {\bf 134} (2), (1991), 253--276.

\bibitem{Cam63}
S. Campanato, \textsl{Propriet\`a di h\"olderianit\`a di alcune classi di funzioni}, Ann. Scuola Norm. Sup. Pisa Cl. Sci. (3) {\bf 17}, (1963), 175--188.

\bibitem{Cam64}
S. Campanato, \textsl{Propriet\`a di una famiglia di spazi funzionali}, Ann. Scuola Norm. Sup. Pisa Cl. Sci. (3) {\bf 18}, (1964), 137--160.


\bibitem{Dah77}
B. Dahlberg, \textsl{Estimates of harmonic measure}, Arch. Ration. Mech. Anal. {\bf 65} (3), (1977), 275--288.



\bibitem{DesSav15}
D. De Silva, O. Savin, \textsl{A note on higher regularity boundary Harnack inequality}, Discrete Contin. Dyn. Syst. {\bf 35} (12), (2015), 6155--6163.

\bibitem{DesSav16}
D. De Silva, O. Savin, \textsl{Boundary Harnack estimates in slit domains and applications to thin free boundary problems}, Rev. Mat. Iberoam. {\bf 32} (3), (2016), 891--912.

\bibitem{DesSav20}
D. De Silva, O. Savin, \textsl{A short proof of boundary Harnack principle}, J. Differential Equations {\bf 269}, (2020), 2419--2429.

\bibitem{DonJeoVit24}
H. Dong, S. Jeon and S. Vita, \textsl{Schauder type estimates for degenerate or singular elliptic equations with DMO coefficients}, Calc. Var. Partial Differential Equations {\bf 63} (9), (2024), article 239.


\bibitem{Eva86}
L.~C. Evans, \textsl{Quasiconvexity and partial regularity in the calculus of variations}, Arch. Ration. Mech. Anal. {\bf 95} (3), (1986), 227--252.


\bibitem{EvaGar87}
L.~C. Evans, R.~F. Gariepy, \textsl{Blowup, compactness and partial regularity in the calculus of variations}, Indiana Univ. Math. J. {\bf 36} (2), (1987), 361--371.


\bibitem{FerRos22}
X. Fern\'andez-Real, X. Ros-Oton, \textsl{Regularity Theory for Elliptic PDE}, Zurich Lectures in Advanced Mathematics, EMS Press, 2022.

\bibitem{Fio26}
G. Fioravanti, \textsl{A priori regularity estimates for ratio of solutions to elliptic equations with a product structure of two-dimensional nodal sets}, (2026), preprint arXiv:2605.23666.


\bibitem{GarLin86}
N. Garofalo, F.-H. Lin, \textsl{Monotonicity properties of variational integrals, $A_p$ weights and unique continuation}, Indiana Univ. Math. J. {\bf 35} (2), (1986), 245--268.


\bibitem{GiaNec79}
M. Giaquinta, J. Ne\v{c}as, \textsl{On the regularity of weak solutions to nonlinear elliptic systems via Liouville's type property}, Comment. Math. Univ. Carolin. {\bf 20} (1), (1979), 111--121.

\bibitem{GilTru01}
D. Gilbarg and N. S. Trudinger, \textsl{Elliptic Partial Differential Equations of Second Order}, second edition, Classics in Mathematics, Springer, 2001.

\bibitem{GiuMir68}
E. Giusti, M. Miranda, \textsl{Sulla regolarit\`a delle soluzioni deboli di una classe di sistemi ellittici quasi-lineari}, Arch. Ration. Mech. Anal. {\bf 31}, (1968), 173--184.

\bibitem{Hor90}
L. H\"ormander, \textsl{The Analysis of Linear Partial Differential Operators I: Distribution Theory and Fourier Analysis}, second edition,
Grundlehren der mathematischen Wissenschaften {\bf 256}, Springer-Verlag, Berlin, 1990.


\bibitem{JeoVit24}
S. Jeon, S. Vita, \textsl{Higher order boundary Harnack principles in Dini type domains}, J. Differential Equations {\bf 412}, (2024), 808--856.


\bibitem{JerKen82}
D.~S. Jerison, C.~E. Kenig, \textsl{Boundary behavior of harmonic functions in non-tangentially accessible domains}, Adv. Math. {\bf 46}, (1982), 80--147.

\bibitem{Kem72}
J. Kemper, \textsl{A boundary Harnack principle for Lipschitz domains and the principle of positive singularities}, Comm. Pure Appl. Math. {\bf 25}, (1972), 247--255.


\bibitem{LinLin22}
F.-H. Lin, Z. Lin, \textsl{Boundary Harnack principle on nodal domains}, Sci. China Math. {\bf 65} (12), (2022), 2441--2458.


\bibitem{LogMal15}
A. Logunov, E. Malinnikova, \textsl{On ratios of harmonic functions}, Adv. Math. {\bf 274}, (2015), 241--262.


\bibitem{LogMal16}
A. Logunov, E. Malinnikova, \textsl{Ratios of harmonic functions with the same zero set}, Geom. Funct. Anal. {\bf 26} (3), (2016), 909--925.




\bibitem{Man14}
D. Mangoubi, \textsl{A gradient estimate for harmonic functions sharing the same zeros}, Electron. Res. Announc. Math. Sci. {\bf 21}, (2014), 62--71.



\bibitem{Mur64}
B.~H. Murdoch, \textsl{A theorem on harmonic functions}, J. Lond. Math. Soc. {\bf 39}, (1964), 581--588.



\bibitem{Sim97}
L. Simon, \textsl{Schauder estimates by scaling}, Calc. Var. Partial Differential Equations {\bf 5} (5), (1997), 391--407.


\bibitem{TerTorVit24}
S. Terracini, G. Tortone and S. Vita, \textsl{Higher order boundary Harnack principle via degenerate equations}, Arch. Ration. Mech. Anal. {\bf 248} (2), (2024), article 29.

\bibitem{TerTorVit26a}
S. Terracini, G. Tortone and S. Vita, \textsl{A priori H\"older estimates for equations degenerating on nodal sets}, to appear in Trans. Amer. Math. Soc. (2026), preprint arXiv:2507.18991. doi:10.1090/tran/9759.

\bibitem{TerTorVit26b}
S. Terracini, G. Tortone and S. Vita, \textsl{A priori regularity estimates for equations degenerating on nodal sets}, Adv. Math. {\bf 492}, (2026), article 110885.


\end{thebibliography}
\end{document}